\newtheorem{thm}{Theorem}[section]
\newtheorem{cor}[thm]{Corollary}
\newtheorem{prop}[thm]{Proposition}
\newtheorem{lem}[thm]{Lemma}
\newtheorem{quest}[thm]{Question}
\theoremstyle{definition}
\newtheorem{defn}[thm]{Definition}
\newtheorem{exmp}[thm]{Example}
\newtheorem{fact}[thm]{Fact}
\newtheorem{condition}[thm]{Condition}
\theoremstyle{remark}
\newtheorem{rem}[thm]{Remark}
\newtheorem{claim}[thm]{Claim}
\let\c@equation\c@thm
\numberwithin{equation}{section}
\def\Ind{\setbox0=\hbox{$x$}\kern\wd0\hbox to 0pt{\hss$\mid$\hss} \lower.9\ht0\hbox to 0pt{\hss$\smile$\hss}\kern\wd0} 
\def\Notind{\setbox0=\hbox{$x$}\kern\wd0\hbox to 0pt{\mathchardef \nn=12854\hss$\nn$\kern1.4\wd0\hss}\hbox to 0pt{\hss$\mid$\hss}\lower.9\ht0 \hbox to 0pt{\hss$\smile$\hss}\kern\wd0} 
\def\ind{\mathop{\mathpalette\Ind{}}} 
\def\nind{\mathop{\mathpalette\Notind{}}}
\title{Criteria for exact saturation and singular compactness}
\author{Itay Kaplan, Nicholas Ramsey, and Saharon Shelah}
\date{\today}
\thanks{This work was supported by the European Research Council grant 338821. Paper no. 1192 in Shelah's  publication list. The first author would like to thank the Israel Science Foundation for its support of this research (grants no. 1533/14 and 1254/18)}
\begin{document}

\maketitle

\begin{abstract}
We introduce the class of \emph{unshreddable theories}, which contains the simple and NIP theories, and prove that such theories have exactly saturated models in singular cardinals, satisfying certain set-theoretic hypotheses.  We also give criteria for a theory to have singular compactness.  
\end{abstract}

\setcounter{tocdepth}{1}
\tableofcontents

\section{Introduction}

The construction of saturated models of a theory $T$ is sensitive to the combinatorial properties of sets definable in $T$.  Consequently, properties of saturated models and their constructions are often reflected in model-theoretic dividing lines, defined in terms of synactic properties of a formula.  For example, it is well known that a stable theory has a saturated model in every cardinal in which it is stable \cite[Theorem III.3.12]{shelah1990classification}. In a similar vein, the third-named author characterized the simple theories in terms of the \emph{saturation spectrum} of a theory, namely, the set of cardinal pairs $(\lambda, \kappa)$ with $\lambda \geq \kappa$ and every model of size $\lambda$ extends to a $\kappa$-saturated model of the same size \cite[Theorem 4.10]{shelah1980simple}.  Subsequent work on transferring saturation, Keisler's order, and the interpretability order all suggest that comparisons between saturated models and their constructions yield meaningful measures of model-theoretic complexity \cite{baldwin1999transfering, dvzamonja2004maximality, MR3666452}.  

A theory $T$ is said to have \emph{exact saturation} at the cardinal $\kappa$ if there is a $\kappa$-saturated model of $T$ which is not $\kappa^{+}$-saturated.  If $\kappa$ is regular and $>|T|$, \emph{every} theory has models with exact saturation at $\kappa$ \cite[Theorem 2.4, Fact 2.5]{20-Kaplan2015}, but for singular $\kappa$, this property connects with notions from classification theory.  The simplest example of a theory without exact saturation at singular $\kappa$ is the theory of dense linear orders.  Given a singular cardinal $\kappa$ and a $\kappa$-saturated dense linear order $I$ and given any subsets $A < B$ from $I$ with $|A| = |B| = \kappa$, there are cofinal and coinitial subsets $A_{0}$ and $B_{0}$ of $A$ and $B$ respectively with $|A_{0}| = |B_{0}|  < \kappa$.  It follows from the $\kappa$-saturation of $I$ that there is some $c \in I$ with $a < c < b$ for all $a \in A_{0}$ and $b \in B_{0}$, hence for all $a \in A$ and $b \in B$.  By quantifier elimination for the theory of dense linear orders, it follows that $I$ is $\kappa^{+}$-saturated.  This example suggests that failures of exact saturation are related to the presence of orders.  Indeed, it was shown in \cite[Theorem 4.10]{20-Kaplan2015} that an NIP theory $T$ has exact saturation at a singular cardinal $\kappa$ if and only if $T$ is not distal (assuming $2^{\kappa} = \kappa^{+}$ and $\kappa > |T|$).  

Additionally, \cite[Theorem 3.3]{20-Kaplan2015} showed that if $T$ is simple then $T$ has exactly $\mu$-saturated models for singular $\mu$ of cofinality greater than $|T|$ (again assuming $2^{\mu} = \mu^{+}$ and, additionally, $\square_{\mu}$).  In the unstable case, this argument started from a witness $\varphi(x;y)$ to the independence property along an indiscernible sequence $I$ of length $\kappa$ and inductively constructed a model $M$ containing $I$ so that every type over fewer than $\mu$ parameters is realized and also so that, for every tuple $c$ from $M$, there is an interval from the indiscernible sequence that is indiscernible over $c$.  This ensures that the model is both $\mu$-saturated yet omits the type $\{\varphi(x;a_{i})^{i \text{ even}} : i \in I\}$.  Simplicity theory, via the independence theorem and the forking calculus, played an important role in that argument.  

Here, we are interested in both finding criteria for exact saturation in broader model-theoretic contexts but also understanding the reach of the argument of \cite{20-Kaplan2015}, which was tailored to simple theories.  We introduce \emph{shredding}, a notion that refines forking and exactly captures the obstacle to ensuring that one can realize a formula such that a large interval of a given indiscernible sequence is additionally indiscernible over the realization.  This notion is defined with exact saturation in mind, but it appears to be a fairly fundamental notion and may have uses beyond the context explored here. We use shredding to define the class of \emph{unshreddable theories}, which are roughly the theories with a bound on the number of times a type can shred, and observe that both NIP and simple theories are unshreddable.  Our main theorem is that one may construct exactly saturated models of unshreddable theories with the independence property for singular cardinals satisfying certain set-theoretic hypotheses.  We follow the rough outline of the argument of \cite{20-Kaplan2015} but, in contrast to the approach taken there, which faced considerable technical issues in adapting the tools of simplicity theory for the construction of an exactly saturated model, our proof, in addition to being more general, is considerably simpler and more direct.  

In section 4, we focus on the way that the class of unshreddable theories compares to other classes from classification theory.  We show that there is an unshreddable theory with SOP$_{3}$, which suggests that the class of unshreddable theories is substantially broader than the simple theories.  However, we show subsequently that neither NSOP$_{1}$ nor NTP$_{2}$ imply that a theory is unshreddable.   

In section 5, we consider the dual problem of which conditions on a theory imply the inability to construct exactly saturated models, which we call \emph{singular compactness}.  We formulate one such criterion and show that this condition entails a considerable amount of complexity:  theories that meet our condition for every formula has TP$_{2}$ and SOP$_{n}$ for all $n$.  Nonetheless, we show that our condition restricted to a fixed finite set of formulas implies a local version of singular compactness. For this local variant, we show that there is an example which satisfies the condition for a fixed finite set of formulas which is NSOP$_{4}$.     

\section{Shredding}

\subsection{Basic definitions}

From now on, $T$ will denote a complete first-order theory with monster model $\mathbb{M}$.  Our model-theoretic notation and terminology is standard.  Following standard model-theoretic usage, we say the $A$-indiscernible sequence $I$ is \emph{extracted} from $J$ if $I$ realizes the EM-type of $I$ over $A$.  The existence of such a sequence follows by Ramsey and compactness.  In this subsection, we will describe shredding and show that it can be given a finitary characterization.  

\begin{defn}
Let $A$ be a set of parameters and $\lambda$ an infinite cardinal.
\begin{enumerate}
\item We say that $\varphi(x;a)$ $\lambda$-\emph{shreds} over $A$ when there is $\overline{b}$ such that:
\begin{enumerate}
\item $\overline{b} = \langle b_{\alpha} : \alpha < \lambda \rangle$ is an indiscernible sequence over $A$.
\item For no $\alpha < \lambda$ and $c \in \varphi(\mathbb{M},a)$ is $\overline{b}_{\geq \alpha}$ an indiscernible sequence over $Ac$.  
\end{enumerate}
\item We say a type $\lambda$-shreds over $A$ if it implies a formula that $\lambda$-shreds over $A$, respectively.  
\item We say $p \in S(B)$ $\lambda$\emph{-shreds} over $A$ with \emph{a built-in witness} if $A \subseteq B$ and an indiscernible sequence witnessing $\lambda$-shredding is contained in $B$.
\item For the above notions, we may omit $\lambda$ when $\lambda = (|T|+|A|)^{+}$.  
\item  %\item $\kappa'_{\lambda, 8}(T) = \min \{ \kappa \leq \infty : \text{ if } \kappa < \infty \text{ then there is no increasing sequence } \langle A_{i} : i \leq \kappa \rangle \text{ and } p \in S(A_{\kappa}) \text{ so that } p \upharpoonright A_{i+1} \text{ } (\lambda, 8)\text{-divides over }A_{i} \}$.
We define $\kappa^{m}_{\mathrm{shred}}(T)$ to be the minimal regular cardinal $\kappa$ such that there is no increasing continuous sequence of models $\langle M_{i} : i \leq \kappa \rangle$ and $p \in S^{m}(M_{\kappa})$ so that $p \upharpoonright M_{i+1}$ shreds over $M_{i}$ with a built-in witness, if such a cardinal exists (where \emph{continuous} means $M_{\delta} = \bigcup_{i < \delta} M_{i}$ for limit $\delta$).  Otherwise, we set $\kappa^{m}_{\mathrm{shred}}(T) = \infty$.    The cardinal $\kappa_{\mathrm{shred}}(T) = \sup_{m} \kappa^{m}_{\mathrm{shred}}(T)$.  
\item We say $T$ is \emph{unshreddable} if $\kappa_{\mathrm{shred}}(T) < \infty$.   
\end{enumerate}
\end{defn}

\begin{rem}
Though we do not use it, it is natural to additionally introduce an associated notion of forking:  say $\varphi(x;a)$ $\lambda$\emph{-shred-forks} over $A$ if $\varphi(x;a) \vdash \bigvee_{i < k} \psi_{i}(x;a_{i})$ where each $\psi_{i}(x;a_{i})$ $\lambda$-shreds over $A$.  This satisfies extension, by the same argument as for forking.  Note that, if $\varphi(x;a)$ $\lambda$-shreds over $A$, then, unless $\varphi(x;a)$ is inconsistent, we know $a$ is not contained in $A$.  
\end{rem}

The following lemma gives a finitary equivalent to $\lambda$-shredding.  

\begin{lem} \label{finitization}
Assume $\lambda = \text{cf}(\lambda) > |T| + |A|$.  The following are equivalent:
\begin{enumerate}
\item The formula $\varphi(x;a)$ $\lambda$-shreds over $A$.
\item There are $n$, $\overline{b}$, $\overline{\eta}$, and $\overline{\psi}$ satisfying:
\begin{enumerate}
\item $\overline{b} = \langle b_{\alpha} : \alpha < \lambda \rangle$ is an $A$-indiscernible sequence.
\item $\overline{\eta} = \langle \eta_{i} : i < k \rangle$ is a finite sequence of increasing functions in ${^{n}}(2n)$.
\item $\overline{\psi} = \langle \psi_{l}(x;y_{0},\ldots, y_{n-1};a'_{l}) : l < k \rangle$ is a sequence of formulas with $a'_{l} \in A$.
\item For every $\delta < \lambda$ divisible by $2n$ (or just for every limit $\delta < \lambda$), we have 
$$
\varphi(x;a) \vdash \bigvee_{l < k} \left[ \psi_{l}(x;b_{\delta}, \ldots, b_{\delta+n-1},a'_{l}) \leftrightarrow \neg \psi_{l}(x;b_{\delta + \eta_{l}(0)}, \ldots, b_{\delta + \eta_{l}(n-1)},a'_{l}) \right].
$$
\end{enumerate}
\end{enumerate}
\end{lem}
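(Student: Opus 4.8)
I would prove $(2)\Rightarrow(1)$ directly, and $(1)\Rightarrow(2)$ by a normalization step followed by a compactness argument; I expect the delicate point to be the uniformity in $\delta$.

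\smallskip
\noindent\emph{The direction $(2)\Rightarrow(1)$.} I claim the very sequence $\overline b$ supplied by $(2)$ witnesses $\lambda$-shredding. Fix $\alpha<\lambda$ and $c\in\varphi(\mathbb M,a)$. Since $\lambda=\mathrm{cf}(\lambda)>|T|+|A|\geq\aleph_0$ is an uncountable regular cardinal, there is a limit ordinal $\delta$ (divisible by $2n$, if one wishes) with $\alpha\leq\delta<\lambda$. Applying clause (d) at $\delta$ and using $c\models\varphi(x;a)$, there is $l<k$ for which $\psi_l(c;b_\delta,\ldots,b_{\delta+n-1},a'_l)$ and $\psi_l(c;b_{\delta+\eta_l(0)},\ldots,b_{\delta+\eta_l(n-1)},a'_l)$ have opposite truth values. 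As $\eta_l$ is increasing into $2n$, both $(b_{\delta+i})_{i<n}$ and $(b_{\delta+\eta_l(i)})_{i<n}$ are increasing $n$-tuples from the index set $[\delta,\delta+2n)\subseteq[\alpha,\lambda)$, and they are separated by the $Ac$-formula $\psi_l(\,\cdot\,;c,a'_l)$. Hence $\overline b_{\geq\alpha}$ is not indiscernible over $Ac$; as $\alpha,c$ were arbitrary, $\varphi(x;a)$ $\lambda$-shreds over $A$.

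\smallskip
\noindent\emph{The direction $(1)\Rightarrow(2)$; first ingredient.} Let $\overline b$ witness $\lambda$-shredding over $A$; I look for a finite family of formulas and shifts as in (d), allowing myself to pass to another $A$-indiscernible sequence of length $\lambda$. The key normalization is that non-indiscernibility over $Ac$ can always be witnessed in the shape occurring in (d) with $\eta(j)=n+j$: given $\psi(x;\bar y;\bar z)$, $a'\in A$, and increasing tuples $\bar u,\bar v$ from $[\alpha,\lambda)$ of a common length $n$ with $\psi(c;b_{\bar u};a')\wedge\neg\psi(c;b_{\bar v};a')$, choose an increasing $n$-tuple $\bar w$ from $[\alpha,\lambda)$ all of whose entries exceed every entry of $\bar u$ and of $\bar v$; comparing the value of $\psi(c;\,\cdot\,;a')$ on $b_{\bar u}$, $b_{\bar w}$, $b_{\bar v}$, one of $(\bar u,\bar w)$, $(\bar w,\bar v)$ is an increasing $n$-tuple immediately followed by an increasing $n$-tuple on whose two halves $\psi(c;\,\cdot\,;a')$ disagrees. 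By $A$-indiscernibility of $\overline b$ this $2n$-configuration transports so that the two halves become $(b_\delta,\ldots,b_{\delta+n-1})$ and $(b_{\delta+n},\ldots,b_{\delta+2n-1})$ for any chosen limit $\delta$ (at the cost of moving $c$ to the conjugate realization of $\varphi$ over the conjugate of $a$, fixing $A$). Conversely, an elementary combinatorial fact — chaining a standard block against $\eta$-blocks of growing length, using only the freedom $\eta\colon n\to 2n$ — shows that a realization distinguishing \emph{no} block at a limit $\delta$ makes $\overline b\restriction[\delta,\delta+\omega)$ indiscernible over $A$ together with that realization.

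\smallskip
\noindent\emph{Second ingredient, and the main obstacle.} It remains to collapse the a priori infinite disjunction over all pairs $(\psi,a')$ and all shifts to a finite one working at every $\delta$ at once. Here $\lambda=\mathrm{cf}(\lambda)>|T|+|A|$ enters: there are only $|T|+|A|<\lambda$ many finite families of such data, against $\lambda$ many candidate positions $\delta$. Granting the reduction to a single base $\delta_0$, one argues by contradiction: if no finite family satisfies (d), then the partial type over $A\cup\{a\}\cup\{b_{\delta_0+i}:i<\omega\}$ consisting of $\varphi(x;a)$ together with the negations of all instances of the $\leftrightarrow$-clause in (d) at $\delta_0$ is finitely satisfiable, so by compactness it is realized by some $c^\ast\models\varphi(x;a)$ distinguishing no block at $\delta_0$; by the combinatorial fact above (run at every limit $\delta_0$) this makes a final segment of $\overline b$ indiscernible over $Ac^\ast$, contradicting $\lambda$-shredding. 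The step I expect to be genuinely delicate is exactly the reduction to a single $\delta_0$: unlike $\varphi(x;a)$, the sequence $\overline b$ is only assumed indiscernible over $A$, not over $Aa$, so the truth value of the entailment in (d) could in principle vary with $\delta$. Overcoming this — by a pigeonhole/cofinality selection of a sub-configuration of $\overline b$ along which the types of blocks over $Aa$ stabilize, while retaining enough of the shredding to run the contradiction, or by building the required conjugates of $a$ into the statement — is the technical heart of the argument, and is precisely what forces the assumption $\lambda=\mathrm{cf}(\lambda)>|T|+|A|$.
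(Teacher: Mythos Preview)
Your $(2)\Rightarrow(1)$ is fine and matches the paper.

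For $(1)\Rightarrow(2)$, you have inverted the order of the two main ingredients, and this is where the argument goes wrong. You propose to run compactness at a \emph{single} base $\delta_0$: the partial type $\Gamma_{\delta_0}$ you describe consists of $\varphi(x;a)$ together with the negations of all block-disagreement clauses at $\delta_0$. But a realization $c^\ast$ of this type only tells you that $c^\ast$ does not distinguish increasing tuples drawn from the \emph{finite} window $[\delta_0,\delta_0+2n)$ (for each $n,\psi,\eta$); it says nothing about tuples that reach arbitrarily far into $[\delta_0,\lambda)$. So your ``combinatorial fact'' does not yield that an entire tail $\overline b_{\geq\delta_0}$ is $Ac^\ast$-indiscernible, and there is no contradiction with $\lambda$-shredding. (Your chaining argument at best shows $(b_{\delta_0+i})_{i<\omega}$ is $Ac^\ast$-indiscernible, which is not what shredding forbids.) You correctly sense that something is missing and flag the uniformity-in-$\delta$ issue, but the fix you sketch --- stabilize types of blocks over $Aa$, or ``build conjugates of $a$ into the statement'' --- is not what is needed.

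The paper's route is cleaner: run compactness at \emph{every} $\delta$. For each $\delta$, the full type $\Gamma_\delta$ expressing ``$x\models\varphi(x;a)$ and $\overline b_{\geq\delta}$ is $Ax$-indiscernible'' (allowing arbitrary increasing $\nu\in{}^m\lambda$, not just $\eta\in{}^n(2n)$) is inconsistent by definition of shredding; compactness then yields, for each $\delta$, a local finite witness $(\overline\chi^\delta,\overline\nu_\delta,m_\delta,k_\delta)$. Now there are only $|T|+|A|<\lambda$ many possible shapes for such finite data, so pigeonhole on the $\lambda$-many $\delta$'s gives a common shape on an unbounded set $X\subseteq\lambda$. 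Reindexing along $X$ produces the sequence in $(2)$, with the restriction $\eta_l\in{}^n(2n)$ emerging \emph{a posteriori} from the bounded support $u_\delta$ of each local witness --- not from any prior normalization. Your ``first ingredient'' (reducing to $\eta(j)=n+j$ by comparing against a far block $\overline w$) is thus an unnecessary detour: the paper never normalizes the shape of $\eta$ in advance.
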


\begin{proof}
(2)$\implies$(1) is clear by definition of $\lambda$-shredding.  

(1)$\implies$(2).  Suppose $\varphi(x;a)$ $\lambda$-shreds over $A$ witnessed by the indiscernible sequence $\overline{b} = \langle b_{\alpha} : \alpha < \lambda \rangle$.  Then for each $\delta < \lambda$ consider the set of formulas $\Gamma_{\delta}(x)$ containing $\varphi(x;a)$ and every formula of the form  
$$
 \chi(x;b_{\delta}, \ldots, b_{\delta + m-1}) \leftrightarrow \chi(x;b_{\delta + \nu(0)}, \ldots, b_{\delta + \nu(m-1)}) 
$$
for every $m < \omega, \chi \in L(A)$, and increasing function $\nu \in {^{m}\lambda}$.  Note that if $c \models \Gamma_{\delta}(x)$, then $b_{\geq \delta}$ is $Ac$-indiscernible so $\Gamma_{\delta}(x)$ is inconsistent for all $\delta < \lambda$ by the definition of $\lambda$-shredding.  It follows by compactness that, for each $\delta < \lambda$, there is a finite sequence $\overline{\chi}^{\delta} = \langle \chi^{\delta}_{l}(x;y_{\delta}) : l < k_{\delta} \rangle$ with each $\chi^{\delta}_{l}(x;y_{\delta}) \in L(A)$, and (after adding dummy variables to ensure all formulas in $\overline{\chi}$ have the same parameter variables) there are $m_{\delta} < \omega$ and a sequence of increasing functions $\overline{\nu}_{\delta} = \langle \nu_{\delta,l} : l < k_{\delta} \rangle$ from ${^{m_{\delta}}\lambda}$ such that 
$$
\varphi(x;a) \vdash \bigvee_{l < k_{\delta}} \chi^{\delta}_{l}(x;b_{\delta}, \ldots, b_{\delta + m_{\delta}-1}) \leftrightarrow \neg \chi^{\delta}_{l}(x;b_{\delta + \nu_{\delta,l}(0)}, \ldots, b_{\delta + \nu_{\delta,l}(m_{\delta}-1)}).
$$
Let $u_{\delta} = \{i : i < m_{\delta}\} \cup \{\nu_{\delta,l}(i) : i < m_{\delta}, l <k_{\delta}\}$.  Let $n_{\delta}$ be the least natural number such that $|u_{\delta}| < n_{\delta}$.  

By the pigeonhole principle and the regularity of $\lambda$, there is a subset of limit ordinals $X \subseteq \lambda$ of size $\lambda$, $n,m < \omega$ and $\overline{\chi} = \langle \chi_{l} : l < k \rangle$ so that $\delta \in X$ implies $n_{\delta} = n$, $k_{\delta} = k$, $m_{\delta} = m$, and $\overline{\chi}^{\delta} = \overline{\chi}$.  Further refining $X$, we may assume $\delta < \delta'$ from $X$ implies $\delta + i < \delta'$ for all $i \in u_{\delta}$.  Let $Y = \{\delta + i : \delta \in X, i \in u_{\delta}\} \subseteq \lambda$.  Let $\langle \alpha_{i} : i < \lambda \rangle$ be an increasing enumeration of a subset of $\lambda$ containing $Y$ so that $\langle \alpha_{(2n) \cdot i} : i < \lambda \rangle$ enumerates $X$ (which is possible by the choice of $n$).  Then if $\delta = \alpha_{(2n)\cdot j} \in X$, we can find for each $l < k$ an increasing function $\eta_{\delta,l} \in {^{n}(2n)}$ so that
$$
\delta + \nu_{\delta,l}(i) = \alpha_{(2n)\cdot j+\eta_{\delta,l}(i)},
$$
for all $i < m$ (we do not place any constraints on $\eta_{\delta,l}(i)$ for $m \leq i < n$ other than the requirement that $\eta_{\delta, l}$ is an increasing function\textemdash note that $\alpha_{(2n)\cdot j + \eta_{\delta, l}(i)} < \alpha_{(2n)(j+1)}$, which is the next ordinal in $X$ after $\delta$).   Write $\overline{\eta}_{\delta}$ for this sequence of functions.  By one last application of the pigeonhole principle, we can find $X' \subseteq X$ of size $\lambda$ and $\overline{\eta}$ so that $\delta \in X'$ implies $\overline{\eta} = \overline{\eta_{\delta}}$ and let $\langle \alpha'_{i} : i < \lambda \rangle$ be an increasing enumeration of $\{\alpha_{i+j} : \alpha_{i} \in X',j < 2n\}$. Write $\overline{b}' = \langle b'_{i} : i < \lambda \rangle$ for the subsequence of $\overline{b}$ defined by $b'_{i} = b_{\alpha'_{i}}$.  Note that if $\delta$ is divisible by $2n$, then $\alpha'_{\delta} \in X'$.  

Unravelling definitions, we see that
$$
\varphi(x;a) \vdash \bigvee_{l < k} \chi_{l}(x;b'_{\delta},\ldots, b'_{\delta + m-1}) \leftrightarrow \neg \chi_{l}(x;b'_{\delta + \eta_{l}(0)},\ldots, b'_{\delta + \eta_{l}(m-1)}),
$$   
for all $\delta < \lambda$ divisible by $2n$.  Because $m < n$, by adding dummy variables to each $\chi_{l}$, we obtain formulas $\psi_{l}$ so that 
$$
\varphi(x;a) \vdash \bigvee_{l < k} \psi_{l}(x;b'_{\delta},\ldots, b'_{\delta + n-1}) \leftrightarrow \neg \psi_{l}(x;b'_{\delta + \eta_{l}(0)},\ldots, b'_{\delta + \eta_{l}(n-1)}),
$$
as desired.  
\end{proof}

\begin{rem}
The proof shows, in fact, that any sequence witnessing that $\varphi(x;a)$ $\lambda$-shreds over $A$ gives rise to a sequence $\overline{b}$ as in (2) by restricting to a subsequence.  
\end{rem}

\begin{cor} \label{indiscernible witness}
Assume $\lambda = \text{cf}(\lambda) > |T| + |A|$ and $\varphi(x;a)$ $\lambda$-shreds over $A$.  Then there is an $A$-indiscernible sequence $\langle b_{\alpha} : \alpha < \lambda \rangle$ and $m < \omega$ so that 
\begin{itemize}
\item $\langle (b_{m \cdot \alpha}, b_{m \cdot \alpha + 1}, \ldots, b_{m \cdot \alpha + m-1}) : \alpha < \lambda \rangle$ is $Aa$-indiscernible.  
\item $\langle b_{\alpha} : \alpha < \lambda \rangle$ witnesses that $\varphi(x;a)$ $\lambda$-shreds over $A$ and, additionally, for every $c \in \varphi(\mathbb{M},a)$ and $\alpha < \lambda$, the finite sequence $(b_{m \cdot \alpha}, b_{m \cdot \alpha + 1}, \ldots, b_{m \cdot \alpha + m-1})$ is not $Ac$-indiscernible.  
\end{itemize}
\end{cor}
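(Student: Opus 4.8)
The plan is to feed $\varphi(x;a)$ into Lemma~\ref{finitization} and then regroup the witnessing indiscernible sequence into consecutive blocks of length $2n$, extract those blocks to an $Aa$-indiscernible sequence, and unpack back to singletons. First, apply Lemma~\ref{finitization} to obtain $n<\omega$, an $A$-indiscernible sequence $\overline b^{0}=\langle b^{0}_{\alpha}:\alpha<\lambda\rangle$, increasing functions $\eta_{0},\ldots,\eta_{k-1}\in{}^{n}(2n)$, and formulas $\psi_{l}(x;y_{0},\ldots,y_{n-1};a'_{l})$ with $a'_{l}\in A$ as in clause~(2). Set $m=2n$ and, for $\alpha<\lambda$, $B^{0}_{\alpha}=(b^{0}_{2n\alpha},\ldots,b^{0}_{2n\alpha+2n-1})$; since $2n\cdot\lambda=\lambda$ these blocks partition $\overline b^{0}$, and $\langle B^{0}_{\alpha}:\alpha<\lambda\rangle$ is $A$-indiscernible as a sequence of $2n$-tuples (an increasing tuple of blocks unpacks to an increasing tuple of singletons from $\overline b^{0}$). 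Let $\Psi(x;y_{0},\ldots,y_{2n-1})$ be the $L(A)$-formula $\bigvee_{l<k}[\psi_{l}(x;y_{0},\ldots,y_{n-1};a'_{l})\leftrightarrow\neg\psi_{l}(x;y_{\eta_{l}(0)},\ldots,y_{\eta_{l}(n-1)};a'_{l})]$. Applying clause~(2)(d) to $\delta=2n\alpha$ (which is divisible by $2n$) gives $\varphi(x;a)\vdash\Psi(x;B^{0}_{\alpha})$ for all $\alpha<\lambda$; equivalently the $L(Aa)$-formula $\theta(\overline y):=\forall x\,[\varphi(x;a)\rightarrow\Psi(x;\overline y)]$ holds of every $B^{0}_{\alpha}$.

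Next, by Ramsey and compactness extract an $Aa$-indiscernible sequence $\langle B_{\alpha}:\alpha<\lambda\rangle$ of $2n$-tuples from $\langle B^{0}_{\alpha}:\alpha<\lambda\rangle$ over $Aa$, and unpack it by setting $b_{2n\alpha+i}=(B_{\alpha})_{i}$ for $i<2n$, so that $B_{\alpha}=(b_{2n\alpha},\ldots,b_{2n\alpha+2n-1})$. The $Aa$-indiscernibility of $\langle B_{\alpha}\rangle$ is the first bullet. For the $A$-indiscernibility of $\langle b_{\beta}:\beta<\lambda\rangle$: extraction over $Aa$ is in particular extraction over $A$, so together with the $A$-indiscernibility of $\langle B^{0}_{\alpha}\rangle$ one gets $\mathrm{tp}(B_{\alpha_{0}}\ldots B_{\alpha_{r-1}}/A)=\mathrm{tp}(B^{0}_{0}\ldots B^{0}_{r-1}/A)$ for every $r$ and all increasing $\alpha_{0}<\ldots<\alpha_{r-1}$; an arbitrary increasing tuple $b_{\gamma_{0}},\ldots,b_{\gamma_{s-1}}$ is, after grouping the $\gamma_{j}$ by the block they lie in, an increasing subtuple in a fixed coordinate pattern of some $B_{\alpha_{0}}\ldots B_{\alpha_{r-1}}$, and the matching subtuple of $B^{0}_{0}\ldots B^{0}_{r-1}$ is of the form $b^{0}_{\delta_{0}},\ldots,b^{0}_{\delta_{s-1}}$ with $\delta_{0}<\ldots<\delta_{s-1}$, whose $A$-type depends only on $s$ since $\overline b^{0}$ is $A$-indiscernible; hence so does the $A$-type of $b_{\gamma_{0}}\ldots b_{\gamma_{s-1}}$.

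Finally, transfer $\theta$: if $\theta(B_{0})$ failed, then local basedness of $\langle B_{\alpha}\rangle$ on $\langle B^{0}_{\alpha}\rangle$ over $Aa$, applied to the $L(Aa)$-formula $\neg\theta$ and the $1$-tuple $B_{0}$, would give some $B^{0}_{\beta}\models\neg\theta$, contradicting the first paragraph; so, using $Aa$-indiscernibility, $\varphi(x;a)\vdash\Psi(x;B_{\alpha})$ for all $\alpha<\lambda$. Then for any $c\in\varphi(\mathbb{M},a)$ there is $l<k$ for which $\psi_{l}(c;(B_{\alpha})_{0},\ldots,(B_{\alpha})_{n-1};a'_{l})$ and $\psi_{l}(c;(B_{\alpha})_{\eta_{l}(0)},\ldots,(B_{\alpha})_{\eta_{l}(n-1)};a'_{l})$ have opposite truth values; since $(0,\ldots,n-1)$ and $(\eta_{l}(0),\ldots,\eta_{l}(n-1))$ are both increasing $n$-tuples of indices below $2n$ and $a'_{l}\in A$, the finite sequence $(b_{2n\alpha},\ldots,b_{2n\alpha+2n-1})$ is not $Ac$-indiscernible, which is the second half of the second bullet. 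And every tail $\langle b_{\beta}:\beta\geq\alpha\rangle$ contains the block $(b_{2n\alpha'},\ldots,b_{2n\alpha'+2n-1})$ for any $\alpha'\geq\alpha$, hence is not $Ac$-indiscernible for $c\in\varphi(\mathbb{M},a)$; this is exactly the statement that $\langle b_{\beta}\rangle$ witnesses $\lambda$-shredding.

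The one delicate point is the compatibility of the two indiscernibility demands in the middle step: we must extract over $Aa$ both to make $\langle B_{\alpha}\rangle$ $Aa$-indiscernible and to preserve $\theta$ (which mentions $a$), yet this must not destroy the $A$-indiscernibility of the unpacked singleton sequence. This is fine because extraction over $Aa$ is a fortiori extraction over $A$ and $\overline b^{0}$ is $A$-indiscernible, so the block-level $A$-type faithfully records the singleton-level $A$-type; the only real labor is the bookkeeping that matches a mixed tuple of singletons (several drawn from one block, spread over several blocks) with the correct increasing subtuple of $B^{0}_{0}\ldots B^{0}_{r-1}$.
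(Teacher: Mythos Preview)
Your proof is correct and follows essentially the same approach as the paper: apply Lemma~\ref{finitization}, set $m=2n$, group into $2n$-blocks, extract an $Aa$-indiscernible sequence of blocks, and unpack. The paper's proof is considerably terser, simply asserting that after extraction the implication persists and that the unpacked sequence is $A$-indiscernible; you have spelled out both of these points carefully (the block-level EM-type argument for $A$-indiscernibility and the transfer of the $L(Aa)$-formula $\theta$), which is exactly the content the paper leaves to the reader.
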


\begin{proof}
Suppose $\varphi(x;a)$ $\lambda$-shreds over $A$.  By Lemma \ref{finitization}, there is an $A$-indiscernible sequence $\langle c_{\alpha} : \alpha < \lambda \rangle$, a number $n < \omega$, a sequence of $L(A)$-formulas $\overline{\psi} = \langle \psi_{l}(x;y_{0},\ldots, y_{n-1}) : l < k \rangle$, and a sequence $\overline{\eta} = \langle \eta_{l} : l < k \rangle$ with each $\eta_{l} \in {^{n}(2n)}$ an increasing function, such that, for every $\delta < \lambda$ divisible by $2n$, 
$$
\varphi(x;a) \vdash \bigvee_{l < k} \left[\psi_{l}(x;c_{\delta},\ldots, c_{\delta + n-1}) \leftrightarrow \neg \psi_{l}(x;c_{\delta + \eta_{l}(0)}, \ldots, c_{\delta + \eta_{l}(n-1)})\right].
$$
Let $m = 2n$ and extract an $Aa$-indiscernible sequence $\langle (b_{m \cdot \alpha}, b_{m \cdot \alpha + 1}, \ldots, b_{m \cdot \alpha + m-1}) : \alpha < \lambda \rangle$ from $\langle (c_{m \cdot \alpha}, c_{m \cdot \alpha + 1},\ldots, c_{m \cdot \alpha + m-1}) : \alpha < \lambda \rangle$.  Then for all $\delta < \lambda$ divisible by $2n$,
$$
\varphi(x;a) \vdash \bigvee_{l < k} \left[\psi_{l}(x;b_{\delta},\ldots, b_{\delta + n-1}) \leftrightarrow \neg \psi_{l}(x;b_{\delta + \eta_{l}(0)}, \ldots, b_{\delta + \eta_{l}(n-1)})\right].
$$
and $\langle b_{\alpha} : \alpha < \lambda \rangle$ is an $A$-indiscernible sequence, so we are done.  
\end{proof}

%\begin{cor}
%Suppose $\lambda = \text{cf}(\lambda) > |T| + |A|$.  Then the following are equivalent:
%\begin{enumerate}
%\item $\varphi(x;a)$ shreds over $A$, that is, $\varphi(x;a)$ $(|A|+|T|)^{+}$-shreds over $A$.
%\item $\varphi(x;a)$ $\lambda$-shreds over $A$.
%\end{enumerate}
%\end{cor}
%
%\begin{proof}
%Let $\kappa = (|T| + |A|)^{+}$.  If $\varphi(x;a)$ shreds, then there is an $A$-indiscernible sequence $\langle b_{\alpha} : \alpha < \kappa \rangle$ and $m < \omega$ as in Corollary \ref{indiscernible witness}.  Since $\kappa \leq \lambda$, we may stretch the $Aa$-indiscernible sequence $\langle (b_{m \cdot \alpha}, b_{m \cdot \alpha + 1}, \ldots, b_{m \cdot \alpha + m-1}) : \alpha < \kappa \rangle$ to $\langle (b_{m \cdot \alpha}, b_{m \cdot \alpha + 1}, \ldots, b_{m \cdot \alpha + m-1}) : \alpha < \lambda \rangle$.  Then the sequence $\langle b_{\alpha} : \alpha < \lambda \rangle$ is also $A$-indiscernible and witnesses that $\varphi(x;a)$ $\lambda$-shreds.  
%
%For the other direction, again using that $\kappa \leq \lambda$, we may restrict the sequence $\langle b_{\alpha} : \alpha < \lambda \rangle$ from Corollary \ref{indiscernible witness} to an initial segment $\langle b_{\alpha} : \alpha < \kappa \rangle$ which witnesses that $\varphi(x;a)$ shreds.  
%\end{proof}

From Lemma \ref{finitization}, we obtain a variant of shredding that is somewhat more cumbersome and less natural, but will be useful in the arguments below.

\begin{defn}
For an infinite cardinal $\lambda$, we say $\varphi(x;a)$ \emph{explicitly} $\lambda$\emph{-shreds} over $A$ if there are $n$, $\overline{b}$, $\overline{\eta}$, and $\overline{\psi}$ satisfying:
\begin{enumerate}
\item $\overline{b} = \langle b_{\alpha} : \alpha < \lambda \rangle$ is an $A$-indiscernible sequence.
\item $\overline{\eta} = \langle \eta_{l} : l < k \rangle$ is a finite sequence of increasing functions in ${^{n}}(2n)$.
\item $\overline{\psi} = \langle \psi_{l}(x;y_{0},\ldots, y_{n-1};a'_{l}) : l < k \rangle$ is a sequence of formulas with $a'_{l} \in A$.
\item For every $\delta < \lambda$ divisible by $2n$, we have 
$$
\varphi(x;a) \vdash \bigvee_{l < k} \left[ \psi_{l}(x;b_{\delta}, \ldots, b_{\delta+n-1},a'_{l}) \leftrightarrow \neg \psi_{l}(x;b_{\delta + \eta_{l}(0)}, \ldots, b_{\delta + \eta_{l}(n-1)},a'_{l}) \right].
$$
\end{enumerate}
We will often say that the tuple $(\overline{b},n,\overline{\eta},\overline{\psi})$ witnesses that $\varphi(x;a)$ explicitly $\lambda$-shreds over $A$.  We say $\varphi(x;a)$ \emph{explicitly shreds} over $A$ if it explicitly $\lambda$-shreds over $A$ for some $\lambda$.  As before, we will say that a type $p$ over $B \supseteq A$ explicitly shreds over $A$ if it implies some formula that does, and it explicitly shreds over $A$ with a built-in witness if the witnessing $A$-indiscernible sequence $\overline{b}$ may be chosen to be contained in $B$.  
\end{defn}

The point of introducing this definition is that explicit shredding is a notion that lends itself to compactness arguments, as in the following easy lemma:

\begin{lem} \label{shredding equivalents}
The following are equivalent:
\begin{enumerate}
\item The formula $\varphi(x;a)$ shreds over $A$.
\item The formula $\varphi(x;a)$ explicitly shreds over $A$.
\item The formula $\varphi(x;a)$ explicitly $\aleph_{0}$-shreds over $A$.
\item The formula $\varphi(x;a)$ explicitly $\lambda$-shreds over $A$ for all infinite cardinals $\lambda$. 
\end{enumerate}
\end{lem}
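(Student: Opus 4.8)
The plan is to peel off the trivial implications and reduce everything to one compactness-style transfer principle. The implications $(4)\Rightarrow(2)$ and $(4)\Rightarrow(3)$ are immediate, since $\aleph_{0}$ is an infinite cardinal and $(4)$ asserts explicit $\lambda$-shredding for all infinite $\lambda$. By the convention that $\lambda$ is omitted when $\lambda=(|T|+|A|)^{+}$, statement $(1)$ just says $\varphi(x;a)$ $\lambda_{0}$-shreds over $A$ for $\lambda_{0}:=(|T|+|A|)^{+}$, which is regular with $\lambda_{0}>|T|+|A|$; so Lemma \ref{finitization} applies and identifies $(1)$ with the assertion that $\varphi(x;a)$ explicitly $\lambda_{0}$-shreds over $A$ (for the ``divisible by $2n$'' reading, condition (2) of Lemma \ref{finitization} is exactly the definition of explicit $\lambda_{0}$-shredding). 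Hence everything follows from the \emph{transfer principle}: if $\varphi(x;a)$ explicitly $\lambda$-shreds over $A$ for some infinite cardinal $\lambda$, then it explicitly $\mu$-shreds over $A$ for every infinite cardinal $\mu$. One direction of this is trivial --- if $(\overline{b},n,\overline{\eta},\overline{\psi})$ witnesses explicit $\lambda$-shredding and $\mu\leq\lambda$ is infinite, then the truncation $(\langle b_{\alpha}:\alpha<\mu\rangle,n,\overline{\eta},\overline{\psi})$ witnesses explicit $\mu$-shredding, the sequence remaining $A$-indiscernible and clause (4) of the definition of explicit shredding being inherited since each relevant $\delta+\eta_{l}(i)$ stays below $\delta+2n<\mu$. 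So the content is in going up.

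For the upward direction I would argue by compactness. Fix a witness $(\overline{b},n,\overline{\eta},\overline{\psi})$ with $\overline{b}=\langle b_{\alpha}:\alpha<\lambda\rangle$ and $\overline{\psi}=\langle\psi_{l}(x;y_{0},\ldots,y_{n-1};a'_{l}):l<k\rangle$, and fix an arbitrary infinite $\mu$. Since $\neg(P\leftrightarrow\neg Q)$ is equivalent to $P\leftrightarrow Q$, clause (4) ``at position $\delta$'' is equivalent to $\models\Theta_{a}(b_{\delta},\ldots,b_{\delta+2n-1})$, where $\Theta_{a}(y_{0},\ldots,y_{2n-1})$ is the $L(Aa)$-formula
\[
\neg\exists x\Big[\varphi(x;a)\wedge\bigwedge_{l<k}\big(\psi_{l}(x;y_{0},\ldots,y_{n-1},a'_{l})\leftrightarrow\psi_{l}(x;y_{\eta_{l}(0)},\ldots,y_{\eta_{l}(n-1)},a'_{l})\big)\Big]
\]
(only the coordinates $y_{0},\ldots,y_{2n-1}$ occur, as each $\eta_{l}$ maps into $2n$). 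Now introduce new constant tuples $\langle c_{\alpha}:\alpha<\mu\rangle$ of the sort of $b_{0}$, and let $\Sigma$ be the theory consisting of the elementary diagram of $\mathbb{M}$, axioms saying that $\langle c_{\alpha}:\alpha<\mu\rangle$ is $A$-indiscernible, and the sentences $\Theta_{a}(c_{\delta},c_{\delta+1},\ldots,c_{\delta+2n-1})$ for every $\delta<\mu$ divisible by $2n$. A model of $\Sigma$ provides an $A$-indiscernible sequence of length $\mu$ which, together with the same $n,\overline{\eta},\overline{\psi}$, witnesses that $\varphi(x;a)$ explicitly $\mu$-shreds over $A$.

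So it remains to see that $\Sigma$ is finitely satisfiable. A finite subset mentions only $c_{\alpha}$ for $\alpha$ in a finite $F\subseteq\mu$ and only the $\Theta_{a}$-instances for $\delta$ in a finite set $D\subseteq F$ of ordinals divisible by $2n$; enlarging $F$ we may assume each block $[\delta,\delta+2n-1]$ with $\delta\in D$ lies in $F$, and these blocks are pairwise disjoint. Since $\lambda$ is infinite, there is an order-preserving injection $f:F\to\lambda$ sending each $\delta\in D$ to a multiple $2n\cdot m_{\delta}$ of $2n$ with $m_{\delta}<\omega$ and satisfying $f(\delta+j)=f(\delta)+j$ for $0\leq j<2n$ --- one simply spaces the image blocks $[2n\cdot m_{\delta},\,2n\cdot m_{\delta}+2n-1]$ far enough apart to absorb the finitely many remaining elements of $F$. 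Interpreting $c_{\alpha}$ as $b_{f(\alpha)}$ then satisfies the chosen finite subset: the indiscernibility axioms hold because $\overline{b}$ is $A$-indiscernible and $f$ is order-preserving, and each $\Theta_{a}(c_{\delta},\ldots,c_{\delta+2n-1})$ becomes $\Theta_{a}(b_{2n\cdot m_{\delta}},\ldots,b_{2n\cdot m_{\delta}+2n-1})$, which holds by clause (4) of explicit $\lambda$-shredding at the position $2n\cdot m_{\delta}$. I expect the main obstacle to be entirely bookkeeping: getting $\Theta_{a}$ to encode clause (4) faithfully, and checking that the block-respecting order-preserving map into $\lambda$ exists using nothing beyond infiniteness of $\lambda$. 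The only other point needing a word is the standard one of placing the length-$\mu$ witness inside the monster $\mathbb{M}$, which is harmless --- either pull it back along the saturation and homogeneity of $\mathbb{M}$ (enlarging the monster if necessary), or read the indiscernible sequences in the definitions as permitted to live in an elementary extension.
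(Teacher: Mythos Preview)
Your proof is correct and follows essentially the same idea as the paper's: the only nontrivial step is stretching a short explicit witness to arbitrary length by compactness. The paper arranges this as the cycle $(1)\Rightarrow(2)\Rightarrow(3)\Rightarrow(4)\Rightarrow(1)$, and for $(3)\Rightarrow(4)$ it regroups $\overline{b}$ into $2n$-blocks $b'_{i}=(b_{2n\cdot i},\ldots,b_{2n\cdot i+2n-1})$, invokes (the proof of) Corollary~\ref{indiscernible witness} to extract an $Aa$-indiscernible block sequence, and then stretches; $Aa$-indiscernibility is what makes clause~(4) propagate automatically to every block of the stretched sequence. Your upward transfer accomplishes the same thing more directly: rather than securing $Aa$-indiscernibility, you encode clause~(4) as the single $L(Aa)$-formula $\Theta_{a}$, put the instances $\Theta_{a}(c_{\delta},\ldots,c_{\delta+2n-1})$ into the type by hand, and verify finite satisfiability via a block-respecting order-embedding into the original $\overline{b}$. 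This is a little more self-contained (no appeal to Corollary~\ref{indiscernible witness}) at the price of the small bookkeeping argument for the embedding; either way the content is the same compactness step.
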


\begin{proof}
(1)$\implies$(2) is Lemma \ref{finitization}, and (2)$\implies$(3) is immediate, by restricting the witnessing indiscernible sequence to an initial segment of length $\omega$.  (4)$\implies$(1) is also immediate, taking any $\lambda \geq (|A| + |T|)^{+}$, since explicit shredding implies shredding.  

(3)$\implies$(4)  Let $\lambda$ be any uncountable cardinal and suppose $\varphi(x;a)$ explicitly $\aleph_{0}$-shreds, witnessed by $(\overline{b},n,\overline{\eta},\overline{\psi})$, where $\overline{b} = \langle b_{i} : i < \omega \rangle$.  Define $b'_{i} = (b_{2n \cdot i}, \ldots, b_{2n \cdot i + 2n - 1})$ for all $i < \omega$.  The sequence $\langle b'_{i} : i < \omega \rangle$ is also $A$-indiscernible and, without loss of generality, by (the proof of) Corollary \ref{indiscernible witness}, we may assume further that it is $Aa$-indiscernible.  Then applying compactness, we can stretch it to $\overline{b}' = \langle b'_{i} : i < \lambda \rangle $ with $b'_{i} = (b_{2n \cdot i}, \ldots, b_{2n \cdot i + 2n - 1})$ for all $i < \lambda$.  Then the sequence $\langle b_{i} : i < \lambda \rangle$ is $A$-indiscernible and, together with $n$, $\overline{\eta}$, and $\overline{\psi}$ witnesses that $\varphi(x;a)$ explicitly $\lambda$-shreds.  This shows $(4)$.  
\end{proof}

\begin{lem} \label{no parameters}
Suppose $A$ is a set of parameters and $B \subseteq A$.  The following are equivalent:
\begin{enumerate}
\item $\varphi(x;a)$ shreds over $A$.
\item There is an $A$-indiscernible sequence $\overline{b} = \langle b_{i} : i < \lambda \rangle$ for $\lambda = (|A| + |T|)^{+}$ such that for no $c \in \varphi(\mathbb{M};a)$ and for no $\alpha < \lambda$ is $\overline{b}_{\geq \alpha}$ indiscernible over $Bc$.  
\item $\varphi(x;a)$ explicitly shreds over $A$ witnessed by a tuple $(\overline{b},n,\overline{\eta},\overline{\psi})$, where the formulas $\overline{\psi}$ have no parameters (i.e. are over the empty set).  
\end{enumerate}
\end{lem}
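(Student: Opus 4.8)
The plan is to run the cycle (2) $\Rightarrow$ (1) $\Rightarrow$ (3) $\Rightarrow$ (2), with the genuine content concentrated in the implications (1) $\Rightarrow$ (3) and (3) $\Rightarrow$ (2); the implication (2) $\Rightarrow$ (1) is just a matter of unwinding definitions. For (2) $\Rightarrow$ (1): given an $A$-indiscernible $\overline{b}$ of length $\lambda = (|A|+|T|)^{+}$ as in (2), if for some $\alpha < \lambda$ and $c \in \varphi(\mathbb{M};a)$ the sequence $\overline{b}_{\geq\alpha}$ were indiscernible over $Ac$, then, since $B \subseteq A$, it would be indiscernible over $Bc$, contradicting (2); so $\overline{b}$ witnesses that $\varphi(x;a)$ $\lambda$-shreds over $A$, i.e. shreds over $A$.

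For (1) $\Rightarrow$ (3), I would first invoke Lemma \ref{shredding equivalents} to pass from shredding to an explicit shredding witness $(\overline{b}, n, \overline{\eta}, \overline{\psi})$ with $\overline{\psi} = \langle \psi_{l}(x;y_{0},\ldots,y_{n-1};a'_{l}) : l < k \rangle$ and each $a'_{l}$ a tuple from $A$, and then absorb the parameters into the indiscernible sequence: replace each $b_{\alpha}$ by the tuple $b^{*}_{\alpha} = (b_{\alpha}, a'_{0}, \ldots, a'_{k-1})$. The two things to check are that $\overline{b}^{*}$ is still $A$-indiscernible (true because we appended one fixed tuple from $A$ to every element, so the $A$-type of an increasing block from $\overline{b}^{*}$ is obtained from that of the corresponding block of $\overline{b}$ by substituting these constants into the added coordinates) and that each $\psi_{l}$, evaluated on $\overline{b}^{*}$, is computed by a parameter-free formula $\psi^{*}_{l}(x;y^{*}_{0},\ldots,y^{*}_{n-1})$ in the enlarged sort, which picks out the original arguments together with the copy of $a'_{l}$ sitting inside the appended block of its first argument; then $\psi^{*}_{l}(x;b^{*}_{i_{0}},\ldots,b^{*}_{i_{n-1}}) = \psi_{l}(x;b_{i_{0}},\ldots,b_{i_{n-1}};a'_{l})$. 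Hence $(\overline{b}^{*}, n, \overline{\eta}, \overline{\psi}^{*})$ witnesses that $\varphi(x;a)$ explicitly shreds over $A$ with parameter-free formulas, which is (3).

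For (3) $\Rightarrow$ (2), I would start from a parameter-free explicit shredding witness and use the stretching half of Lemma \ref{shredding equivalents} — which only lengthens the indiscernible sequence and leaves $\overline{\psi}$ untouched, hence preserves parameter-freeness — to arrange that the witnessing $A$-indiscernible sequence $\overline{b} = \langle b_{i} : i < \lambda \rangle$ has length exactly $\lambda = (|A|+|T|)^{+}$. I then claim this $\overline{b}$ works in (2). Indeed, if $c \in \varphi(\mathbb{M};a)$ and $\alpha < \lambda$, pick a multiple $\delta$ of $2n$ with $\alpha \leq \delta < \lambda$ (the multiples of $2n$ are cofinal in $\lambda$); the biconditional clause of explicit shredding at $\delta$, satisfied by $c$, exhibits two increasing $n$-tuples from $\overline{b}_{\geq\alpha}$ (their indices lie in $[\delta, \delta+2n) \subseteq [\alpha,\lambda)$ because $\eta_{l} \in {}^{n}(2n)$) on which some $\psi_{l}(c;-)$ — a formula over $c$ alone, as $\psi_{l}$ is parameter-free — disagrees; hence $\overline{b}_{\geq\alpha}$ is not even $c$-indiscernible, so a fortiori not $Bc$-indiscernible, giving (2).

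The main obstacle is the parameter-absorption step in (1) $\Rightarrow$ (3): one must make sure that appending the constants keeps the sequence $A$-indiscernible and that the resulting $\psi^{*}_{l}$ are genuinely parameter-free in the enlarged language of tuples. Everything else is bookkeeping, together with keeping straight that indiscernibility over a larger parameter set is the stronger condition — so "not indiscernible over $c$" $\Rightarrow$ "not indiscernible over $Bc$", while "indiscernible over $Ac$" $\Rightarrow$ "indiscernible over $Bc$" — which is exactly what makes both (2) $\Rightarrow$ (1) and the final step of (3) $\Rightarrow$ (2) go through.
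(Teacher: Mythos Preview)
Your proposal is correct and follows essentially the same route as the paper: the same cycle $(2)\Rightarrow(1)\Rightarrow(3)\Rightarrow(2)$, with $(1)\Rightarrow(3)$ handled by absorbing the parameters $a'_{l}$ into each term of the indiscernible sequence (the paper writes this as appending a single tuple $c$ enumerating all parameters, which amounts to the same thing), and $(3)\Rightarrow(2)$ by observing that parameter-free $\psi_{l}$ already block $c$-indiscernibility of any end segment. Your explicit invocation of the stretching part of Lemma~\ref{shredding equivalents} to get the witness of length $(|A|+|T|)^{+}$ in $(3)\Rightarrow(2)$ is a point the paper leaves implicit.
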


\begin{proof}
(2)$\implies$(1) is clear by the definition of shredding, since in particular (2) entails that for no $c \in \varphi(\mathbb{M};a)$ and $\alpha < \lambda$ is $\overline{b}_{\geq \alpha}$ indiscernible over $Ac$.  

(3)$\implies$(2) since, if $\overline{b} = \langle b_{\alpha} : \alpha < \lambda \rangle$, then, for all $\delta < \lambda$ divisible by $2n$, we have the implication
$$
\varphi(x;a) \vdash \bigvee_{l < k} \left[ \psi_{l}(x;b_{\delta}, \ldots, b_{\delta+n-1}) \leftrightarrow \neg \psi_{l}(x;b_{\delta + \eta_{l}(0)}, \ldots, b_{\delta + \eta_{l}(n-1)}) \right].
$$
which implies that no end segment of $\overline{b}$ can be indiscernible over a realization of $\varphi(x;a)$ (with no additional parameters).  \emph{A fortiori}, no end segment of $\overline{b}$ can be indiscernible over a set consisting of $B$ and a realization of $\varphi(x;a)$.  

To prove (1)$\implies$(3), we know, by Lemma \ref{shredding equivalents}, $\varphi(x;a)$ explicitly shreds over $A$, witnessed by the tuple $(\overline{b},n,\overline{\eta},\overline{\psi})$.  Let $c$ be a tuple enumerating the parameters occuring in the $\overline{\psi}$ and let $\overline{d}$ be the sequence $\overline{d} = \langle d_{\alpha} : \alpha < \lambda \rangle =  \langle (b_{\alpha},c) : \alpha < \lambda \rangle$, which is $A$-indiscernible since $\overline{b}$ was assumed to be $A$-indiscernible and $c$ comes from $A$.  Then it is easily seen that by merely adding dummy variables to the formulas $\overline{\psi}$, we get $\overline{\psi}' = \langle \psi'_{l} : l < k \rangle$ such that for every $\delta < \lambda$ divisible by $2n$, we have 
$$
\varphi(x;a) \vdash \bigvee_{l < k} \left[ \psi'_{l}(x;d_{\delta}, \ldots, d_{\delta+n-1}) \leftrightarrow \neg \psi'_{l}(x;d_{\delta + \eta_{l}(0)}, \ldots, d_{\delta + \eta_{l}(n-1)}) \right].
$$
Then $\varphi(x;a)$ explicitly shreds, witnessed by the tuple $(\overline{d},n,\overline{\eta},\overline{\psi}')$, where the formulas $\overline{\psi}'$ have no parameters.
\end{proof}

The direction (1)$\implies$(2) of Lemma \ref{no parameters} gives base monotonicity for shredding:

\begin{cor} \label{base monotonicity}
Suppose $B \subseteq A$ and $\varphi(x;a)$ shreds over $A$, then $\varphi(x;a)$ shreds over $B$.  
\end{cor}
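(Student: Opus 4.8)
The plan is to reduce everything to the equivalences already established; the only genuine issue is that, by clause (4) of the definition, "$\varphi(x;a)$ shreds over $A$" and "$\varphi(x;a)$ shreds over $B$" unwind to $\lambda$-shredding for the two possibly different cardinals $(|T|+|A|)^{+}$ and $(|T|+|B|)^{+}$, so a little cardinal bookkeeping is unavoidable. First I would apply the implication (1)$\implies$(2) of Lemma~\ref{no parameters}: since $\varphi(x;a)$ shreds over $A$, there is an $A$-indiscernible sequence $\overline{b} = \langle b_{\alpha} : \alpha < \lambda \rangle$ with $\lambda = (|A|+|T|)^{+}$ such that for no $c \in \varphi(\mathbb{M};a)$ and no $\alpha < \lambda$ is $\overline{b}_{\geq \alpha}$ indiscernible over $Bc$. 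It is essential to route through Lemma~\ref{no parameters} rather than the bare definition of shredding over $A$: the latter only guarantees non-indiscernibility of end-segments over $Ac$, which is strictly weaker than what we need over $Bc$ (an end-segment could be $Bc$-indiscernible without being $Ac$-indiscernible).

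Now observe that, because $B \subseteq A$, the sequence $\overline{b}$ is a fortiori $B$-indiscernible. Combined with the previous paragraph, this says precisely, directly from the definition of $\lambda$-shredding (clauses (a) and (b)), that $\overline{b}$ witnesses that $\varphi(x;a)$ $\lambda$-shreds over $B$, where $\lambda = (|A|+|T|)^{+}$.

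It remains to pass from $\lambda$-shredding over $B$ with this particular $\lambda$ to shredding over $B$ in the default sense, namely $(|T|+|B|)^{+}$-shredding over $B$. Since $B \subseteq A$ gives $|B| \leq |A|$, the cardinal $\lambda = (|A|+|T|)^{+}$ is regular and satisfies $\lambda > |T| + |B|$, so Lemma~\ref{finitization}, applied with base $B$, converts $\lambda$-shredding over $B$ into explicit $\lambda$-shredding over $B$; and explicit $\lambda$-shredding over $B$ implies explicit shredding over $B$, which by Lemma~\ref{shredding equivalents} is exactly shredding over $B$. The main obstacle — indeed the only non-formal point — is precisely this reconciliation of the two default cardinals attached to the bases $A$ and $B$; everything else is immediate from the cited lemmas.
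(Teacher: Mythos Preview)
Your proof is correct and follows essentially the same route as the paper, which simply records that the direction (1)$\implies$(2) of Lemma~\ref{no parameters} yields base monotonicity. You have made explicit the cardinal bookkeeping (passing from $(|A|+|T|)^{+}$-shredding over $B$ to shredding over $B$ via Lemma~\ref{finitization} and Lemma~\ref{shredding equivalents}) that the paper leaves to the reader.
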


\begin{prop} \label{the main equivalence prop}
Suppose $\kappa$ is a regular cardinal and $m < \omega$.  The following are equivalent:
\begin{enumerate}
\item There is an increasing sequence $\overline{A} = \langle A_{i} : i \leq \kappa \rangle$ with $A_{\kappa} = \bigcup_{i < \kappa} A_{i}$ and $p \in S^{m}(A_{\kappa})$ such that $p$ (explicitly) shreds over $A_{i}$ for all $i < \kappa$. 
\item There is an increasing continuous sequence of models $\overline{M} = \langle M_{i} : i \leq \kappa \rangle$ with $M_{\kappa} = \bigcup_{i < \kappa} M_{i}$ and some $p \in S^{m}(M_{\kappa})$ such that $p \upharpoonright M_{i+1}$ shreds over $M_{i}$ with a built-in witness.  
\end{enumerate}
\end{prop}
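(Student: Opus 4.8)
The plan is to treat the two implications separately; (2)$\Rightarrow$(1) is essentially immediate, and the substance is in (1)$\Rightarrow$(2).

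For (2)$\Rightarrow$(1): given $\overline{M}$ and $p$ as in (2), I would take $A_i:=M_i$. Continuity of $\overline{M}$ gives $A_\kappa=\bigcup_{i<\kappa}M_i=\bigcup_{i<\kappa}A_i$, and since $p\upharpoonright M_{i+1}$ shreds over $M_i$ it implies a formula shredding over $A_i=M_i$, so $p$ shreds over $A_i$ for every $i<\kappa$; by Lemma~\ref{shredding equivalents} that same formula explicitly shreds over $A_i$, which takes care of the parenthetical in (1).

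For (1)$\Rightarrow$(2) the idea is to fatten the $A_i$ to models without losing the shredding. First I would preprocess. Replacing $\overline{A}$ by $\langle\bigcup_{j<i}A_j:i\le\kappa\rangle$ and applying base monotonicity (Corollary~\ref{base monotonicity}) to a formula witnessing that $p$ shreds over $A_i$, I may assume $\overline{A}$ is increasing \emph{continuous} with the same union $A_\kappa$. Then, fixing for each $i<\kappa$ a formula $\varphi_i(x;a_i)$ with $p\vdash\varphi_i(x;a_i)$ and $\varphi_i(x;a_i)$ shredding over $A_i$, letting $g(i)$ be least with $a_i\in A_{g(i)}$, and reindexing along a club of closure points of $g$ (which preserves continuity, keeps the union $A_\kappa$, and keeps $p$ shredding over each term), I may also assume $a_i\in A_{i+1}$ for all $i$. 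Now fix $e\models p$ and build, by recursion on $i\le\kappa$, an increasing continuous chain of models $\langle M_i:i\le\kappa\rangle$ with $A_i\subseteq M_i$, together with, for each $i<\kappa$, an indiscernible sequence $\overline{b}^i\subseteq M_{i+1}$ of length $(|T|+|M_i|)^+$ witnessing that $\varphi_i(x;a_i)$ shreds over $M_i$. Limit stages are unions (still a model, as a union of an elementary chain, and it contains $A_\delta=\bigcup_{i<\delta}A_i$ by continuity). At stage $i+1$, once $\varphi_i(x;a_i)$ is known to shred over the larger base $M_i$ with a witness $\overline{b}^i$ of the stated length, I let $M_{i+1}$ be any model containing $M_i\cup A_{i+1}\cup\overline{b}^i$ (note $a_i\in A_{i+1}\subseteq M_{i+1}$). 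Finally put $q:=\mathrm{tp}(e/M_\kappa)\in S^m(M_\kappa)$. For each $i<\kappa$ we have $\models\varphi_i(e;a_i)$ with $a_i\in M_{i+1}$, so $q\upharpoonright M_{i+1}\vdash\varphi_i(x;a_i)$, and this formula shreds over $M_i$ with built-in witness $\overline{b}^i\subseteq M_{i+1}$; hence $q\upharpoonright M_{i+1}$ shreds over $M_i$ with a built-in witness, which is (2).

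The hard part is the step indicated above: ensuring that $\varphi_i(x;a_i)$ shreds over $M_i$ and not merely over $A_i$. Shredding is \emph{not} monotone under enlarging the base, so one cannot simply reuse an $A_i$-indiscernible witness $\overline{c}$ for shredding of $\varphi_i(x;a_i)$ over $A_i$: it need not remain indiscernible over the larger model $M_i$ (already in dense linear orders no $\emptyset$-indiscernible sequence survives into any model). The fix I have in mind is to build $M_i$ as a \emph{coheir} over $A_i$ --- arrange that $\mathrm{tp}(M_i/\mathbb{M}_0)$ is finitely satisfiable in $A_i$, where $\mathbb{M}_0\supseteq A_\kappa$ is a fixed large parameter set containing the witnessing sequences used in the construction. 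Finite satisfiability over $A_i$ forces every $A_i$-indiscernible sequence lying in $\mathbb{M}_0$ to be $M_i$-indiscernible: if some $m\in M_i$ separated two increasing tuples of such a sequence, then by finite satisfiability some tuple from $A_i$ would separate them as well, contradicting $A_i$-indiscernibility. Consequently, taking (by Lemma~\ref{shredding equivalents}) a witness $\overline{c}$ for explicit $(|T|+|M_i|)^+$-shredding of $\varphi_i(x;a_i)$ over $A_i$, of length $(|T|+|M_i|)^+$ and lying in $\mathbb{M}_0$, the sequence $\overline{c}$ is automatically $M_i$-indiscernible; the explicit-shredding implications and the parameters of the $\psi_l$'s being unchanged, $\overline{c}$ witnesses that $\varphi_i(x;a_i)$ explicitly $(|T|+|M_i|)^+$-shreds over $M_i$ --- in particular no end segment of $\overline{c}$ is $M_i c$-indiscernible for $c\in\varphi_i(\mathbb{M},a_i)$ --- so $\varphi_i(x;a_i)$ shreds over $M_i$ and I take $\overline{b}^i:=\overline{c}$. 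The remaining delicate point, and where the recursion must be organized with care, is to reconcile ``$M_{i+1}$ is a coheir over $A_{i+1}$'' with ``$\overline{b}^i\subseteq M_{i+1}$'' (a non-constant indiscernible sequence cannot already be internal to $M_i$), and to propagate the coheir condition through limit stages; this is handled by choosing $\mathbb{M}_0$ and the witnesses $\overline{b}^i$ compatibly with one another and using the transitivity of finite satisfiability.
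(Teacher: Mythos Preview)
Your treatment of (2)$\Rightarrow$(1) and the preprocessing for (1)$\Rightarrow$(2) are fine. The genuine gap is in the coheir step. You propose to arrange that $\mathrm{tp}(M_i/\mathbb{M}_0)$ is finitely satisfiable in $A_i$, but this is generally impossible when $A_i$ is not already a model. Concretely: since $A_i\subseteq\mathbb{M}_0$, any $m\in M_i\setminus A_i$ satisfies the formula $\bigwedge_{a\in A_0}x\neq a$ for every finite $A_0\subseteq A_i$, and more to the point, if $A_i$ fails to witness some existential formula $\exists y\,\psi(y,\overline{a})$ with $\overline{a}\in A_i$, then the witness $m\in M_i$ has $\psi(x,\overline{a})\in\mathrm{tp}(m/\mathbb{M}_0)$ with no realization in $A_i$. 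So there is no model $M_i\supseteq A_i$ whose type over $\mathbb{M}_0$ is finitely satisfiable in $A_i$ unless $A_i$ is itself (essentially) a model. The ``delicate point'' you flag at the end is real, but the obstruction arises already before one gets to reconciling $\overline{b}^i\subseteq M_{i+1}$ with the coheir condition.

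The paper sidesteps this entirely. The key observation is Lemma~\ref{no parameters}: one may take the explicit-shredding witnesses $(\overline{b}_i,n_i,\overline{\eta}_i,\overline{\psi}_i)$ with the formulas $\overline{\psi}_i$ parameter-free. Once that is done, the shredding implication
\[
\varphi_i(x;a_i)\ \vdash\ \bigvee_{l<k}\bigl[\psi_l(x;b_{\delta},\ldots)\leftrightarrow\neg\psi_l(x;b_{\delta+\eta_l(0)},\ldots)\bigr]
\]
involves no elements of $A_i$ at all, so it persists under replacing $\overline{b}_i$ by anything with the same EM-type over $\emptyset$ (indeed over $a_i$). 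The construction then goes by compactness: write down a type asserting the existence of the chain of models and built-in witnesses, and check finite consistency by induction on the length $l$. At the inductive step one picks an arbitrary model $M\supseteq A_lM_{l-1}\overline{c}_{l-1}$, \emph{extracts} an $M$-indiscernible sequence $\overline{b}'_l$ from $\overline{b}_l$, and then applies an automorphism over $A_l$ sending $\overline{b}'_l$ back to $\overline{b}_l$ (moving the previously built models instead). No coheir condition is needed; indiscernible extraction plus the parameter-free form of the witness is what transports shredding from $A_i$ up to the model $M_i$.
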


\begin{proof}
The direction (2)$\implies$(1) is immediate by Lemma \ref{shredding equivalents}, taking $A_{i} = M_{i}$ for all $i \leq \kappa$.  

(1)$\implies$(2):  for each $i < \kappa$, fix a formula $\varphi_{i}(x;a_{i}) \in p$ that explicitly shreds over $A_{i}$, witnessed by $(\overline{b}_{i}, n_{i}, \overline{\eta}_{i},\overline{\psi}_{i})$.  By Lemma \ref{no parameters}, we may assume that $\overline{b}_{i}$ and $\overline{\psi}_{i}$ have been chosen so that the formulas in $\overline{\psi}_{i}$ have no parameters.  By the regularity of $\kappa$, after replacing the sequence with a subsequence, we may assume $\varphi_{i}(x;a_{i}) \in p \upharpoonright A_{i+1}$.  Moreover, without loss of generality, we may assume $\overline{b}_{i} = \langle b_{i,j} : j < \omega \rangle$ for all $i < \kappa$.  

Our assumption that $\overline{\psi}_{i}$ contains no parameters entails that $\varphi_{i}(x;a_{i})$ explicitly shreds over any subset of $A_{i}$ and, in particular, that $\varphi_{i}(x;a_{i})$ shreds over $a_{<i}$.  Therefore we may replace $A_{i}$ by $a_{<i}$ and $p$ by $p \upharpoonright a_{<\kappa}$ and, hence, without loss of generality, the sequence $\langle A_{i} : i < \kappa \rangle$ is increasing and continuous.  

Let $\overline{\lambda} = \langle \lambda_{i} : i < \kappa \rangle$ be an increasing and continuous sequence of cardinals $\geq |T|$ with $\lambda_{i} \geq |A_{i}|$ and $\lambda_{i+1}$ regular for all $i < \kappa$.  Denote $\lim_{i < \kappa} \lambda_{i}$ by $\mu$.  Let $\overline{y} = \langle y_{j} : j < \mu \rangle$ be a sequence of variables of length $\mu$ and denote by $\overline{y}_{i}$ the restriction $\langle y_{j} : j < \lambda_{i} \rangle$ to the first $\lambda_{i}$ variables.  

Let $\Gamma(\overline{y},\overline{z}_{i} : i < \kappa)$ be a partial type over $A_{\kappa}$ such that the variables $\overline{z}_{i} = \langle z_{i,j} : j < \lambda_{i+1}\rangle$ have length $\lambda_{i+1}$, and which naturally expresses the following, for all $i < \kappa$:
\begin{enumerate}
\item The sequence $\overline{y}_{i}$ enumerates a model containing $A_{i}$.
\item The sequence $\overline{z}_{i}$ is indiscernible over $\overline{y}_{i}$, realizes the same EM-type over $A_{i}$ as $\overline{b}_{i}$, and is contained in $\overline{y}_{i+1}$.
\item The formula $\varphi_{i}(x;a_{i})$ explicitly shreds over $\overline{y}_{i}$, witnessed by $(\overline{z}_{i}, n_{i},\overline{\eta}_{i},\overline{\psi}_{i})$.  
\end{enumerate}
It suffices to show that this partial type is consistent, as to conclude we may take any complete type over the union of models realizing the $\overline{y}_{i}$ containing $\{\varphi(x;a_{i}) : i < \kappa\}$.  By compactness, it suffices to show this for $\kappa$ finite.  By induction on $\kappa < \omega$, we will show that we can find models and sequences satisfying the conditions in the partial type above.  Suppose this has been shown for $\kappa = l $.  By induction, we know there are models $\langle M_{j} : j < l \rangle$ and sequences $\langle \overline{c}_{j} : j < l\rangle$ satisfying the requirements.  Choose an arbitrary model $M$ of size $\lambda_{l}$ containing $A_{l}M_{l-1}\overline{c}_{l-1}$.  Extract an $M$-indiscernible sequence $\overline{b}'_{l}$ from $\overline{b}_{l}$.  Then $\overline{b}'_{l} \equiv_{A_{l}} \overline{b}_{l}$ so there is an automorphism $\sigma \in \mathrm{Aut}(\mathbb{M}/A_{l})$ with $\sigma(\overline{b}'_{l}) = \overline{b}_{l}$.  For each $j < l$, define $M'_{j} = \sigma(M_{j})$ and $\overline{c}'_{j} = \sigma(\overline{c}_{j})$, and then put $M'_{l} = \sigma(M)$.  

Finally, let $m = n_{l}$ and consider the sequence $\langle (b_{l,2m \cdot i}, \ldots, b_{l,2m \cdot i + 2m - 1}) : i < \omega \rangle$.  Let $\overline{b}''_{l} = \langle (b''_{2m \cdot i},\ldots, b''_{2m \cdot i + 2m - 1} ): i < \lambda_{l+1} \rangle$ be an $M'_{l}a_{l}$-indiscernible sequence realizing the same EM-type over $M_{l-1}A_{l}a_{l}$ as $\langle (b_{l,2m \cdot i}, \ldots, b_{l,2m \cdot i + 2m - 1}) : i < \omega \rangle$.  Then defining $\overline{c}'_{l} = \langle b''_{i} : i < \lambda_{l+1} \rangle$, we have that $\overline{c}'_{l}$ is an $M'_{l}$-indiscernible sequence and $\varphi_{l}(x;a_{l})$ explicitly shreds over $M'_{l}$, witnessed by $(\overline{c}'_{l}, n_{l}, \overline{\eta}_{l}, \overline{\psi}_{l})$.  It follows that $\langle M'_{j} : j < l+1 \rangle$ and $\langle \overline{c}'_{j} : j < l+1 \rangle$ satisfy the requirements, completing the induction and the proof. \end{proof}

\begin{rem} \label{regular size}
Note that, in the course of the proof Proposition \ref{the main equivalence prop}, we were able to replace each $A_{i}$ with $a_{<i}$, in which case we clearly have $|A_{i}| + \aleph_{0} = |i| + \aleph_{0}$ (in fact, for finite $i$ we have $|A_{i}| = l(a_{0})i$ and for infinite $i$ we have $|A_{i}| = |i|$).  

It follows, then, that if $\kappa$ is a regular cardinal and $\kappa^{m}_{\mathrm{shred}}(T) \geq \kappa^{+}$, then we can find a witness of the form $\langle M_{i} : i \leq \kappa \rangle$ and $p \in S^{m}(M_{\kappa})$ with $|M_{0}|$ an arbitrary regular cardinal $\geq |T|$, $\langle |M_{i}| : i < \kappa \rangle$ an increasing and continuous sequence of cardinals, and with $|M_{i+1}|$ a regular cardinal for all $i < \kappa$.  
\end{rem}

\subsection{Shredding and classification theory}

Here we establish some preliminary connections between the concepts of shredding and unshreddable theories with NIP and simplicity.  

\begin{defn}
Recall that the formula $\varphi(x;y)$ has the \emph{independence property} if for every $n$, there are $a_{0}, \ldots, a_{n-1}$ and tuples $b_{w}$ for every $w \subseteq \{0,\ldots, n-1\}$ so that 
$$
\models \varphi(a_{i},b_{w}) \iff i \in w.
$$
A theory is said to have the independence property if some formula does modulo $T$, otherwise T is NIP.  
\end{defn}

Equivalently, the formula $\varphi(x;y)$ has the independence property if there is an indiscernible sequence $\langle a_{i} : i < \omega\rangle$ and $b$ so that $\models \varphi(a_{i},b)$ if and only if $i$ is even (see, e.g., \cite[Lemma 2.7]{simon2015guide}).  

\begin{prop} \label{NIP implies unshreddable}
If $\lambda = \text{cf}(\lambda) > |T| + |A|$ and some consistent formula $\varphi(x;a)$ $\lambda$-shreds over $A$, then $T$ has the independence property.
\end{prop}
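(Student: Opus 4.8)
The plan is to argue directly, exhibiting a formula with the independence property. Since $\varphi(x;a)$ $\lambda$-shreds over $A$ and $\lambda = \mathrm{cf}(\lambda) > |T| + |A|$, Lemma~\ref{finitization} provides an $A$-indiscernible sequence $\overline{b} = \langle b_{\alpha} : \alpha < \lambda \rangle$, a natural number $n$, a finite sequence of increasing functions $\eta_{0}, \dots, \eta_{k-1} \in {^{n}}(2n)$, and formulas $\psi_{l}(x;y_{0},\dots,y_{n-1};a'_{l})$ with $a'_{l} \in A$ for $l < k$, such that for every $\delta < \lambda$ divisible by $2n$,
$$
\varphi(x;a) \vdash \bigvee_{l<k}\bigl[\psi_{l}(x;b_{\delta},\dots,b_{\delta+n-1},a'_{l}) \leftrightarrow \neg\psi_{l}(x;b_{\delta+\eta_{l}(0)},\dots,b_{\delta+\eta_{l}(n-1)},a'_{l})\bigr].
$$
Using that $\varphi(x;a)$ is consistent (this hypothesis is essential, since inconsistent formulas shred vacuously), fix a realization $c \models \varphi(x;a)$. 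For each $\delta < \lambda$ divisible by $2n$ the disjunction is satisfied at $c$, so there are $l(\delta) < k$ and $\varepsilon(\delta) \in \{0,1\}$ such that $\psi_{l(\delta)}(c; b_{\delta}, \dots, b_{\delta+n-1}, a'_{l(\delta)})$ holds iff $\varepsilon(\delta) = 1$, while $\psi_{l(\delta)}(c; b_{\delta+\eta_{l(\delta)}(0)}, \dots, b_{\delta+\eta_{l(\delta)}(n-1)}, a'_{l(\delta)})$ holds iff $\varepsilon(\delta) = 0$. As there are infinitely many eligible $\delta$ but only finitely many pairs $(l,\varepsilon)$, I can choose an infinite set $D$ of ordinals divisible by $2n$ on which $(l(\delta),\varepsilon(\delta))$ is constant, equal to $(l^{*},\varepsilon^{*})$ say; write $\psi := \psi_{l^{*}}$, $\eta := \eta_{l^{*}}$, $a'' := a'_{l^{*}}$.

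The heart of the argument is then to build an $a''$-indiscernible sequence along which $\psi(-;a'',c)$ alternates, so as to conclude by the characterization of the independence property recalled above (\cite[Lemma~2.7]{simon2015guide}). Enumerate the first $\omega$ elements of $D$ increasingly as $\delta_{0} < \delta_{1} < \dots$; since they are divisible by $2n$, the intervals $[\delta_{i}, \delta_{i}+2n)$ are pairwise disjoint and increasing. For each $i$ I will take $T^{i}$ to be one of the two $n$-windows $(b_{\delta_{i}}, \dots, b_{\delta_{i}+n-1})$ or $(b_{\delta_{i}+\eta(0)}, \dots, b_{\delta_{i}+\eta(n-1)})$ inside block $i$, choosing, according to the fixed orientation $\varepsilon^{*}$, the window making $\psi(-;a'',c)$ true when $i$ is even and the other window when $i$ is odd; both windows lie in $[\delta_{i}, \delta_{i}+2n)$, so whatever the choice, the concatenation $T^{0}, T^{1}, T^{2}, \dots$ reads off a subsequence of $\overline{b}$ in strictly increasing index order. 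Hence $\langle T^{i} : i < \omega \rangle$ is obtained from an $A$-indiscernible (in particular $a''$-indiscernible) subsequence of $\overline{b}$ by grouping into consecutive $n$-tuples, so it is $a''$-indiscernible, while by construction $\psi(T^{i}; a'', c)$ holds precisely when $i$ is even. Regarding $\psi$ as an $L$-formula with object variables $y_{0}, \dots, y_{n-1}$ and parameter variables for $x$ and $a''$, the sequence $\langle T^{i} : i < \omega \rangle$ together with the tuple $(c,a'')$ witnesses that this formula has the independence property; hence so does $T$.

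The step I expect to require the most care is exactly this construction. The tempting move is to pair, block by block, the window on which $\psi(-;a'',c)$ is true with the one on which it is false; but inside a single block these two $n$-tuples overlap and interleave in a fixed nontrivial way, so a sequence listing such same-block pairs is \emph{not} indiscernible, since a same-block pair has a different internal configuration than a pair straddling consecutive blocks. The resolution is that, after the pigeonhole, \emph{every} initial window over $D$ realizes one fixed truth value of $\psi(-;a'',c)$ and \emph{every} $\eta$-window over $D$ realizes the opposite one, so one may freely draw the ``true'' tuples from some blocks and the ``false'' tuples from other, strictly later, blocks; the result is a genuine increasing subsequence of $\overline{b}$, hence indiscernible, along which $\psi$ alternates perfectly. (One could instead argue the contrapositive, using the standard NIP fact that an $A$-indiscernible sequence of regular length $> |T|+|A|$ becomes $Ac$-indiscernible on some end segment for every tuple $c$, which contradicts clause (b) in the definition of $\lambda$-shredding once $c$ is taken to realize the consistent formula $\varphi(x;a)$; but the direct route above uses only Lemma~\ref{finitization} and the cited characterization of the independence property.)
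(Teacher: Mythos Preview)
Your proof is correct and follows essentially the same approach as the paper: apply the finitary characterization of shredding, pick a realization $c$, pigeonhole on the disjunct index and truth value, and then build an $A$-indiscernible sequence by selecting, from distinct blocks, either the initial window or the $\eta$-window so that $\psi$ alternates. The only cosmetic differences are that the paper works with multiples of $\omega$ (limit ordinals) rather than multiples of $2n$ to separate the blocks, and that it extracts a set $X$ of size $\lambda$ rather than merely an infinite set; neither affects the argument.
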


\begin{proof}
Suppose $\varphi(x;a)$ $\lambda$-shreds over $A$.  Then by Lemma \ref{shredding equivalents}, it explicitly $\lambda$-shreds so we may fix $k$, $n$, $\overline{\psi}$, $\overline{\eta}$, and $\overline{b} = \langle b_{\alpha} : \alpha < \lambda \rangle$ as in the definition of explicit shredding.  Let $c$ be an arbitrary element of $\varphi(\mathbb{M};a)$.  By the pigeonhole principle, there is a subset $X \subseteq \lambda$ of size $\lambda$, $l < k$, and $t \in \{0,1\}$ so that
$$
\models \psi_{l}(c;b_{\omega \cdot \alpha},\ldots, b_{\omega \cdot \alpha + n-1},a'_{l})^{t} \wedge \psi_{l}(c;b_{\omega \cdot \alpha + \eta_{l}(0)}, \ldots, b_{\omega \cdot \alpha + \eta_{l}(n-1)},a'_{l})^{1-t}
$$
for all $\alpha \in X$.  Let $\langle \alpha_{i} : i < \lambda \rangle$ be an increasing enumeration of $X$.  For $i < \lambda$ even, we define $d_{i} = (b_{\omega \cdot \alpha_{i}},\ldots, b_{\omega \cdot \alpha_{i} + n-1})$ and for $i < \lambda$ odd, we define $d_{i} = (b_{\omega \cdot \alpha_{i} + \eta_{l}(0)}, \ldots, b_{\omega \cdot \alpha_{i} + \eta_{l}(n-1)})$.  Then $\langle d_{i} : i < \lambda \rangle$ is an $A$-indiscernible sequence, by the $A$-indiscernibility of $\overline{b}$, and we have 
$$
c \models \{\psi_{l}(x,d_{i},a'_{l})^{t} : i < \lambda \text{ even}\} \cup \{\psi_{l}(x;d_{i},a'_{l})^{1-t} : i < \lambda \text{ odd}\},
$$
which shows $\chi(x,z;y) = \psi_{l}(x,y,z)$ has the independence property.  
\end{proof}

Recall that a formula $\varphi(x;a_{0})$ \emph{divides} over a set $A$ if there is an $A$-indiscernible sequence $\langle a_{i} : i < \omega \rangle$ such that $\{\varphi(x;a_{i}) : i < \omega\}$ is inconsistent.  A formula $\varphi(x;b)$ \emph{forks} over $A$ if $\varphi(x;b) \vdash \bigvee_{i < \kappa} \psi(x;a_{i})$ where each $\psi_{i}(x;a_{i})$ divides over $A$.  A type divides or forks over $A$ if it implies a formula that respectively divides or forks over $A$.  A theory is called \emph{simple} if there is a cardinal $\kappa$ such that, whenever $p$ is a type (in finitely many variables) over $A$, there is $B \subseteq A$ over which $p$ does not fork with $|B| < \kappa$.  The least such cardinal $\kappa$ is called $\kappa(T)$ and the least such regular cardinal is called $\kappa_{\mathbf{r}}(T)$.  

\begin{prop} \label{8-dividing implies forking}
If $\varphi(x;a)$ shreds over $A$ then $\varphi(x;a)$ forks over $A$.  
\end{prop}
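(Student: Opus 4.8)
The plan is to reduce to the explicit, finitary form of shredding, invoke the standard reformulation of forking in terms of dividing, and run a pigeonhole argument in the spirit of the proof of Proposition~\ref{NIP implies unshreddable} to extract a contradiction from a would‑be non‑forking witness.

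First, if $\varphi(x;a)$ is inconsistent it forks over $A$ trivially, so assume it is consistent. By Lemma~\ref{shredding equivalents} together with Lemma~\ref{no parameters}, $\varphi(x;a)$ explicitly $\lambda$‑shreds over $A$ for every infinite $\lambda$, witnessed by some $(\overline{b},n,\overline{\eta},\overline{\psi})$ in which the formulas of $\overline{\psi}$ are parameter‑free; fix a regular $\lambda>|T|+|A|$ and write $\overline{b}=\langle b_\alpha:\alpha<\lambda\rangle$. Using (the proof of) Corollary~\ref{indiscernible witness} I would arrange in addition that the block sequence $\langle B_\alpha:\alpha<\lambda\rangle$, $B_\alpha=(b_{2n\alpha},\dots,b_{2n\alpha+2n-1})$, is $Aa$‑indiscernible while $\overline{b}$ stays $A$‑indiscernible; writing $B^{\mathrm{lo}}$ and $B^{\eta_l}$ for the sub‑tuples of a $2n$‑tuple $B$ at coordinates $0,\dots,n-1$ and $\eta_l(0),\dots,\eta_l(n-1)$ and $\chi_l(x;B)=\bigl[\psi_l(x;B^{\mathrm{lo}})\leftrightarrow\neg\psi_l(x;B^{\eta_l})\bigr]$, the content of explicit shredding is that $\varphi(x;a)\vdash\bigvee_{l<k}\chi_l(x;B_\alpha)$ for every $\alpha<\lambda$.

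Now recall that $\varphi(x;a)$ forks over $A$ if and only if the partial type $\{\varphi(x;a)\}\cup\{\neg\vartheta(x;d):\vartheta(x;d)\text{ divides over }A\}$ is inconsistent. Suppose not, and let $c$ realise it, so $c\models\varphi(x;a)$ and no formula of $\mathrm{tp}(c/\mathbb{M})$ divides over $A$. For each $\alpha<\lambda$, since $c\models\bigvee_{l}\chi_l(x;B_\alpha)$, pick $l_\alpha<k$ and $t_\alpha\in\{0,1\}$ with $\models\psi_{l_\alpha}(c;B_\alpha^{\mathrm{lo}})^{t_\alpha}\wedge\psi_{l_\alpha}(c;B_\alpha^{\eta_{l_\alpha}})^{1-t_\alpha}$. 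By the pigeonhole principle and the regularity of $\lambda$ there are $l^*<k$, $t^*\in\{0,1\}$ and $S\subseteq\lambda$ with $|S|=\lambda$ so that $l_\alpha=l^*$ and $t_\alpha=t^*$ for all $\alpha\in S$; we may assume $\eta_{l^*}\neq\mathrm{id}$ (otherwise the corresponding disjunct is already inconsistent). Exactly as in the proof of Proposition~\ref{NIP implies unshreddable}, from the tuples $B_\alpha^{\mathrm{lo}}$ and $B_\alpha^{\eta_{l^*}}$ ($\alpha\in S$) one extracts a single $A$‑indiscernible sequence $\langle d_i:i<\lambda\rangle$, all of whose members have the same type over $A$, with $\models\psi_{l^*}(c;d_i)$ precisely when $i$ is even.

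The remaining step is to turn this alternation into a contradiction with the non‑dividing of $\mathrm{tp}(c/\mathbb{M})$, and this is where the real work lies. The idea is that a global type that does not divide over $A$ can be "inserted" into any $A$‑indiscernible sequence: applied to $\langle d_i\rangle$, non‑dividing of the relevant formulas of $\mathrm{tp}(c/\mathbb{M})$ should yield, by the usual compactness argument for extending indiscernible sequences over non‑dividing extensions, a realisation $c'$ of $\mathrm{tp}(c/Ad_0)$ over which an end segment of $\langle d_i\rangle$ is indiscernible; unwinding the extraction, this produces an end segment of $\overline{b}$ that is indiscernible over a realisation of $\varphi(x;a)$, contradicting the choice of $\overline{b}$ as a shredding witness. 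The delicate point — which I expect to be the main obstacle — is that non‑dividing of a single formula is not by itself strong enough to license such an insertion (this is precisely why shredding must be imposed uniformly along the whole sequence, and why $\langle B_\alpha\rangle$ was arranged to be $Aa$‑indiscernible, since a naive "insertion" fails for badly‑behaved $\overline{b}$ such as one along which a single formula alternates); so I anticipate needing to run the insertion argument with the full uniformity of explicit shredding built in, and possibly to iterate the pigeonhole step, rather than invoking an off‑the‑shelf extension lemma.
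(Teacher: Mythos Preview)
Your proposal has a genuine gap: the ``remaining step'' is the entire content of the proof, and you yourself flag that you do not know how to complete it. The difficulty is real and not merely technical. Having a realization $c\models\varphi(x;a)$ whose global type avoids every formula dividing over $A$, together with an $A$-indiscernible sequence $\langle d_i\rangle$ on which $\psi_{l^*}(c;d_i)$ alternates, does \emph{not} produce a contradiction: neither $\psi_{l^*}(x;d_0)$ nor $\psi_{l^*}(x;d_0)\wedge\neg\psi_{l^*}(x;d_1)$ can be shown to divide over $A$ (the latter is in fact realized by $c$ along the entire sequence of pairs $(d_{2i},d_{2i+1})$), and the ``insertion'' heuristic you invoke --- that a non-dividing type can be placed so as to leave an end segment indiscernible --- is not available in an arbitrary theory. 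Alternation along an $A$-indiscernible sequence gives IP, not a dividing formula; this is exactly the content of Proposition~\ref{NIP implies unshreddable}, and nothing more.

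The paper's argument is entirely different and constructive: it exhibits $\varphi(x;a)$ directly as a finite disjunction of formulas that divide over $A$. The two ideas you are missing are these. First, a finite Ramsey step: choosing $n_*$ with $n_*\to(2n)^n_{2^k}$, one shows (purely combinatorially) that any $c\models\varphi(x;a)$ must satisfy, for \emph{some} increasing $\nu\in{}^{2n}(n_*)$, the ``homogeneity'' formula
\[
\varphi'(x;\overline{b}_{0,\nu})\;=\;\bigwedge_{l<k}\bigl[\psi_l(x;b_{\nu(0)},\dots,b_{\nu(n-1)})\leftrightarrow\psi_l(x;b_{\nu(\eta_l(0))},\dots,b_{\nu(\eta_l(n-1))})\bigr],
\]
so $\varphi(x;a_0)\vdash\bigvee_\nu\varphi(x;a_0)\wedge\varphi'(x;\overline{b}_{0,\nu})$. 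Second, one builds by recursion an $A$-indiscernible sequence of pairs $\langle(a_\alpha,\overline{b}_\alpha)\rangle$ with $a_\alpha\equiv_{A\overline{b}_{<\alpha}}a$ and $\overline{b}_\alpha$ indiscernible over $Aa_\alpha$; the explicit-shredding implication then forces $\varphi(x;a_\alpha)\vdash\neg\varphi'(x;\overline{b}_{\beta,\nu_*})$ for $\beta<\alpha$ (where $\nu_*=(0,\dots,2n-1)$), so $\{\varphi(x;a_\alpha)\wedge\varphi'(x;\overline{b}_{\alpha,\nu_*}):\alpha<\lambda\}$ is $2$-inconsistent and the disjunct for $\nu_*$ divides. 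Finally, $Aa_0$-indiscernibility of $\overline{b}_0$ transports dividing from $\nu_*$ to every $\nu$. None of this is visible from the contradiction-with-non-dividing viewpoint you adopted.
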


\begin{proof}
Suppose $\lambda = (|T| + |A|)^{+}$ and, by Lemma \ref{shredding equivalents}, we know $\varphi(x;a)$ explicitly $\lambda$-shreds over $A$.  Hence, there are is an $A$-indiscernible sequence $\overline{b} = \langle b_{i} : i < \lambda \rangle$ such that that there is a sequence of $L(A)$-formulas $\langle \psi_{l}(x;y_{0},\ldots, y_{n-1}) : l < k \rangle$ and a sequence $\langle \eta_{l} : l < k \rangle$ with the property that that 
\begin{itemize}
\item [$(*)$] $\varphi(x;a) \vdash \bigvee_{l < k} \psi_{l}(x;b_{\delta},\ldots, b_{\delta + n-1}) \leftrightarrow \neg \psi_{l}(x;b_{\delta + \eta_{l}(0)},\ldots, b_{\delta + \eta_{l}(n-1)})$
\end{itemize}
for all $\delta < \lambda$ divisible by $2n$.  Given $\alpha < \lambda$, let $\overline{b}_{\alpha} = \langle b_{\omega \cdot \alpha + i} : i < \omega \rangle$.  By the proof of Corollary \ref{indiscernible witness}, we can moreover assume that $\langle \overline{b}_{\alpha} : \alpha < \lambda \rangle$ is an $Aa$-indiscernible sequence.  We will choose $(a_{\alpha})_{\alpha < \lambda}$ so that 
\begin{enumerate}
\item For all $\alpha < \lambda$, $a_{\alpha} \models \text{tp}(a/A\overline{b}_{<\alpha})$.
\item For all $\alpha < \lambda$, $\overline{b}_{\alpha}$ is an $a_{\alpha}A$-indiscernible sequence.
\end{enumerate}
Given $(a_{\beta})_{\beta < \alpha}$, to choose $a_{\alpha}$, first apply Ramsey and compactness to extract from $\overline{b}_{\alpha}$ a sequence $\overline{b}^{*}_{\alpha} = \langle b_{\omega \cdot \alpha + i}^{*} : i < \omega \rangle$ which is $Aa\overline{b}_{<\alpha}$-indiscernible.  Then as $\overline{b}_{\alpha} \equiv_{A\overline{b}_{<\alpha}} \overline{b}^{*}_{\alpha}$, we can choose $a_{\alpha}$ so that $a_{\alpha} \overline{b}_{\alpha} \equiv_{A\overline{b}_{<\alpha}} a \overline{b}^{*}_{\alpha}$.  The sequence $(a_{\alpha})_{\alpha < \lambda}$ satisfies both (1) and (2) by construction.  By Ramsey, compactness, and automorphism, we may moreover assume the sequence $\langle (a_{\alpha},\overline{b}_{\alpha}) : \alpha < \lambda \rangle$ is an $A$-indiscernible sequence.

By the finite Ramsey theorem, there is $n_{*}$ so that $n_{*} \to (2n)^{n}_{2^{k}}$.  Let $\Lambda = \{ \nu \in {^{2n}(n_{*})} : \nu \text{ increasing}\}$ and for $\nu \in \Lambda$, let $\overline{b}_{\alpha,\nu} = (b_{\omega \cdot \alpha + \nu(i)})_{i < 2n}$.  Let $\varphi'(x;\overline{b}_{\alpha,\nu})$ (suppressing parameters from $A$) denote the formula 
$$
\bigwedge_{l < k} \psi_{l}(x;b_{\omega \cdot \alpha + \nu(0)},\ldots, b_{\omega \cdot \alpha + \nu(n-1)}) \leftrightarrow \psi_{l}(x;b_{\omega \cdot \alpha + \nu(\eta_{l}(0))},\ldots, b_{\omega \cdot \alpha + \nu(\eta_{l}(n-1))}).
$$
Let $\varphi_{*}(x;a_{\alpha},\overline{b}_{\alpha,\nu})$ denote the formula $\varphi(x;a_{\alpha}) \wedge \varphi'(x;\overline{b}_{\alpha,\nu})$.  
\\
\\
\noindent \textbf{Claim 1}:  $\varphi(x;a_{0}) \vdash \bigvee_{\nu \in \Lambda} \varphi_{*}(x;a_{0},\overline{b}_{0,\nu})$.
\\
\\
\emph{Proof of claim}:  This proof is purely combinatorial and will not make use of (1), (2), or (*).  Let $c$ be any tuple with $\mathbb{M} \models \varphi(c;a_{0})$.  Given any increasing $\xi \in {^{n}(n_{*})}$, define $\chi(\xi) = \{l < k : \mathbb{M} \models \psi_{l}(c;b_{\xi(0)},\ldots, b_{\xi(n-1)},a'_{l})\}$.  This defines a coloring with $2^{k}$ possible colors.  As $n_{*} \to (2n)^{n}_{2^{k}}$, there is $\nu \in \Lambda$ so that $\nu$ is an increasing enumeration of a homogeneous subset of $n_{*}$ of size $2n$.  For each $l < k$, by homogeneity, both $(\nu(0),\ldots, \nu(n-1))$ and $(\nu(\eta_{l}(0)),\ldots, \nu(\eta_{l}(n-1)))$ take on the same value with respect to the coloring $\chi$, hence 
$$
\mathbb{M} \models \bigwedge_{l < k} \psi_{l}(c;b_{\nu(0)},\ldots, b_{\nu(n-1)}) \leftrightarrow \psi_{l}(c;b_{\nu(\eta_{l}(0))},\ldots, b_{\nu(\eta_{l}(n-1))}).
$$
This shows $\mathbb{M} \models \varphi_{*}(x;a_{0},\overline{b}_{0,\nu})$, proving the claim. \qed
\\
\\
\noindent \textbf{Claim 2}:  For each $\nu \in \Lambda$, $\varphi_{*}(x;a_{0},\overline{b}_{0,\nu})$ divides over $A$.  
\\
\\
\emph{Proof of claim}:  Let $\nu_{*} = (0,\ldots, 2n-1)$.  We will first show that $\varphi_{*}(x;a_{0},\overline{b}_{0,\nu_{*}})$ divides over $A$.  By $(*)$,
$$
\varphi(x;a) \vdash \neg \bigwedge_{l < k} \psi_{l}(x;b_{\omega \cdot \alpha},\ldots, b_{\omega \cdot \alpha + n-1}) \leftrightarrow \psi_{l}(x;b_{\omega \cdot \alpha + \eta_{l}(0)},\ldots, b_{\omega \cdot \alpha + \eta_{l}(n-1)}),
$$
and therefore $\varphi(x;a) \vdash \neg \varphi'(x;\overline{b}_{\alpha,\nu_{*}})$ for all $\alpha < \lambda$.  For all $\alpha$, we have $a_{\alpha} \equiv_{A\overline{b}_{<\alpha}} a$, so if $\beta < \alpha$, then $\varphi(x;a_{\alpha}) \vdash \neg \varphi'(x;\overline{b}_{\beta,\nu_{*}})$.  Therefore, when $\beta < \alpha$, we have 
$$
\varphi_{*}(x;a_{\alpha},\overline{b}_{\alpha,\nu_{*}}) \vdash \neg \varphi_{*}(x;a_{\beta},\overline{b}_{\beta,\nu_{*}}),
$$
from which it follows that $\{\varphi_{*}(x;a_{\alpha},\overline{b}_{\alpha,\nu_{*}}) : \alpha < \lambda\}$ is $2$-inconsistent.  Since $\langle (a_{\alpha},\overline{b}_{\alpha,\nu_{*}}) : \alpha < \lambda \rangle$ is an $A$-indiscernible sequence, we have shown $\varphi_{*}(x;a_{0},\overline{b}_{0,\nu_{*}})$ divides over $A$.  

Finally, as $\overline{b}_{0}$ is an $Aa_{0}$-indiscernible sequence, we have $\overline{b}_{0,\nu} \equiv_{Aa_{0}} \overline{b}_{0,\nu_{*}}$ for all $\nu \in \Lambda$.  It follows that $\varphi_{*}(x;a_{0},\overline{b}_{0,\nu})$ divides over $A$ for all $\nu \in \Lambda$.  This proves the claim and therefore proves the proposition, by Claim 1.  
\end{proof}

As a corollary, we obtain the following:

\begin{prop} \label{simple implies unshreddable}
If $T$ is simple, then $\kappa_{\mathrm{shred}}(T) \leq \kappa_{\mathbf{r}}(T)$.  
\end{prop}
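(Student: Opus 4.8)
The plan is to show that, for each $m < \omega$, the regular cardinal $\kappa_{\mathbf{r}}(T)$ already witnesses the defining property of $\kappa^{m}_{\mathrm{shred}}(T)$, and then take the supremum over $m$. Concretely, writing $\kappa = \kappa_{\mathbf{r}}(T)$ (a genuine regular cardinal, as $T$ is simple), I would aim to rule out the existence of an increasing continuous sequence of models $\langle M_{i} : i \leq \kappa \rangle$ with $M_{\kappa} = \bigcup_{i<\kappa} M_{i}$ and a type $p \in S^{m}(M_{\kappa})$ such that $p \upharpoonright M_{i+1}$ shreds over $M_{i}$ with a built-in witness for every $i < \kappa$. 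Ruling this out gives $\kappa^{m}_{\mathrm{shred}}(T) \leq \kappa$ by the minimality in the definition of $\kappa^{m}_{\mathrm{shred}}(T)$, and hence $\kappa_{\mathrm{shred}}(T) = \sup_{m} \kappa^{m}_{\mathrm{shred}}(T) \leq \kappa_{\mathbf{r}}(T)$.

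So suppose toward a contradiction that such a sequence exists. The first step is to convert shredding into forking along the chain: fixing $i < \kappa$, the hypothesis that $p \upharpoonright M_{i+1}$ shreds over $M_{i}$ means that $p \upharpoonright M_{i+1}$ implies some formula $\varphi_{i}(x;a_{i})$ (with parameters $a_{i}$ from $M_{i+1}$) that shreds over $M_{i}$; by Proposition \ref{8-dividing implies forking} this formula forks over $M_{i}$, and since $p \vdash p \upharpoonright M_{i+1} \vdash \varphi_{i}(x;a_{i})$, the type $p$ forks over $M_{i}$. (Note that the built-in witness plays no role in this step.) Thus $p$ forks over $M_{i}$ for every $i < \kappa$.

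The second step is to contradict this using local character of non-forking. Since $p$ is a type in finitely many variables over the set $M_{\kappa}$, the definition of $\kappa_{\mathbf{r}}(T) = \kappa$ supplies a set $B \subseteq M_{\kappa}$ with $|B| < \kappa$ over which $p$ does not fork. As $\kappa$ is regular and $\langle M_{i} : i < \kappa \rangle$ is increasing and continuous with union $M_{\kappa}$, each element of $B$ lies in some $M_{i}$, and since $|B| < \mathrm{cf}(\kappa) = \kappa$ the supremum $i^{*}$ of these indices is $< \kappa$, so $B \subseteq M_{i^{*}}$. Now I would invoke monotonicity of non-forking in the base (a standard consequence of transitivity of non-forking in simple theories): from the facts that $p$ does not fork over $B$ and $B \subseteq M_{i^{*}} \subseteq M_{\kappa}$ it follows that $p$ does not fork over $M_{i^{*}}$, contradicting the conclusion of the first step applied at $i = i^{*}$.

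The only point that requires input beyond the definitions in the text is the last one, base monotonicity of non-forking in simple theories; it can be cited from the standard development (e.g.\ Kim--Pillay), or one can instead observe that the local-character definition of $\kappa_{\mathbf{r}}(T)$ given here is equivalent, for regular cardinals, to the nonexistence of forking chains of that length, which is precisely what is being exploited. Everything else --- the reduction to a single $m$, the bookkeeping with $p \upharpoonright M_{i+1}$ and Proposition \ref{8-dividing implies forking}, and the cofinality argument locating $B$ inside some $M_{i^{*}}$ --- is routine.
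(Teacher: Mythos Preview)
Your proposal is correct and follows essentially the same approach as the paper: both arguments use Proposition~\ref{8-dividing implies forking} to convert the shredding chain into a forking chain, invoke local character of non-forking to find a small base $B \subseteq M_{\kappa}$, use regularity of $\kappa$ to locate $B$ inside some $M_{i}$, and then appeal to base monotonicity of non-forking to derive a contradiction. The only cosmetic difference is that the paper argues by contradiction from $\kappa_{\mathbf{r}}(T) < \kappa_{\mathrm{shred}}(T)$ and picks an arbitrary regular $\kappa$ in between, whereas you work directly with $\kappa = \kappa_{\mathbf{r}}(T)$ and each $m$ separately.
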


\begin{proof}
Suppose not, so $\kappa_{\mathbf{r}}(T) < \kappa_{\mathrm{shred}}(T)$.  Let $\kappa = \text{cf}(\kappa) \geq \kappa_{\mathbf{r}}(T)$ with $\kappa < \kappa_{\mathrm{shred}}(T)$.  Then we have the following:
\begin{itemize} 
\item $\langle M_{i} : i \leq \kappa \rangle$ is an increasing sequence of models of $T$.
\item $p(x) = \{\varphi(x;a_{i}) : i < \kappa\}$ is a consistent partial type.
\item $\varphi(x;a_{i})$ shreds over $M_{i}$.
\item $a_{i}\in M_{i+1}$.
\end{itemize}
Then by Proposition \ref{8-dividing implies forking}, $p$ forks over $M_{i}$ for all $i < \kappa$.  Let $M_{\kappa} = \bigcup_{i < \kappa} M_{i}$.  As $T$ is simple, there is subset $A \subseteq M_{\kappa}$ with $|A| < \kappa_{\mathbf{r}}(T)$ such that $p$ does not fork over $A$.  As $\kappa$ is regular, there is some $i < \kappa$ so that $A \subseteq M_{i}$, from which it follows that $p$ does not fork over $M_{i}$ as well, a contradiction to the definition of $\kappa_{\mathbf{r}}(T)$.  
\end{proof}

\begin{cor} \label{contains NIP and simple}
The class of unshreddable theories contains the NIP and simple theories.  
\end{cor}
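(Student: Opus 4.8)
The plan is to treat the simple and NIP theories separately, each time reading off the conclusion from one of the two propositions immediately preceding the corollary. For simple $T$ there is nothing to do: simplicity of $T$ means $\kappa_{\mathbf r}(T)$ is a genuine (set-sized) cardinal, and Proposition \ref{simple implies unshreddable} gives $\kappa_{\mathrm{shred}}(T) \le \kappa_{\mathbf r}(T) < \infty$, so $T$ is unshreddable by definition.

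For NIP $T$ I would argue by contraposition using Proposition \ref{NIP implies unshreddable}. Suppose toward a contradiction that $T$ is NIP yet $\kappa^{m}_{\mathrm{shred}}(T) = \infty$ for some $m < \omega$. Unwinding the definition, there is then no regular $\kappa$ for which the relevant configuration fails to exist; in particular, taking $\kappa = \aleph_{0}$, we obtain an increasing continuous chain of models $\langle M_{i} : i \le \aleph_{0} \rangle$ with $M_{\aleph_{0}} = \bigcup_{i < \aleph_{0}} M_{i}$ and a type $p \in S^{m}(M_{\aleph_{0}})$ such that $p \upharpoonright M_{i+1}$ shreds over $M_{i}$ with a built-in witness for every $i < \aleph_{0}$. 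Applying this with $i = 0$, the type $p \upharpoonright M_{1}$ shreds over $M_{0}$, hence implies some formula $\varphi(x;a)$ that shreds — that is, $\lambda$-shreds with $\lambda = (|T| + |M_{0}|)^{+}$ — over $M_{0}$. Since $\varphi(x;a)$ is implied by the consistent type $p \upharpoonright M_{1}$ it is itself consistent, and $\lambda = \mathrm{cf}(\lambda) > |T| + |M_{0}|$, so Proposition \ref{NIP implies unshreddable} forces $T$ to have the independence property, contradicting NIP. Hence $\kappa^{m}_{\mathrm{shred}}(T) < \infty$ for all $m$, and $\kappa_{\mathrm{shred}}(T) = \sup_{m} \kappa^{m}_{\mathrm{shred}}(T) < \infty$, i.e., $T$ is unshreddable.

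I do not anticipate any real obstacle here: both halves are essentially restatements of the two propositions just proved. The only points needing a moment's care are (i) extracting from the equality $\kappa^{m}_{\mathrm{shred}}(T) = \infty$ a single concrete instance of a type shredding over a model — for which the smallest regular cardinal suffices — and (ii) checking that the cardinal $\lambda = (|T| + |M_{0}|)^{+}$ attached to the word ``shreds'' via convention (4) of the definition is regular and strictly larger than $|T| + |M_{0}|$, so that the hypothesis of Proposition \ref{NIP implies unshreddable} genuinely applies. One could alternatively phrase the NIP half without contradiction: Proposition \ref{NIP implies unshreddable} directly says that over NIP $T$ no consistent formula shreds over any parameter set, so no type over a model shreds over a submodel, whence already $\kappa^{m}_{\mathrm{shred}}(T) = \aleph_{0}$ for every $m$.
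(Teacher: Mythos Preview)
Your proposal is correct and follows essentially the same approach as the paper, which simply records that the result is immediate from Propositions \ref{NIP implies unshreddable} and \ref{simple implies unshreddable}. Your write-up just makes the deductions explicit; the cleanest version is the one you sketch at the end, observing that under NIP no consistent formula shreds over any set, so $\kappa^{m}_{\mathrm{shred}}(T) = \aleph_{0}$ for every $m$ and hence $\kappa_{\mathrm{shred}}(T) = \aleph_{0}$ (this also handles the passage from ``each $\kappa^{m}_{\mathrm{shred}}(T) < \infty$'' to ``$\sup_{m}\kappa^{m}_{\mathrm{shred}}(T) < \infty$'', which your contrapositive formulation leaves slightly implicit).
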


\begin{proof}
This follows immediately from Proposition \ref{NIP implies unshreddable} and Proposition \ref{simple implies unshreddable}.  
\end{proof}

%\begin{rem}
%The invariant $\kappa_{\mathrm{shred}}(T)$ is defined in terms of chains of instances of \emph{explicit} shredding.  It is also possible to drop the requirement that the shredding be explicit, and the theories with a bound on (possibly not explicit) shredding chains defines a subclass of the unshreddable theories that also contains the NIP and simple theories.  
%\end{rem}

\section{Respect and exact saturation}

\subsection{Respect} \label{respect subsection}

For the entirety of this subsection, we fix a singular cardinal $\mu$.  Writing $\text{cf}(\mu) = \kappa$, we will assume there is an increasing and continuous sequence of cardinals $\overline{\lambda} = \langle \lambda_{i} : i \leq \kappa \rangle$ such that $\lambda_{0} > \kappa$, $\lambda_{i+1}$ is regular for all $i < \kappa$, and $\lambda_{\kappa} = \mu$.  We will assume we have fixed for each $i < \kappa$ a sequence $\overline{a}_{i} = \langle a_{i,j} : j < \lambda_{i+1} \rangle$, which is $\overline{a}_{<i}$-indiscernible.  Additionally, we will assume that $T$ is a theory with $\kappa^{1}_{\mathrm{shred}}(T) \leq \kappa $.

\begin{defn} \label{respectdef}
Suppose $i < \kappa$ and $A$ is a set of parameters. %(LETS AGREE TO DROP THIS REQUIREMENT:  with $\overline{a}_{i} \subseteq A$.  )
\begin{enumerate}
\item  We say that $A$ \emph{respects} $\overline{a}_{i}$ when for any finite subset $C \subseteq A$, there is $\alpha < \lambda_{i+1}$ such that $\overline{a}_{i,\geq \alpha}$ is $C$-indiscernible. 
\item We say $p \in S^{<\omega}(A)$ \emph{respects} $\overline{a}_{i}$ when, for every $c \models p$, the set $Ac$ respects $\overline{a}_{i}$.  
\end{enumerate}
\end{defn}

\begin{rem} \label{finite to infinite}
In Definition \ref{respectdef}(1), by the regularity of $\lambda_{i+1}$, we could have equivalently asked for the existence of such an $\alpha < \lambda_{i+1}$ for any $C \subseteq A$ with $|C| < \lambda_{i+1}$, since there are fewer than $\lambda_{i+1}$ finite subsets of any such $C$. 
\end{rem}

\begin{defn} \label{K def}
We define $\mathbb{K}$ to be the class of $\overline{A}$ such that:
\begin{enumerate}
\item $\overline{A} = \langle A_{i} : i \leq \kappa \rangle$ is increasing continuous.
\item $|A_{i}| = \lambda_{i}$ for all $i < \kappa$.
\item $\overline{a}_{i} \subseteq A_{i+1}$ for all $i < \kappa$.
%\item $A_{i}\cup A_{\mathrm{min},i+1}$ respects $\overline{a}_{i}$ for all $i < \kappa$.  
\item $A_{i}$ respects $\overline{a}_{i}$ for all $i < \kappa$, i.e. there is some $\alpha < \lambda_{i+1}$ such that $\overline{a}_{i, \geq \alpha}$ is $A_{i}$-indiscernible, using Remark \ref{finite to infinite}.    
\end{enumerate}
Given $\overline{A},\overline{B} \in \mathbb{K}$, we say $\overline{A} \leq_{\mathbb{K}} \overline{B}$ if $A_{j} \subseteq B_{j}$ for all $j < \kappa$.  We say $\overline{A} \leq_{\mathbb{K},i} \overline{B}$ if $A_{j} \subseteq B_{j}$ for all $j$ satisfying $i \leq j < \kappa$ and $\overline{A} \leq_{\mathbb{K},*} \overline{B}$ if $\overline{A} \leq_{\mathbb{K},i} \overline{B}$ for some $i < \kappa$.  We may omit the $\mathbb{K}$ subscript when it is clear from context.  
%The class $\mathbb{K}'$ is the class of $\overline{A} \in \mathbb{K}$ such that for some $i_{0} \in \text{Succ}(\kappa)$, $A_{i} = \bigcup \overline{a}_{<i}$ for all $i \leq i_{0}$ and $A_{i} \prec \mathbb{M}$ for all $i$ with $i_{0} < i < \kappa$.  Because the elements of the sequence are eventually models, we typically denote an element of $\mathbb{K}'$ by $\overline{M}$.  
\end{defn}

\begin{lem} \label{density replacement}
Suppose $p$ is a partial 1-type over $A_{\kappa}$ with $|\text{dom}(p)| \leq \lambda_{i}$ for some $i < \kappa$.  Then there are $i'$ with $i \leq i' < \kappa$ and $p' \supseteq p$ with $|\text{dom}(p')| \leq \lambda_{i'}$ such that, if $q$ is a type over $A_{\kappa}$ extending $p'$, then $q$ does not shred over $A_{i'}$.  
\end{lem}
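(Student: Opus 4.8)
The plan is a proof by contradiction: assuming the conclusion fails for the given $p$, I will build a chain of length $\kappa$ witnessing $\kappa^{1}_{\mathrm{shred}}(T) > \kappa$, contradicting the standing hypothesis $\kappa^{1}_{\mathrm{shred}}(T) \leq \kappa$. The first step is to record the negation in an iterable form. If the conclusion fails, then for every $i'$ with $i \leq i' < \kappa$ and every consistent partial type $p'$ with parameters in $A_{\kappa}$ satisfying $p' \supseteq p$ and $|\mathrm{dom}(p')| \leq \lambda_{i'}$, there is a formula $\psi(x;\overline{b})$ over $A_{\kappa}$ such that $p' \cup \{\psi(x;\overline{b})\}$ is consistent and $\psi(x;\overline{b})$ shreds over $A_{i'}$. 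Indeed, by the assumed failure there is a type $q$ over $A_{\kappa}$ extending $p'$ that shreds over $A_{i'}$, so $q$ implies some such $\psi(x;\overline b)$, and $\psi(x;\overline b)$ is consistent with $p'$ since $q \vdash p' \cup \{\psi(x;\overline b)\}$.

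Next I would construct, by recursion on $j < \kappa$, an increasing chain of consistent partial types $\langle p_{j} : j < \kappa \rangle$ with $\mathrm{dom}(p_{j}) \subseteq A_{\kappa}$ such that $p_{0} = p$, $|\mathrm{dom}(p_{j})| \leq \lambda_{i+j}$, and $p_{j+1}$ shreds over $A_{i+j}$. At a successor stage, apply the statement of the previous paragraph with $i' = i+j$ and $p' = p_{j}$, and set $p_{j+1} = p_{j} \cup \{\psi_{j}(x;\overline{b}_{j})\}$ for the resulting formula; the cardinality bound persists because $\overline{b}_{j}$ is finite and $\lambda_{i+j+1} > \lambda_{i+j}$. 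At a limit stage $\delta$, put $p_{\delta} = \bigcup_{j < \delta} p_{j}$, which is consistent, with $|\mathrm{dom}(p_{\delta})| \leq |\delta| \cdot \sup_{j < \delta} \lambda_{i+j} = \lambda_{i+\delta}$, using continuity of $\overline{\lambda}$ together with $\lambda_{0} > \kappa > |\delta|$. Finally set $r = \bigcup_{j < \kappa} p_{j}$, a consistent partial type over $A_{\kappa}$, and extend it to some complete $\overline{r} \in S^{1}(A_{\kappa})$.

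To finish, I would put $B_{j} = A_{i+j}$ for $j < \kappa$ and $B_{\kappa} = A_{\kappa}$. Since $\kappa = \mathrm{cf}(\mu)$ is regular, $\langle i+j : j < \kappa \rangle$ is increasing, continuous, and cofinal in $\kappa$, so, using that $\overline{A}$ is increasing continuous, $\langle B_{j} : j \leq \kappa \rangle$ is increasing with $B_{\kappa} = \bigcup_{j < \kappa} B_{j}$; and for each $j < \kappa$ we have $\overline{r} \supseteq p_{j+1}$, so $\overline{r}$ shreds over $A_{i+j} = B_{j}$. This is exactly condition (1) of Proposition \ref{the main equivalence prop} with $m = 1$ and the regular cardinal $\kappa$, so condition (2) of that proposition provides an increasing continuous sequence of models $\langle M_{j} : j \leq \kappa \rangle$ with $M_{\kappa} = \bigcup_{j < \kappa} M_{j}$ and a type in $S^{1}(M_{\kappa})$ whose restriction to $M_{j+1}$ shreds over $M_{j}$ with a built-in witness, for all $j < \kappa$. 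Restricting this sequence and this type to the initial segment of length $\kappa^{1}_{\mathrm{shred}}(T)$ (a limit ordinal which is $\leq \kappa$, so continuity is inherited) then contradicts the definition of $\kappa^{1}_{\mathrm{shred}}(T)$.

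The step I expect to require the most care is essentially bookkeeping: controlling $|\mathrm{dom}(p_{\delta})|$ at limit stages, which is precisely where the hypothesis $\lambda_{0} > \kappa$ is used, and checking that the telescoped sequence $\langle B_{j} \rangle$ recovers all of $A_{\kappa}$, which is where regularity of $\mathrm{cf}(\mu)$ enters. Beyond this there is no real conceptual difficulty: the substance is already packaged into Proposition \ref{the main equivalence prop} and into the standing hypothesis $\kappa^{1}_{\mathrm{shred}}(T) \leq \kappa$, and once the local failure has been iterated into a $\kappa$-chain that shreds at every level, the contradiction is immediate.
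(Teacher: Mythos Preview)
Your proof is correct and follows essentially the same approach as the paper's: assume failure, iterate the negation to build a $\kappa$-length chain of partial types each shredding over the corresponding $A$-set, and invoke Proposition~\ref{the main equivalence prop} to contradict $\kappa^{1}_{\mathrm{shred}}(T) \leq \kappa$. The only cosmetic differences are that the paper uses a flexible index sequence $\langle i_{j} \rangle$ (with $i_{\alpha} = \sup_{j<\alpha} i_{j} + 1$) and enlarges each $p_{j}$ to have domain containing $A_{i_{j}}$, whereas you use the fixed indexing $i+j$ and add only a single formula at each successor step; both bookkeeping choices work, and your explicit handling of the restriction to an initial segment of length $\kappa^{1}_{\mathrm{shred}}(T)$ is a detail the paper leaves implicit.
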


\begin{proof}
Suppose not.  Then we will construct an increasing sequence of types $\langle p_{j} : j < \kappa \rangle$ extending $p$ and an increasing sequence of ordinals $\langle i_{j} : j < \kappa \rangle$ such that $|\mathrm{dom}(p_{j})| = \lambda_{i_{j}}$ and $p_{j}$ shreds over $A_{i_{j}}$ for all $j < \kappa$.  To begin, we set $i_{0} = i$ and use our assumption to find some $p_{0} \supseteq p$ such that $p_{0}$ shreds over $A_{i_{0}}$.  We may assume $\text{dom}(p_{0})$ contains $A_{i_{0}}$ and has cardinality $\lambda_{i_{0}}$.  Given any $\langle p_{j} : j < \alpha \rangle$ and $\langle i_{j} : j < \alpha \rangle$ for $\alpha \geq 1$, we put $p' = \bigcup_{j < \alpha} p_{j}$ and $i' = \sup_{j < \alpha} i_{j}$ (here we make use of the fact that $\kappa$ is regular).  Then $|\mathrm{dom}(p')| = \lambda_{i'}$ and $p'$ extends $p$.  Let $i_{\alpha} = i'+1$.  As $i_{\alpha} \geq i$, by hypothesis, there is some type $p_{\alpha} \supseteq p'$ such that $p_{\alpha}$ shreds over $A_{i'+1}$.  As this will be witnessed by a single formula, we may assume $\mathrm{dom}(p_{\alpha})$ contains $A_{i_{\alpha}}$ and $|\mathrm{dom}(p_{\alpha})| = \lambda_{i_{\alpha}}$, completing the induction.

Let $p_{*} = \bigcup_{j < \kappa} p_{j}$.  Then, by construction, we have $p_{*}$ shreds over $A_{i_{j}}$ for all $j < \kappa$.  By Proposition \ref{the main equivalence prop}, this contradicts $\kappa^{1}_{\mathrm{shred}}(T) \leq \kappa$.  
\end{proof}

\begin{lem} \label{the main lemma really}
If $\overline{A} \in \mathbb{K}$ and $p$ is a $1$-type over $A_{\kappa}$ with $|\mathrm{dom}(p)|< \mu$, then there is $\overline{A}' \in \mathbb{K}$ such that $\overline{A} \leq _{\mathbb{K}} \overline{A}'$ and some $c \in A'_{\kappa}$ realizes $p(x)$.  
\end{lem}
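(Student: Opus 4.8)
The plan is to realize $p$ inside the set obtained from $\overline A$ by adjoining a single, carefully chosen realization $c$ of $p$ to every sufficiently high level of the chain. First I would reduce to the case $|\mathrm{dom}(p)|\le\lambda_{i_0}$ for some $i_0<\kappa$, which is possible since $\mathrm{cf}(\mu)=\kappa$ and $\overline\lambda$ is cofinal in $\mu$. Then I would apply Lemma \ref{density replacement} to obtain $i_1$ with $i_0\le i_1<\kappa$ and a partial type $p'\supseteq p$ over $A_\kappa$ with $|\mathrm{dom}(p')|\le\lambda_{i_1}$ such that no type over $A_\kappa$ extending $p'$ shreds over $A_{i_1}$; by Corollary \ref{base monotonicity}, no such type shreds over $A_j$ for any $j$ with $i_1\le j<\kappa$. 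The virtue of passing to such a $p'$ is that we may extend it further --- in particular by indiscernibility conditions --- without spoiling non-shredding, since every completion of the extended type is still a completion of $p'$.

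The heart of the matter is to produce a single $c\models p'$ such that for every $j$ with $i_1<j<\kappa$ the set $A_j\cup\{c\}$ respects $\overline a_j$, i.e.\ (Remark \ref{finite to infinite}) such that some tail of $\overline a_j$ is $(A_j\cup\{c\})$-indiscernible; granted this, the construction below will stay in $\mathbb K$. Fix for each such $j$ an ordinal $\beta_j<\lambda_{j+1}$ with $\overline a_{j,\ge\beta_j}$ being $A_j$-indiscernible (Definition \ref{K def}(4)). By recursion on $j$ I would choose $\gamma_j\in[\beta_j,\lambda_{j+1})$ so that the increasing chain of partial types
$$P_{<j}(x):=p'(x)\cup\bigcup_{i_1<j'<j}\bigl\{\text{``}\overline a_{j',\ge\gamma_{j'}}\text{ is }(A_{j'}\cup\{x\})\text{-indiscernible''}\bigr\}$$
remains consistent (at limit $j$ this is automatic by compactness). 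Suppose at some stage $j$ no suitable $\gamma_j$ exists, so $P_{<j}(x)$ together with the indiscernibility conditions for $\overline a_{j,\ge\gamma}$ is inconsistent for every $\gamma\in[\beta_j,\lambda_{j+1})$; by compactness this gives, for each such $\gamma$, a formula $\varphi_\gamma(x;a_\gamma)$ implied by $P_{<j}$ which implies a finite disjunction of ``defects'' $\chi_l(x;\overline a_{j,\bar s_l})\not\leftrightarrow\chi_l(x;\overline a_{j,\bar t_l})$ with each $\chi_l\in L(A_j)$ and all listed indices in $[\gamma,\lambda_{j+1})$. Since $\lambda_{j+1}$ is regular and exceeds $|P_{<j}|+|L(A_j)|$, pigeonhole yields one formula $\varphi_*(x;a_*)$ implied by $P_{<j}$ and a set $G\subseteq[\beta_j,\lambda_{j+1})$ of size $\lambda_{j+1}$, which we may thin to be ``block-separated'', such that the disjunction of defects forced by $\varphi_*$ has a shape independent of $\gamma\in G$. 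Laying these defects out block by block along a subsequence $\overline b$ of the $A_j$-indiscernible sequence $\overline a_{j,\ge\beta_j}$ --- padding the blocks so that the rigid index pattern of condition (2) of Lemma \ref{finitization} is matched, using the dummy variables and the factor of two in $\eta\in{}^{n}(2n)$ exactly as in the proof of that lemma --- shows that $\varphi_*(x;a_*)$ shreds over $A_j$. But then any completion of $P_{<j}$ over $A_\kappa$, being a completion of $p'$, shreds over $A_j$, contradicting the choice of $p'$. Hence every $\gamma_j$ exists, $P_{<\kappa}$ is consistent, and any $c\models P_{<\kappa}$ is as required.

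Finally I would put $A'_j:=A_j$ for $j\le i_1$ and $A'_j:=A_j\cup\{c\}$ for $i_1<j\le\kappa$, and verify $\overline A'\in\mathbb K$ and $\overline A\le_{\mathbb K}\overline A'$. Increasing continuity, $|A'_j|=\lambda_j$, and $\overline a_j\subseteq A_{j+1}\subseteq A'_{j+1}$ are immediate; the levels $j\le i_1$ satisfy the ``respects'' clause because $\overline A\in\mathbb K$, while for $i_1<j<\kappa$ we have $A'_j=A_j\cup\{c\}$ and some tail of $\overline a_j$ is $(A_j\cup\{c\})$-indiscernible by the choice of $c$ and $\gamma_j$, so $A'_j$ respects $\overline a_j$ by Remark \ref{finite to infinite}. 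Since $c\in A'_\kappa$ and $c\models p'\supseteq p$, this completes the proof. The step I expect to be the main obstacle is the combinatorial bookkeeping in the middle paragraph --- extracting from the defect disjunctions attached to $G$ a single formula whose defects align with the ``block plus $\eta$-shift'' template of Lemma \ref{finitization}, so that shredding over $A_j$ is genuinely witnessed; everything else is routine verification.
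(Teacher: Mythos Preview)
Your proposal is correct and follows essentially the same route as the paper: apply Lemma~\ref{density replacement} to pass to a non-shredding extension, then build by induction along $[i_1,\kappa)$ a type that forces a tail of $\overline a_j$ to be indiscernible over $A_j$ together with the realization, and finally adjoin a single realization to all high levels. The paper carries the induction with complete types $p_j\in S^1(A_j)$ rather than your partial types $P_{<j}$, but this is a cosmetic difference.

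One remark: the ``main obstacle'' you flag is illusory. You do not need to massage the defects into the block-plus-$\eta$-shift template of Lemma~\ref{finitization}. Once pigeonhole gives you a single $\varphi_*(x;a_*)$ implied by $P_{<j}$ together with a cofinal $G\subseteq[\beta_j,\lambda_{j+1})$ such that for each $\gamma\in G$ the formula $\varphi_*$ implies a defect with indices $\ge\gamma$, you are already done: for \emph{every} $\alpha<\lambda_{j+1}$ pick $\gamma\in G$ with $\gamma\ge\alpha$, and the corresponding defect shows no $c\models\varphi_*$ can have $\overline a_{j,\ge\alpha}$ indiscernible over $A_jc$. That is literally the definition of $\varphi_*$ $\lambda_{j+1}$-shredding over $A_j$, witnessed by the $A_j$-indiscernible sequence $\overline a_{j,\ge\beta_j}$. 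The paper's proof invokes this in one line (``by compactness and the fact that $A_j$ respects $\overline a_j$''), and you may do the same.
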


\begin{proof}
By Lemma \ref{density replacement} and the choice of $\mu$, we may extend $p$ to a type $p'$ such that, for some $i < \kappa$, $|\mathrm{dom}(p')| \leq \lambda_{i}$  and no type extending $p'$ over $A_{\kappa}$ shreds over $A_{i}$, and hence does not shred over $A_{i'}$ for any $i' \geq i$ by base monotonicity.  Without loss of generality, we may assume $p = p'$.  

By induction on $j \in [i,\kappa]$, we will define types $p_{j} \in S^{1}(A_{j})$ so that 
\begin{enumerate}
\item The types $p_{j}$ are increasing with $j$.  
\item For all $j \in [i,\kappa)$, $p_{j} \cup p$ is consistent.  
\item For all $j \in [i,\kappa)$, if $c \models p_{j+1}$, then for some $\alpha < \lambda_{j+1}$, $\overline{a}_{j, \geq \alpha}$ is $A_{j}c$-indiscernible.  
\end{enumerate}
Let $p_{i} \in S^{1}(A_{i})$ be any type consistent with $p$.  Given $p_{j}$, we note that $p \cup p_{j}$ extends $p$ and therefore does not explicitly shred over $A_{j}$.  Because $|p \cup p_{j}| < \lambda_{j+1}$, by compactness and the fact that $A_{j}$ respects $\overline{a}_{j}$, there is a realization $c \models p \cup p_{j}$ and $\alpha < \lambda_{j+1}$ such that $\overline{a}_{j, \geq \alpha}$ is $A_{j}c$-indiscernible.  We put $p_{j+1} = \text{tp}(c/A_{j+1})$.    Finally, given $\langle p_{j} : j \in [i, \delta) \rangle$ for $\delta$ limit $> i$, we set $p_{\delta} = \bigcup_{j \in [i, \delta)} p_{j}$.  

Define $p_{\kappa} = \bigcup_{j \in [i,\kappa)} p_{j}$.  Let $c$ realize $p_{\kappa}$ and define $\overline{A}_{*}$ by $A^{*}_{j} = A_{j}$ for all $j < i+1$ and $A^{*}_{j} = A_{j} c$ for all $j\geq i+1$.  For all $j \in [i,\kappa)$, as $c$ realizes $p_{j+1}$, we know there is $\alpha < \lambda_{j+1}$ such that $\overline{a}_{j, \geq \alpha}$ is $cA_{j}$-indiscernible.  It follows that $\overline{A}_{*} \in \mathbb{K}$, completing the proof.  
\end{proof}

%\section{The PC-stuff}
%
%Big types have big completions:
%
%\begin{lem}
%If $N \prec_{L_{1}} \mathbb{M}_{1}$ and $p$ is a partial type over $N \supseteq A_{*}$ that is big over $A_{*}$, then there is $p_{1} \in S(N)$ extending $p$ that is big over $A_{*}$.
%\end{lem}
%
%\begin{proof}
%Let $\langle q_{\alpha} : \alpha < \alpha_{*} \rangle$ list all $q \in S_{L_{1}}(N)$ extending $p$ such that $q \upharpoonright L_{0}$ does not fork over $A_{*}$.  If for some $\alpha < \alpha_{*}$, $q_{\alpha}$ is big over $A_{*}$ we're done, so suppose not.  Then for each $\alpha < \alpha_{*}$, there is a model $N_{\alpha}$ with $N \prec_{L_{0}} N_{\alpha}$ and $r_{\alpha} \in S_{L_{0}}(N_{\alpha})$ that extends $q_{\alpha} \upharpoonright L_{0}$ and such that $r_{\alpha} \cup q_{\alpha}$ is inconsistent.  Inductively applying left-extension of $\ind^{u}$, we may choose models $N'_{\alpha}$ so that $\text{tp}_{L_{1}}(N'_{\alpha}/N) = \text{tp}_{L_{1}}(N_{\alpha}/N)$ and $N'_{\alpha} \ind^{u}_{N} N'_{< \alpha}$ for all $\alpha < \alpha_{*}$.  For each $\alpha < \alpha_{*}$, we can choose an automorphism $\sigma_{\alpha} \in \text{Aut}_{L_{1}}(\mathbb{M}_{1}/N)$ so that $\sigma_{\alpha}(N_{\alpha}) = N'_{\alpha}$ and then set $r'_{\alpha} = \sigma_{\alpha}(r_{\alpha}) \in S_{L_{0}}(N'_{\alpha})$.   
%
%As $p$ is big over $N$, there is $q \in S_{L_{0}}(N)$ that extends $p$, does not fork over $A_{*}$, and has the property that whenever $N \prec_{L_{0}} N_{*}$ and $q' \in S_{L_{0}}(N_{*})$ extends $q$ and does not fork over $A$ then $q' \cup p$ is consistent.  
%\end{proof}

\subsection{A one variable theorem}

\begin{thm} \label{one var thm}
For all $m$, we have $\kappa^{m}_{\mathrm{shred}}(T) = \kappa^{1}_{\mathrm{shred}}(T)$.  
\end{thm}

\begin{proof}
The inequality $\kappa^{m}_{\mathrm{shred}}(T) \geq \kappa^{1}_{\mathrm{shred}}(T)$ is clear, so it suffices to show $\kappa^{1}_{\mathrm{shred}}(T)  \geq \kappa^{m}_{\mathrm{shred}}(T) $.  Suppose $\kappa \geq \kappa^{1}_{\mathrm{shred}}(T)$ is a regular cardinal, $\langle \lambda_{i} : i < \kappa \rangle$ is an increasing continuous sequence of cardinals with $\lambda_{0} > \kappa + |T|$ and $\lambda_{i+1}$ regular for all $i < \kappa$.  Let $\mu = \sup_{i < \kappa} \lambda_{i}$.  Note $\mu > \kappa$.  

We will prove by induction on $m$ that, if $\kappa < \kappa^{m}_{\mathrm{shred}}(T)$, there is an increasing and continuous sequence of sets $\langle B_{i} : i \leq \kappa \rangle$ and $q(y) \in S_{1}(B_{\kappa})$ such that $q \upharpoonright B_{i+1}$ shreds over $B_{i}$.  This contradictions our assumption that $\kappa \geq \kappa^{1}_{\mathrm{shred}}(T)$, by Proposition \ref{the main equivalence prop}.  

When $m = 1$, we immediately have a contradiction since $\kappa^{1}_{\mathrm{shred}}(T) \leq \kappa < \kappa^{1}_{\mathrm{shred}}(T)$. 

Suppose it has been proven for $m$ and suppose $\langle A_{i} : i \leq \kappa \rangle$ is an increasing continuous sequence of models with $|A_{i}| = \lambda_{i}$ and $p(x_{0},\ldots, x_{m}) \in S^{m+1}(A_{\kappa})$ is a type such that $p \upharpoonright A_{i+1}$ shreds over $A_{i}$ with a built-in witness $\overline{b}_{i}$, witnessed by the formula $\varphi_{i}(x_{0},\ldots, x_{m};a_{i}) \in p\upharpoonright A_{i+1}$.  Then because $\overline{b}_{i}$ is $A_{i}$-indiscernible, we have $\langle A_{i} : i \leq \kappa \rangle \in \mathbb{K}$ in the notation of Subsection \ref{respect subsection} with the $\overline{b}_{i}$ playing the role of $\overline{a}_{i}$.  

Let $p'(x_{0},\ldots,x_{m}) = \{\varphi(x_{0},\ldots, x_{m};a_{i}) : i < \kappa\}$ and let $p''(x_{m})$ be defined by 
\begin{eqnarray*}
p''(x_{m}) &=& (\exists x_{0},\ldots, x_{m-1})\bigwedge p'(x_{0},\ldots, x_{m}) \\
&=& \{ (\exists x_{0},\ldots, x_{m-1})\bigwedge_{\varphi \in w} \varphi(x_{0},\ldots, x_{m-1}) : w \subseteq p' \text{ finite}\}.     
\end{eqnarray*}
Note that $|p''| = \kappa < \mu$.  By Lemma \ref{the main lemma really}, there is $\overline{B} = \langle B_{i} : i \leq \kappa \rangle \in \mathbb{K}$ such $\overline{A} \leq_{\mathbb{K}} \overline{B}$ and such that $p''$ is realized by some $c \in B_{\kappa}$.  By the definition of $\mathbb{K}$, for each $i < \kappa$, there is some $\alpha_{i} < \lambda_{i+1}$ such that $\overline{b}_{i, \geq \alpha_{i}}$ is $B_{i}$-indiscernible.  Let $i_{*}$ be minimal such that $c \in B_{i_{*}}$ and let $q(x_{0},\ldots, x_{m-1}) = p'(x_{0},\ldots, x_{m-1},c)$.  Let $q' \in S(B_{\kappa})$ be any completion of $q$. Then for all $i \geq i_{*}$, we have that $q' \upharpoonright B_{i+1}$ shreds over $B_{i}$ with the built-in witness $\overline{b}_{i, \geq \alpha_{i}}$.  Reindexing by setting $B'_{i} = B_{i_{*}+i}$ and $a_{i,j} = b_{i,\alpha_{i} + j}$ for all $i < \kappa$ and $j < \lambda_{i+1}$, we may apply the induction hypothesis to complete the proof.  
\end{proof}

\subsection{Exact saturation}

As in Subsection \ref{respect subsection}, we fix a singular cardinal $\mu$.  Writing $\text{cf}(\mu) = \kappa$, we will assume there is an increasing and continuous sequence of cardinals $\overline{\lambda} = \langle \lambda_{i} : i \leq \kappa \rangle$ such that $\lambda_{0} > \kappa$, $\lambda_{i+1}$ is regular for all $i < \kappa$, and $\lambda_{\kappa} = \mu$.  

We write $I$ to denote $\{(i,\alpha) : i < \kappa, \alpha < \lambda_{i+1}\}$ ordered lexicographically.  We write $I_{i, \geq\beta} = \{(j,\alpha) : j = i \text{ and }\alpha \geq \beta\}$ and we write $I_{i}$ for $I_{i,\geq 0}$.  We also fix an indiscernible sequence $\overline{a} = \langle a_{t} : t \in I\rangle$.  We similarly write $\overline{a}_{i,\geq \beta}$ for $\langle a_{t} : t \in I_{i, \geq\beta} \rangle$ and $\overline{a}_{i}$ for $\langle a_{t} : t \in I_{i} \rangle$.  If $i < \kappa$, and $\alpha < \beta < \lambda_{i+1}$, we write $\overline{a}_{i,\alpha,\beta}$ for the sequence $\langle a_{j,\gamma} : j = i, \gamma \in [\alpha, \beta)\rangle$.  Note that, in particular, we have $\overline{a}_{i}$ is $\overline{a}_{<i}$-indiscernible.  In this subsection, we will write $\mathbb{K}$ to refer to the class of $\overline{A}$ as in Definition \ref{K def} with respect to the sequences $\overline{a}_{i}$ described above.  

Additionally, we will assume that $T$ is a theory with $\kappa^{1}_{\mathrm{shred}}(T) \leq \kappa = \text{cf}(\mu)$ and with the independence property witnessed by the formula $\varphi(x;y)$ along the sequence $\langle a_{i} : i \in I \rangle$\textemdash that is, for all $X \subseteq I$, we have that $\{\varphi(x;a_{i})^{(\mathrm{if }i \in X)} : i \in I\}$ is consistent. 

We will construct a model containing $\langle a_{i} : i \in I \rangle$ that is $\mu$-saturated but every finite tuple from this model has the property that there are intervals from our fixed indiscernible sequence $\langle a_{i} : i \in I \rangle$ which are indiscernible over it.  Because we assume $T$ has the independence property, witnessed along this indiscernible sequence, it will follow that $\{\varphi(x;a_{i}) : i \text{ even}\} \cup \{\neg \varphi(x;a_{i}) : i \text{ odd}\}$ is an omitted type, which means that the model produced by our construction is not $\mu^{+}$-saturated.  Our proof pursues the same strategy as the construction of an exactly satured model of a simple theory from \cite[Theorem 3.3]{20-Kaplan2015}, but with $\kappa_{\mathrm{shred}}(T) < \infty$ replacing the assumption of simplicity.  

In order to organize the construction, we will use the following combinatorial principle:  

\begin{defn}
Suppose $\kappa$ is an uncountable cardinal.  For a club $C$, we write $\mathrm{Lim}(C)$ for the set $\{\alpha  \in C : \sup(C \cap \alpha) = \alpha\}$.  We write $\square_{\kappa}$ for the following assertion:  there is a sequence $\langle C_{\alpha} : \alpha \in \text{Lim}(\kappa^{+})\rangle$ such that
\begin{enumerate}
\item $C_{\alpha} \subseteq \alpha$ is club.
\item If $\beta \in \text{Lim}(C_{\alpha})$ then $C_{\beta} = C_{\alpha} \cap \beta$.  
\item If $\text{cf}(\alpha) < \kappa$, then $|C_{\alpha}| < \kappa$.  
\end{enumerate}
We call such a sequence a $\emph{square sequence}$ (for $\kappa$).  
\end{defn}

The following remark was noted in \cite[Remark 3.2]{20-Kaplan2015} \textemdash it will play a similar role in our deduction of the main theorem.

\begin{rem}\label{bettersquare}
Suppose $\langle C_{\alpha} : \alpha \in \text{Lim}(\kappa^{+}) \rangle$ is a square sequence and $C'_{\alpha} = \text{Lim}(C_{\alpha})$.  Then we have the following:
\begin{enumerate}
\item If $C'_{\alpha} \neq \emptyset$ and $\sup(C'_{\alpha}) \neq \alpha$ then $C'_{\alpha}$ has a last element and $\text{cf}(\alpha) = \omega$.  If $C'_{\alpha} = \emptyset$ then $\text{cf}(\alpha) = \omega$.
\item  For all $\beta \in C'_{\alpha}$, $C'_{\beta} = C_{\alpha}' \cap \beta$.  
\item If $\text{cf}(\alpha) < \kappa$, then $|C'_{\alpha}| < \kappa$.  
\end{enumerate}
\end{rem}

The following is the main theorem of the section.  The proof follows \cite[Theorem 3.3]{20-Kaplan2015}. 

\begin{thm}
If $T$ has the independence property and $\kappa_{\mathrm{shred}}(T) < \infty$, then $T$ has an exactly $\mu$-saturated model for any singular $\mu > |T|$ of cofinality $\kappa \geq \kappa_{\mathrm{shred}}(T)$ such that $\square_{\mu}$ and $2^{\mu} = \mu^{+}$.  
\end{thm}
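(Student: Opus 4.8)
The plan is to build the exactly $\mu$-saturated model by a transfinite construction of length $\mu^{+}$, organized along a square sequence, producing an increasing continuous chain of members of the class $\mathbb{K}$ from Subsection~\ref{respect subsection}. Fix a square sequence $\langle C_{\alpha} : \alpha \in \mathrm{Lim}(\mu^{+}) \rangle$ and pass to $C'_{\alpha} = \mathrm{Lim}(C_{\alpha})$ as in Remark~\ref{bettersquare}. Using $2^{\mu} = \mu^{+}$, fix a bookkeeping function that, along the way, enumerates all $1$-types over the sets $A^{\alpha}_{\kappa}$ with domain of size $<\mu$ that will appear in the construction, so that each such type is eventually handed to us at some stage to be realized. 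We will construct $\langle \overline{A}^{\alpha} : \alpha < \mu^{+} \rangle$ with each $\overline{A}^{\alpha} = \langle A^{\alpha}_{i} : i \leq \kappa \rangle \in \mathbb{K}$, increasing in $\alpha$ with respect to $\leq_{\mathbb{K}}$, and continuous at limits, with $\overline{A}^{0}$ chosen so that $\langle a_{t} : t \in I \rangle \subseteq A^{0}_{\kappa}$ and $A^{0}_{i}$ respects $\overline{a}_{i}$ (possible by extracting an indiscernible subsequence as needed); the final model will be $M = \bigcup_{\alpha < \mu^{+}} A^{\alpha}_{\kappa}$, which has size $\mu^{+}$.

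The successor step is where Lemma~\ref{the main lemma really} does the work: given $\overline{A}^{\alpha} \in \mathbb{K}$ and the $1$-type $p$ over $A^{\alpha}_{\kappa}$ with $|\mathrm{dom}(p)| < \mu$ handed to us by the bookkeeping, Lemma~\ref{the main lemma really} produces $\overline{A}^{\alpha+1} \in \mathbb{K}$ with $\overline{A}^{\alpha} \leq_{\mathbb{K}} \overline{A}^{\alpha+1}$ and some $c \in A^{\alpha+1}_{\kappa}$ realizing $p$. Iterating $\mu^{+}$ times and using that $\mu^{+}$ is regular while every relevant type has domain of size $\leq \mu < \mu^{+}$, a standard reflection argument shows $M$ is $\mu$-saturated: any type over a subset of $M$ of size $<\mu$ already lives over some $A^{\alpha}_{\kappa}$ and is realized by stage $\alpha+1$ (or shortly after, once the bookkeeping reaches it).

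The real issue, and the reason the square sequence is needed, is the limit stages: I must maintain membership in $\mathbb{K}$ through limits $\alpha$, i.e. I must ensure $A^{\alpha}_{i} = \bigcup_{\beta<\alpha} A^{\beta}_{i}$ still \emph{respects} $\overline{a}_{i}$ for every $i<\kappa$, even though $|A^{\alpha}_{i}|$ has now reached $\lambda_{i}$ and respecting is not automatically preserved under unions of length up to $\lambda_{i}$ (or $\mu$). This is exactly the obstruction handled in \cite[Theorem 3.3]{20-Kaplan2015}: one uses the coherence property (2) of the square sequence, $C'_{\beta} = C'_{\alpha} \cap \beta$ for $\beta \in C'_{\alpha}$, together with the smallness clause (3), $|C'_{\alpha}| < \kappa$ whenever $\mathrm{cf}(\alpha) < \kappa$, to thin the construction so that at a limit $\alpha$ the chunk $A^{\alpha}_{i}$ is built as a union indexed essentially by $C'_{\alpha}$ in a way that a single $\alpha_{i} < \lambda_{i+1}$ can be chosen making $\overline{a}_{i,\geq \alpha_{i}}$ indiscernible over all of $A^{\alpha}_{i}$ at once. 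Concretely: for $\mathrm{cf}(\alpha) < \kappa$ the union is over fewer than $\kappa$ earlier stages, and since $\lambda_{i+1}$ is regular and $>\kappa$ one can take the sup of the witnessing ordinals; for $\mathrm{cf}(\alpha) \geq \kappa$ one arranges (via the square) that respecting was already stabilized cofinally below $\alpha$. I expect this limit-stage bookkeeping — interleaving the square sequence with the realization demands so that both $\leq_{\mathbb{K}}$-continuity and the respect clause survive — to be the main technical obstacle; everything else is either Lemma~\ref{the main lemma really} or routine.

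Finally, the model omits a type. Since $\langle a_{t} : t \in I \rangle \subseteq A^{0}_{\kappa} \subseteq M$ and every member of $\mathbb{K}$ respects each $\overline{a}_{i}$, for any finite tuple $c$ from $M$ we have $c \in A^{\alpha}_{\kappa}$ for some $\alpha$, hence for each $i<\kappa$ there is $\alpha_{i} < \lambda_{i+1}$ with $\overline{a}_{i,\geq\alpha_{i}}$ indiscernible over $c$; in particular a cofinal sub-interval of the whole indiscernible sequence $\langle a_{i} : i \in I\rangle$ is $c$-indiscernible, so $c$ cannot realize $\{\varphi(x;a_{i}) : i \text{ even}\} \cup \{\neg\varphi(x;a_{i}) : i \text{ odd}\}$, a type which is consistent because $\varphi$ witnesses the independence property along $\langle a_{i} : i \in I\rangle$. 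As this type has domain of size $\mu$, $M$ is not $\mu^{+}$-saturated, so $M$ is exactly $\mu$-saturated, completing the proof.
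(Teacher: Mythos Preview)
Your overall strategy---build a $\mu^{+}$-long chain in $\mathbb{K}$, use Lemma~\ref{the main lemma really} at successors, the square at limits, and argue the union is $\mu$-saturated but omits the alternating $\varphi$-type---is exactly the paper's. But there is a genuine gap at the point you yourself flag as the main obstacle. You assert the chain can be taken $\leq_{\mathbb{K}}$-increasing and \emph{continuous} at limits. This cannot work: membership in $\mathbb{K}$ forces $|A^{\alpha}_{i}| = \lambda_{i}$, and a continuous $\leq_{\mathbb{K}}$-increasing chain of length $\mu^{+}$ in which new realizations are added cofinally cannot keep each coordinate at size $\lambda_{i}$. The paper's key maneuver, which your sketch misses, is to weaken the order to $\leq_{*}$ (containment only from some index onward) and, at a limit $\alpha$, to \emph{reset} the low-index sets: one picks $i_{0}$ least with $|C_{\alpha}| < \lambda_{i_{0}}$, sets $A_{\alpha,i} = \overline{a}_{<i}$ for $i < i_{0}$, and only for $i \geq i_{0}$ defines $A_{\alpha,i} = \bigcup_{\beta \in C_{\alpha}} A_{\beta,i}$---a union indexed by $C_{\alpha}$, not by all $\beta < \alpha$. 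Verifying that this union stays in $\mathbb{K}$ (right size, still respects $\overline{a}_{i}$) is exactly where the coherence clause $C_{\beta} = C_{\alpha} \cap \beta$ is used, together with an inductive invariant (the paper's condition (5)) that records $\overline{A}_{\beta} \leq_{i} \overline{A}_{\alpha}$ whenever $\beta \in C_{\alpha}$ and $|C_{\alpha}| < \lambda_{i}$.

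Relatedly, your case split at limits ($\mathrm{cf}(\alpha) < \kappa$ versus $\geq \kappa$) and the reason you give (``$\lambda_{i+1}$ regular $>\kappa$, take the sup of witnessing ordinals'') do not match what actually makes the argument go through. The relevant dichotomy is whether $\sup C_{\alpha} = \alpha$ or not; since $\mu$ is singular, every $\alpha < \mu^{+}$ has $\mathrm{cf}(\alpha) < \mu$, so $|C_{\alpha}| < \mu$ always, which is what lets one choose $i_{0}$ as above. The case $\sup C_{\alpha} < \alpha$ (forcing $\mathrm{cf}(\alpha) = \omega$) needs a separate ad hoc treatment via a cofinal $\omega$-sequence above $\max C_{\alpha}$. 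Finally, a small point on the omission argument: once you only have $\leq_{*}$, a given $c \in M$ need not lie in $A_{\alpha,i}$ for \emph{all} $i$, but it does for all $i$ above some threshold, and that suffices to block the alternating type, since the type uses all $i < \kappa$.
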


\begin{proof}
Let $\langle C_{\alpha} : \alpha \in \text{Lim}(\mu^{+}) \rangle$ be a sequence as in Remark \ref{bettersquare}.  Note that, for all $\alpha \in \mathrm{Lim}(\mu^{+})$, we have that $|C_{\alpha}| < \mu$ by condition (3) of Remark \ref{bettersquare}, as $\alpha < \mu^{+}$ and hence $\mathrm{cf}(\alpha) < \mu$, as $\mu$ is singular.  Partition $\mu^{+}$ into $\{S_{\alpha} : \alpha < \mu^{+}\}$ so that each $S_{\alpha}$ has size $\mu^{+}$.  By induction, we will construct a sequence of pairs $\langle (\overline{A}_{\alpha}, p_{\alpha}) : \alpha < \mu^{+} \rangle$ such that 
\begin{enumerate}
\item $\overline{A}_{\alpha} = \langle A_{\alpha,i} : i < \kappa \rangle \in \mathbb{K}$.
\item $\overline{p}_{\alpha} = \langle p_{\alpha,\beta} : \beta \in S_{\alpha} \setminus \alpha \rangle$ is an enumeration of all complete 1-types over subsets of $\bigcup_{i} A_{\alpha,i}$ of size $< \mu$ (using $|T| < \mu$ and $2^{\mu} = \mu^{+})$.  
\item If $\beta < \alpha$, then $\overline{A}_{\beta} \leq_{*} \overline{A}_{\alpha}$.  
\item If $\alpha \in S_{\gamma}$ and $\gamma < \alpha$, then $\overline{A}_{\alpha+1}$ contains a realization of $p_{\gamma,\alpha}$.  
\item If $\alpha$ is a limit, then for any $i < \kappa$ such that $|C_{\alpha}| < \lambda_{i}$ and $\beta \in C_{\alpha}$, then we have that $\overline{A}_{\beta} \leq_{i} \overline{A}_{\alpha}$.  
\end{enumerate}
At stage $0$, we define $\overline{A}_{0}$ to be the minimal sequence in $\mathbb{K}$\textemdash that is, $A_{0,i} = \bigcup \overline{a}_{<i}$ for all $i < \kappa$.  For the successor case, use Lemma \ref{the main lemma really}.  

Now we handle the limit cases.  

\textbf{Case 1}:  $\sup(C_{\alpha}) = \alpha$.  Let $i_{0} = \text{min}\{i < \kappa : |C_{\alpha}| < \lambda_{i}\}$ which is necessarily a successor ordinal.  For $i < i_{0}$, we define $A_{\alpha,i} = \overline{a}_{<i}$ and for $i \geq i_{0}$ successor, we let $A_{\alpha,i} = \bigcup_{\beta \in C_{\alpha}} A_{\beta,i}$.  Note that $|A_{\beta,i}| \leq \lambda_{i}$ for all $i < \kappa$, and for $i$ limit we define $A_{\alpha,i}$ by continuity, setting
$$
A_{\alpha,i} = \bigcup_{\substack{j < i \\ j \text{ successor}}} A_{\alpha,j}.
$$
Note that it follows, then, that for $i$ limit, we also have $A_{\alpha,i} = \bigcup_{\beta \in C_{\alpha}} A_{\beta,i}$.  

We have to check (1),(3), and (5).  First we show that $\overline{A}_{\alpha} \in \mathbb{K}$.  The only thing to check is that $i \geq i_{0}$ implies $A_{\alpha,i}$ respects $\overline{a}_{i}$.  Now if $w \subseteq A_{\alpha,i}$ is a finite set, for each $e \in w$, there is some $\beta_{e} \in C_{\alpha}$ so that $e \in A_{\beta_{e},i}$.  Let $\beta = \max\{\beta_{e} : e \in w\}$.  Then $C_{\alpha} \cap \beta = C_{\beta}$.  By (5), the fact that $|C_{\beta}| < \lambda_{i_{0}}$, and induction, we have $\beta_{e} < \beta$ implies $\beta_{e} \in C_{\beta}$ and $\overline{A}_{\beta_{e}} \leq_{i_{0}} \overline{A}_{\beta}$ so $A_{\beta_{e},i} \subseteq A_{\beta,i}$.  It follows that $w \subseteq A_{\beta,i}$.  As $\overline{A}_{\beta} \in \mathbb{K}$, we know $A_{\beta,i}$ respects $\overline{a}_{i}$, so there is some $\delta < \lambda_{i+1}$ such that $\overline{a}_{i,\geq \delta}$ is $w$-indiscernible.  As $w \subseteq A_{\alpha,i}$ is arbitrary, this shows $A_{\alpha,i}$ respects $\overline{a}_{i}$ and, therefore, $\overline{A}_{\alpha} \in \mathbb{K}$.  Next, if $\beta < \alpha$, then, because $\text{sup}(C_{\alpha}) = \alpha$, there is $\beta' \in C_{\alpha}$ such that $\beta < \beta'$.  By induction, $\overline{A}_{\beta} \leq_{*} \overline{A}_{\beta'}$ and, by construction, $\overline{A}_{\beta'} \leq_{i_{0}} \overline{A}_{\alpha}$, from which it follows that $\overline{A}_{\beta} \leq_{*} \overline{A}_{\alpha}$, which shows (3).  Finally (5) is by construction.

\textbf{Case 2}:   $\text{sup}(C_{\alpha}) < \alpha$.  We know in this case $C_{\alpha}$ has a maximum element $\gamma$ and $\text{cf}(\alpha) = \omega$.  Choose an increasing cofinal sequence $\langle \beta_{n} : n < \omega \rangle$ in $\alpha$ with $\beta_{0} = \gamma$.  Then, by induction, we may choose an increasing sequence of successor ordinals $\langle i_{n} : n < \omega \rangle$ so that $\overline{A}_{\beta_{n}} \leq_{i_{n}} \overline{A}_{\beta_{n+1}}$.  Setting $i_{-1} = 0$ and $i = \text{sup}\{i_{n}: n < \omega\}$, we define $\overline{A}_{\alpha}$ as follows:  for successor $j \in [i_{n-1},i_{n})$, we put $A_{\alpha,j} = A_{\beta_{n},j}$ and for successor $j \geq i$, we put $A_{\alpha,j} = \bigcup_{n < \omega} A_{\beta_{n},j}$.  For limit ordinals $j$, $A_{\alpha,j}$ is defined by continuity.  It is easy to see that this satisfies (1) and (3), so we check (5).

First, observe that $\overline{A}_{\gamma} \leq \overline{A}_{\alpha}$.  To see this, it suffices to show by induction on $n$, that if $j \geq i_{n-1}$, then $A_{\gamma,j} \subseteq A_{\beta_{n},j}$.  For $n=0$ this is by definition.  Assuming it is true for $n$, we can consider an arbitrary $j > i_{n}$.  Then by choice of $i_{n}$, $\overline{A}_{\beta_{n}} \leq_{i_{n}} \overline{A}_{\beta_{n+1}}$ so $A_{\beta_{n},j} \subseteq A_{\beta_{n+1},j}$.  As the sequence $\langle i_{n} : n < \omega \rangle$ is increasing, we have also $j > i_{n-1}$ so, by the inductive hypothesis, $A_{\gamma,j} \subseteq A_{\beta_{n},j}$ so, by transitivity, $A_{\gamma,j} \subseteq A_{\beta_{n+1},j}$ as desired.

Now suppose $i < \kappa$, $|C_{\alpha} | < \lambda_{i}$, and $\beta \in C_{\alpha}$.  Then $\beta \leq \gamma$ and as $\overline{A}_{\gamma} \leq \overline{A}_{\alpha}$ we have in particular that $\overline{A}_{\gamma} \leq_{i} \overline{A}_{\alpha}$, so we may assume $\beta < \gamma$.  Then $\beta \in C_{\alpha} \cap \gamma = C_{\gamma}$ and $|C_{\gamma}| = |C_{\alpha} \cap \gamma| < \lambda_{i}$ so it follows by induction that $\overline{A}_{\beta} \leq_{i} \overline{A}_{\gamma} \leq \overline{A}_{\alpha}$ so $\overline{A}_{\beta} \leq_{i} \overline{A}_{\alpha}$.  

To conclude, we define a model $M$ by 
$$
M = \bigcup_{\substack{\alpha < \mu^{+} \\ i < \kappa}} A_{\alpha,i}.
$$
By (4), the model $M$ is $\mu$-saturated.  Moreover $M$ is not $\mu^{+}$-saturated, as the partial type 
$$
\{\varphi(x;a_{i,\alpha}) : i < \kappa, \alpha \text{ even}\} \cup \{\neg \varphi(x;a_{i,\alpha}) : i < \kappa, \alpha \text{ odd}\}
$$
is omitted by (1). 
\end{proof}

\begin{quest}
Suppose $T$ is NTP$_{2}$ and has the independence property, and assume $\mu$ is a singular cardinal such that $\mathrm{cf}(\mu) > |T|$, $2^{\mu} = \mu^{+}$, and $\square_{\mu}$.  Does $T$ have an exactly $\mu$-saturated model?
\end{quest}

\section{Examples}

\subsection{Standard examples for the SOP$_{n}$ hierarchy}

Recall the definition of the SOP$_{n}$ heirarchy:

\begin{defn}
Suppose $n \geq 3$.  The theory $T$ has the $n$\emph{th strong order property} (SOP$_{n})$ if there is a formula $\varphi(x;y)$ and a sequence of tuples $\langle a_{i} : i < \omega \rangle$ so that 
\begin{enumerate}
\item $\models \varphi(a_{i};a_{j})$ if and only if $i < j$.
\item $\{\varphi(x_{i},x_{i+1}) : i < n-1\} \cup \{\varphi(x_{n-1},x_{0})\}$ is inconsistent.  
\end{enumerate}  
If $T$ does not have SOP$_{n}$, we say $T$ is NSOP$_{n}$.  
\end{defn}

Note that $\mathrm{SOP}_{n+1} \implies \mathrm{SOP}_{n}$ for all $n \geq 3$ \cite[Claim 2.6]{shelah1996toward}.

By a directed graph we mean a set with a binary relation that is assymetric and irreflexive.  Given a natural number $n \geq 3$, we let $L_{n} = \{R_{1}(x,y)\} \cup \{S_{l}(x,y) : 1 \leq l < n\}$ be a language with $n$ binary relations.  The theory $T^{0}_{n}$ is the $L_{n}$-theory of directed graphs with no cycle of length $\leq n$, where $R_{1}(x,y)$ is the (assymetric) edge relation and $S_{l}(x,y)$ means that there is no directed path in the graph $R_{1}$ of length $\leq l$ from $x$ to $y$.  More precisely, $T^{0}_{n}$ consists of the following axioms:
\begin{itemize}
\item $R_{1}(x,y)$ is an irreflexive assymetric relation:
$$
(\forall x,y)[R_{1}(x,y) \to \neg R_{1}(y,x)].
$$ 
\item There are no directed loops of length $\leq n$.  That is, for all $k$ with $1 \leq k \leq n$, we have
$$
\neg (\exists z_{0},\ldots, z_{k-1}) \left[\bigwedge_{i < k-1} R_{1}(z_{i},z_{i+1}) \wedge R_{1}(z_{k-1},z_{0}) \right].
$$
\item The relation $S_{l}(x,y)$ implies that there is no directed path of positive length $\leq l$ from $x$ to $y$:
$$
(\forall x,y) \left[ S_{l}(x,y) \to \neg (\exists z_{0},\ldots, z_{l}) \left[ z_{0} = x \wedge z_{l} = y \wedge \bigwedge_{i < l} R_{1}(z_{i},z_{i+1}) \vee z_{i} = z_{i+1} \right] \right].
$$
\item Paths satisfy the triangle inequality:  if $l+l' < n$, then 
$$
(\forall x,y,z)\left[\neg S_{l}(x,y) \wedge \neg S_{l'}(y,z) \to \neg S_{l+l'}(x,z) \right],
$$
and, because there are no loops of size $\leq n$, for all $1 \leq l < n' \leq n$
$$
(\forall x,y,z)\left[\neg S_{l}(x,y) \to S_{n'-l}(y,x)\right].
$$
\end{itemize}  
This is a universal theory and the model completion of $T^{0}_{n}$ is denoted $T_{n}$\textemdash it eliminates quantifiers.  Note that $R_{1}(x,y)$ is equivalent to $\neg S_{1}(x,y)$.  We will write $R_{l}(x,y)$ for $\neg S_{l}(x,y)$, which indicates there is a directed path of length $\leq l$ from $x$ to $y$.  We will write $\mathbb{M}_{n} \models T_{n}$ for the monster model of $T_{n}$.  The existence of the model completion is proved in \cite[Claim 2.8(3)]{shelah1996toward}, where it is also shown that $T_{n}$ is SOP$_{n}$ and NSOP$_{n+1}$.  

\begin{prop}
If $n \geq 4$, then $\kappa_{\mathrm{shred}}(T_{n}) = \infty$.
\end{prop}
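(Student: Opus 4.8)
The plan is to show directly that $\kappa^{1}_{\mathrm{shred}}(T_{n}) = \infty$; this suffices, since $\kappa_{\mathrm{shred}}(T_{n}) \geq \kappa^{1}_{\mathrm{shred}}(T_{n})$ (in fact equality holds by Theorem \ref{one var thm}). By Proposition \ref{the main equivalence prop} together with the definition of $\kappa^{1}_{\mathrm{shred}}$, it is enough to produce, for an \emph{arbitrary} regular cardinal $\kappa$, an increasing sequence $\langle A_{i} : i \leq \kappa\rangle$ with $A_{\kappa} = \bigcup_{i<\kappa}A_{i}$ and a type $p \in S^{1}(A_{\kappa})$ such that $p$ shreds over $A_{i}$ (equivalently, some formula in $p$ does) for every $i < \kappa$. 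The task is thus to exhibit, inside $\mathbb{M}_{n}$, a directed graph encoding a $\kappa$-long ``shredding chain''.

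I would build this configuration as a model of the universal theory $T^{0}_{n}$, reading $S_{l}(x,y)$ as ``no directed path of positive length $\leq l$ from $x$ to $y$'', and then embed it into $\mathbb{M}_{n}$ using that $T_{n}$ is the model completion of $T^{0}_{n}$ (so the embedding is an $L_{n}$-embedding, preserving each $S_{l}$ and its negation). The graph has $\kappa$ ``levels''; level $i$ carries an $L_{n}$-antichain $\overline{b}_{i} = \langle b_{i,j} : j < \lambda_{i}\rangle$ (the $\lambda_{i}$ large regular cardinals) plus a few auxiliary vertices, and there is one distinguished global vertex $c$. The auxiliary vertices at level $i$ form two short directed ``gadget'' paths, of lengths $d_{1}$ and $d_{2}$ with $d_{1}+d_{2}=n$ and $d_{1},d_{2}\geq 2$ (this is where $n \geq 4$ is used; for $n=4$ each gadget is a single vertex): the first runs from the even-indexed members of $\overline{b}_{i}$ to $c$, the second runs from $c$ to the odd-indexed members. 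One checks that the resulting graph is acyclic (once one leaves $c$ along the second gadget one reaches an odd-indexed $b_{i,j}$, which has no outgoing edge), so it satisfies the ``no cycle of length $\leq n$'' axiom of $T^{0}_{n}$, and—because of the shortest-path reading of the $S_{l}$ and this acyclicity—the triangle-inequality axioms and the axioms $\neg S_{l}(x,y)\to S_{n'-l}(y,x)$ all hold; hence it is a model of $T^{0}_{n}$. Set $A_{i}$ to be the union of levels $< i$, $A_{\kappa}=\bigcup_{i<\kappa}A_{i}$ (everything except $c$), and $p=\mathrm{tp}(c/A_{\kappa})$.

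For fixed $i$, let $\varphi_{i}(x;\bar a_{i})$, where $\bar a_{i}$ names the two gadget endpoints adjacent to $c$, assert that $x$ sits between those endpoints the way $c$ does; it lies in $p$ by construction. Using the triangle inequality and the axiom $\neg S_{2}(u,v)\to S_{n-2}(v,u)$ one shows that every realization $c'$ of $\varphi_{i}$ is \emph{forced} to reach each odd-indexed $b_{i,j}$ within distance $d_{2}$ and \emph{forbidden} from reaching any even-indexed $b_{i,j}$ within distance $d_{2}$ (since each even-indexed $b_{i,j}$ reaches $c'$ within distance $d_{1}=n-d_{2}$, so $S_{n-d_{1}}=S_{d_{2}}$ applies in the reverse direction). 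Thus $R_{d_{2}}(x;y)$, evaluated at $(c',b_{i,j})$, holds precisely for odd $j$, for every realization $c'$; as both parities are cofinal in $\lambda_{i}$, no end segment of $\overline{b}_{i}$ is indiscernible over $A_{i}c'$. Provided one also secures that $\overline{b}_{i}$ is $A_{i}$-indiscernible, this displays $\varphi_{i}(x;\bar a_{i})$ as explicitly $\lambda_{i}$-shredding over $A_{i}$ via the tuple $(\overline{b}_{i},\,1,\,\langle 0\mapsto 1\rangle,\,\langle R_{d_{2}}\rangle)$, hence shredding over $A_{i}$ by Lemma \ref{shredding equivalents}, and therefore $p$ shreds over $A_{i}$. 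Letting $\kappa$ range over all regular cardinals and invoking Proposition \ref{the main equivalence prop} then yields $\kappa^{1}_{\mathrm{shred}}(T_{n}) = \infty$, so $\kappa_{\mathrm{shred}}(T_{n}) = \infty$.

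The genuinely delicate point—and the step I expect to be the main obstacle to make fully rigorous—is the design of the global wiring so that each $\overline{b}_{i}$ really is $A_{i}$-indiscernible. The two gadgets must be long enough ($d_{1},d_{2}\geq 2$) that the single formula $\varphi_{i}$, with its finitely many parameters, ``plugs'' every realization into the \emph{whole} of $\overline{b}_{i}$; yet all parity-revealing directed paths—those from an even-indexed to an odd-indexed member of $\overline{b}_{i}$, and those running from an earlier level into $\overline{b}_{i}$ through $c$—must have length $\geq n$, so that the binary relations of $L_{n}$ cannot detect them and the antichain remains indiscernible over $A_{i}$. These two demands pin down $d_{1}+d_{2}=n$ with $d_{1},d_{2}\geq 2$, which is possible exactly when $n\geq 4$; for $n=3$ no such configuration exists, in agreement with $T_{3}$ being unshreddable. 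The remaining verifications—that the displayed graph is indeed a model of $T^{0}_{n}$, and the size/continuity bookkeeping for the chain $\langle A_{i}\rangle$—are routine.
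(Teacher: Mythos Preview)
Your proposal has a genuine gap in the indiscernibility step. By placing the global vertex $c$ \emph{inside} the constructed structure, you create inter-level directed paths of length strictly less than $n$ through $c$. Concretely, if $u_{i'}$ is the last vertex of the first gadget at level $i'<i$ (so $u_{i'}\to c$ is an edge), then
\[
u_{i'}\;\to\;c\;\to\;(\text{second gadget at level }i)\;\to\;b_{i,j}
\]
is a directed path of length $1+d_{2}$ to every odd-indexed $b_{i,j}$, while even-indexed $b_{i,j}$ have no incoming edges at all. Since $d_{1}\geq 2$ gives $1+d_{2}\leq n-1$, the relation $R_{1+d_{2}}$ lies in $L_{n}$ and separates odd from even members of $\overline{b}_{i}$ over the parameter $u_{i'}\in A_{i}$. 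Your inventory of ``parity-revealing paths'' accounts only for paths that \emph{begin at a member of $\overline{b}_{i'}$}; it misses those beginning at an earlier-level gadget vertex, and these are shorter by exactly the margin needed to fall within the language. (A symmetric problem occurs on the outgoing side: each even $b_{i,j}$ has a path of length $d_{1}+1\leq n-1$ to the first vertex of the second gadget at every earlier level.)

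The paper sidesteps this by \emph{not} including the realization in the constructed graph. It builds $G$ as $\kappa$ many pairwise disconnected components: level $i$ consists of the antichain $\overline{b}_{i}$ together with just two auxiliary vertices $a_{i,0},a_{i,1}$, with edges $a_{i,0}\to b_{i,\alpha}$ for even $\alpha$ and $b_{i,\alpha}\to a_{i,1}$ for odd $\alpha$. The type $p=\{R_{1}(x,a_{i,0})\wedge R_{1}(a_{i,1},x):i<\kappa\}$ is checked to be consistent directly (adding such a vertex creates no directed cycle), and any realization $c$ satisfies $R_{2}(c,b_{i,\alpha})$ for even $\alpha$ and $R_{2}(b_{i,\alpha},c)$ for odd $\alpha$; since $n\geq 4$ the pair $\{R_{2}(x,b),R_{2}(b,x)\}$ is inconsistent, so $R_{2}$ alone detects parity. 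Indiscernibility of $\overline{b}_{i+1}$ over $A_{i}$ is immediate from quantifier elimination because the levels of $G$ are totally disconnected. In particular the hypothesis $n\geq 4$ enters only via the inconsistency of $\{R_{2}(x,b),R_{2}(b,x)\}$, not through any $d_{1}+d_{2}=n$ calibration; the longer gadgets for $n\geq 5$ are unnecessary. Your construction becomes essentially the paper's once you delete $c$ from $G$ and move the two edges incident to $c$ into the type $p$.
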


\begin{proof}
Let $\kappa$ be an arbitrary infinite regular cardinal.  Define a directed graph $G$ with domain $\{b_{i,\alpha} : i< \kappa, \alpha < \omega\} \cup \{a_{i,j}: i < \kappa, j < 2\}$ and interpret the edge relation $R_{1}$ in $G$ by 
$$
R_{1}^{G} = \{(a_{i,0},b_{i,\alpha}) : i < \kappa, \alpha < \omega \text{ even}\} \cup \{(b_{i,\alpha},\alpha_{i,1}) : i < \kappa, \alpha < \omega \text{ odd}\},
$$
and then interpret $S^{G}_{l}$ and hence $R^{G}_{l}$ for $1 \leq l < n$ according to the axioms.  This clearly defines a model of $T^{0}_{n}$ so there is an $L_{n}$-embedding of $G$ into the monster model $\mathbb{M}_{n} \models T_{n}$.  Therefore, we may identify $G$ with an $L_{n}$-substructure of $\mathbb{M}$.   Define $A_{i} = \overline{a}_{\leq i} \overline{b}_{\leq i}$, for all $i < \kappa$.  

Let $\varphi(x;y,z) = R_{1}(x,y) \wedge R_{1}(z,x)$ and define a partial type $p$ by $p  = \{\varphi(x;a_{i,0},a_{i,1}) : i < \kappa\}$.  It is clear from the construction of $G$ that any vertex satisfying this collection of formulas would not create a cycle, hence in particular, it will not create a cycle of length $\leq n$ and, therefore, $p$ is a consistent set of formulas.  

%Define sets $A_{\beta}$ for $\beta < \kappa$ by $A_{\beta} = \{a_{i,j}: i < \beta, j < 2\}\overline{b}_{<\alpha}$ and define a partial type $p$ by $p  = \{\varphi(x;a_{i,0},a_{i,1}) : i < \kappa\}$.  CHECK CONSISTENT

Fix $i < \kappa$.  By quantifier-elimination, we have $\overline{b}_{i+1} = (b_{i+1,\alpha})_{\alpha < \omega}$ is $A_{i}$-indiscernible.  Let $c$ realize $\varphi(x;a_{i+1,0},a_{i+1, 1})$.  Then we have 
\begin{enumerate}
\item $R_{2}(c,b_{i+1,\alpha})$ for $\alpha < \omega$ even.
\item $R_{2}(b_{i+1,\alpha},c)$ for $\alpha < \omega$ odd.
\item $\{R_{2}(x,b_{i+1, \alpha}), R_{2}(b_{i+1,\alpha},x)\}$ is inconsistent for all $\alpha$, because $n \geq 4$.   
\end{enumerate}
It follows that no end-segment of $\overline{b}_{i+1}$ can be $c$-indiscernible, and therefore cannot be $A_{i}c$-indiscernible.  In fact, $\varphi(x;a_{i+1,0},a_{i+1,1}) \vdash R_{2}(x;b_{i+1,\alpha}) \leftrightarrow \neg R_{2}(x;b_{i+1,\alpha+1})$ for all even $\alpha < \omega$, which shows that $\varphi(x;a_{i+1,0},a_{i+1,1})$ explicitly shreds over $A_{i}$.  It follows that $\kappa_{\mathrm{shred}}(T) > \kappa$ and, as $\kappa$ was arbitrary, we have $\kappa_{\mathrm{shred}}(T) = \infty$.  
\end{proof}

Now we analyze $T_{3}$:  

\begin{lem} \label{T3 algebraic}
In $T_{3}$, if $\overline{b} = \langle b_{i} : i < \lambda \rangle$ is indiscernible over $A$, then for any tuple $a$, if $c$ is a tuple disjoint from $Aa$, then there is $c' \equiv_{Aa} c$ so that $\overline{b}$ is $Ac'$-indiscernible.
\end{lem}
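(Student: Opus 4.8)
The plan is to realize $\mathrm{tp}(c/Aa)$ by a tuple $c'$ that is attached to the whole indiscernible sequence $\overline{b}$ in one uniform way, and to produce such a $c'$ by explicitly building an $L_3$-structure and embedding it into $\mathbb{M}$, using that $T_3$ has quantifier elimination and is the model completion of the universal theory $T^0_3$ (so $\mathbb{M}$ embeds, over any small substructure, every small $L_3$-structure satisfying $T^0_3$). After a routine reduction I would assume the entries of $\overline{b}$ are pairwise distinct and disjoint from $Aac$; the degenerate cases (e.g.\ $\overline{b}$ constant) are trivial. Write $q=\mathrm{tp}(c/Aa)$. First I would construct an $L_3$-structure $G^\ast$ with universe $A\cup a\cup\overline{b}\cup c\cup V$, where $V=\{v^+_e,v^-_e:e\in c\}$ is a set of brand-new vertices. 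I declare the edge relation $R_1^{G^\ast}$ to agree with $\mathbb{M}$ on $Aa\overline{b}$ and on $Aac$; to contain \emph{no} edge between $c$ and $\overline{b}$; to satisfy $R_1(e,v^+_e)$, $R_1(v^+_e,b)$, $R_1(b,v^-_e)$ and $R_1(v^-_e,e)$ for every $e\in c$ and $b\in\overline{b}$; and to contain nothing further. I then interpret $R_2^{G^\ast}(x,y)$ as ``there is a directed $R_1^{G^\ast}$-path of length $\le 2$ from $x$ to $y$'' and $S_\ell=\neg R_\ell$. The effect of the helper vertices is that in $G^\ast$ every $e\in c$ satisfies $\neg R_1(e,b)$, $\neg R_1(b,e)$, $R_2(e,b)$ and $R_2(b,e)$ for \emph{every} $b\in\overline{b}$.

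Next I would check that $G^\ast\models T^0_3$. Once $R_\ell$ is read as path-existence of length $\le\ell$ and $S_\ell=\neg R_\ell$, the triangle and reverse-triangle inequalities and the asymmetry/irreflexivity of $R_1$ hold automatically, \emph{as long as} $G^\ast$ has no directed cycle of length $\le 3$; so that is the sole point to verify. A short cycle contained in $Aa\overline{b}$, or in $Aac$, contradicts $\mathbb{M}\models T_3$. No short cycle can visit a vertex of $V$: every in-edge of $v^+_e$ comes from $e$ and every out-edge goes into $\overline{b}$, and there is no edge from $\overline{b}$ to $c$, so closing a cycle through $v^+_e$ would require a directed path of length $\le 1$ from $\overline{b}$ to $c$, which does not exist; symmetrically for $v^-_e$. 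And a short cycle meeting both $\overline{b}$ and $c$ but no vertex of $V$ would have to pass from the $\overline{b}$-part to the $c$-part and back through $Aa$, using at least two vertices of $Aa$, hence have length $\ge 4$. The same kind of region-chasing shows that the restrictions of $G^\ast$ to $Aa\overline{b}$ and to $Aac$ coincide with $\mathbb{M}$ (any length-$\le 2$ path between two vertices of one of these sets that uses an outside vertex actually has both endpoints in $Aa$, where $\mathbb{M}$ already exhibits the path), so $Aa\overline{b}$ really is a substructure of $G^\ast$.

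Since $\mathbb{M}$ is a monster model of $T_3$, the model completion of $T^0_3$, there is an $L_3$-embedding $f\colon G^\ast\to\mathbb{M}$ fixing $Aa\overline{b}$ pointwise; put $c'=f(c)$. Because $G^\ast\!\upharpoonright\!Aac=\mathbb{M}\!\upharpoonright\!Aac$ and $f$ fixes $Aa$, we get $\mathrm{tp}(c'/Aa)=q$, hence $c'\equiv_{Aa}c$ by quantifier elimination. For the indiscernibility: $L_3$ is binary, so it is enough to see that every pair $(b,b')$ from $\overline{b}$ with $b$ before $b'$ has the same quantifier-free type over $Ac'$. Such a type is determined by the $R_\ell$-relations holding between $b$ and $b'$ (independent of the pair, as $\overline{b}$ is $A$-indiscernible, hence $\emptyset$-indiscernible), by those between $b$ or $b'$ and parameters of $A$ (independent, by $A$-indiscernibility), and by those between $b$ or $b'$ and each entry $f(e)$ of $c'$ (by construction of $G^\ast$, and since $f$ fixes $\overline{b}$, these are ``no $R_1$ in either direction, $R_2$ in both directions'', independent of the pair). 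Hence $\overline{b}$ is $Ac'$-indiscernible, as required.

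The step I expect to be the main obstacle is the design of $G^\ast$. In particular, the naive free amalgam of $Aa\overline{b}$ and $Aac$ over $Aa$ does \emph{not} work: in it, $R_2(e,b)$ holds precisely for those $b\in\overline{b}$ that receive an edge from some element of $Aa$ to which $e$ points, and $\overline{b}$ is only assumed $A$-indiscernible (not $Aa$-indiscernible), so this set need not be ``all or nothing''. The helper vertices $v^\pm_e$ are introduced exactly to make $R_2(e,\cdot)$ and $R_2(\cdot,e)$ uniformly true along $\overline{b}$, and the real work is the verification that inserting them produces no directed cycle of length $\le 3$ and does not alter the induced structures on $Aa\overline{b}$ and on $Aac$.
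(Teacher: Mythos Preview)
Your construction misses the key fact about $T_3$ that the paper exploits: in the model completion $T_3$ one has
\[
R_2(x,y)\ \longleftrightarrow\ \bigl(x\neq y\ \wedge\ \neg R_1(y,x)\bigr),
\]
so $T_3$ eliminates quantifiers already in the language $\{R_1\}$. This has two consequences for your argument.

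First, your stated worry that ``the naive free amalgam of $Aa\overline{b}$ and $Aac$ over $Aa$ does not work'' is unfounded: once $R_2$ is determined by $R_1$, putting no $R_1$-edges between $c$ and $\overline{b}$ forces $R_2(e,b)$ and $R_2(b,e)$ to hold for \emph{every} $e\in c$ and $b\in\overline{b}$ automatically, with no need for helper vertices. The paper's proof is exactly this free amalgam: set $R_1^{M}=R_1^{\mathbb M}\!\upharpoonright\! Aa\overline{b}\ \cup\ R_1^{\mathbb M}\!\upharpoonright\! Aac$, check there is no directed $3$-cycle (a two-line region-chase), and embed over $Aa\overline{b}$.

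Second, and more seriously, your explicit choice to interpret $R_2^{G^\ast}(x,y)$ as ``there is a directed $R_1^{G^\ast}$-path of length $\le 2$'' breaks the step where you embed $G^\ast$ into $\mathbb M$ over $Aa\overline{b}$. In $\mathbb M$, the relation $R_2$ is the much larger $x\neq y\wedge\neg R_1(y,x)$; so for any two points $x,y\in Aa\overline{b}$ with no $R_1$-edge either way and no length-$2$ path inside $G^\ast$, you have $\mathbb M\models R_2(x,y)$ but $G^\ast\models\neg R_2(x,y)$. Hence $G^\ast\!\upharpoonright\! Aa\overline{b}$ does \emph{not} coincide with $\mathbb M\!\upharpoonright\! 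Aa\overline{b}$ as $L_3$-structures, and your ``restrictions coincide'' verification is wrong (you only checked one inclusion). The fix is precisely to use the definability of $R_2$ from $R_1$; once you do, the helper vertices $v^\pm_e$ become superfluous and you recover the paper's short proof.
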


\begin{proof}
Note that $T_{3}$ eliminates quantifiers in the language containing only $R_{1}$, since $R_{2}(x,y)$ is definable by the formula $x \neq y \wedge \neg R_{1}(y,x)$.  For simplicity, we will write $R$ for $R_{1}$.  Because algebraic closure in $T_{3}$ is trivial, by replacing $c$ by something with the same type over $Aa$, we may assume $c$ is disjoint from $Aa\overline{b}$.  Define a model $M \models T^{0}_{3}$ as follows with underlying set $Aa\overline{b}c$ by defining
$$
R^{M} = R^{\mathbb{M}_{3}} \upharpoonright Aa\overline{b} \cup R^{\mathbb{M}_{3}} \upharpoonright Aac.  
$$	
We claim that $M \models T^{0}_{3}$.  To see this, suppose not and there are distinct $d_{0},d_{1},d_{2} \in M$ so that $R^{M}(d_{0},d_{1})$, $R^{M}(d_{1},d_{2})$, and $R^{M}(d_{2},d_{0})$.  Since $\mathbb{M}_{3}$ has no directed cycles of length 3, it is impossible for $d_{0},d_{1},d_{2}$ to be all contained in $Aa\overline{b}$ or all contained in $Aac$.  Therefore, without loss of generality, $d_{0} \in Aa\overline{b} \setminus Aac$.  But then since $R^{M}(d_{2},d_{0})$ and $R^{M}(d_{0},d_{1})$, we have $d_{1},d_{2} \in Aa\overline{b}$, by the definition of $R^{M}$, a contradiction.  This shows $M$ has no directed cycle of length 3 so $M \models T^{0}_{3}$.  

Embed $M$ into $\mathbb{M}_{3}$ over $Aa\overline{b}$ and let $c'$ be the image of $c$.  By quantifier elimination, we have $c' \equiv_{Aa} c$ and, because $c'$ is disjoint from $Aa\overline{b}$, we have $\overline{b}$ is $Ac'$-indiscernible.  
\end{proof}

\begin{prop}
$\kappa_{\mathrm{shred}}(T_{3}) =  \aleph_{0}$.	
\end{prop}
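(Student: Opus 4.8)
The plan is to invoke Theorem~\ref{one var thm} to reduce to showing $\kappa^{1}_{\mathrm{shred}}(T_{3}) = \aleph_{0}$; as $\aleph_{0}$ is the least regular cardinal, this amounts to ruling out an increasing continuous chain of models $\langle M_{i} : i \leq \omega \rangle$ with $M_{\omega} = \bigcup_{i < \omega} M_{i}$ together with a type $p \in S^{1}(M_{\omega})$ such that $p \upharpoonright M_{i+1}$ shreds over $M_{i}$ with a built-in witness for all $i < \omega$. I expect to need only the steps $i = 0$ and $i = 1$ to derive a contradiction.

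Assuming such a chain exists, I would first analyze the step $i = 0$: fix a formula $\varphi_{0}(x;a_{0}) \in p \upharpoonright M_{1}$ (with $a_{0}$ a tuple from $M_{1}$) shredding over $M_{0}$, witnessed by an $M_{0}$-indiscernible sequence $\overline{b}_{0} \subseteq M_{1}$. The crucial claim is that every element of $\varphi_{0}(\mathbb{M}_{3},a_{0})$ already lies in $M_{0} \cup a_{0}$, hence in $M_{1}$. To see this, suppose $c_{0} \models \varphi_{0}(x;a_{0})$ were disjoint from $M_{0}a_{0}$, that is, equal to no element of $M_{0}$ and to no coordinate of $a_{0}$. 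Then Lemma~\ref{T3 algebraic}, applied with $A = M_{0}$, the tuple $a_{0}$, and the $M_{0}$-indiscernible sequence $\overline{b}_{0}$, yields $c_{0}' \equiv_{M_{0}a_{0}} c_{0}$ over which $\overline{b}_{0}$ is indiscernible. But then $c_{0}' \models \varphi_{0}(x;a_{0})$ while $\overline{b}_{0}$ is indiscernible over $M_{0}c_{0}'$, contradicting the requirement in the definition of shredding that no end segment of $\overline{b}_{0}$ be indiscernible over $M_{0}c$ for a realization $c$ of $\varphi_{0}(x;a_{0})$. This is the only place where $T_{3}$ enters essentially: Lemma~\ref{T3 algebraic} fails for $T_{n}$ with $n \geq 4$, which is precisely what makes the preceding proposition true.

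Since $p \upharpoonright M_{1} \vdash \varphi_{0}(x;a_{0})$, the claim forces every realization of $p$ into $M_{0} \cup a_{0} \subseteq M_{1}$; picking one such realization $c^{*}$ and using that $p$ is a complete type over $M_{\omega} \supseteq M_{1}$, we get $p \vdash (x = c^{*})$ with $c^{*} \in M_{1}$. I would then turn to the step $i = 1$: fix $\varphi_{1}(x;a_{1}) \in p \upharpoonright M_{2}$ shredding over $M_{1}$, witnessed by an $M_{1}$-indiscernible sequence $\overline{b}_{1} \subseteq M_{2}$. As $(x = c^{*}) \in p \upharpoonright M_{2}$ and $c^{*} \in M_{1} \subseteq M_{2}$, we have $c^{*} \in \varphi_{1}(\mathbb{M}_{3},a_{1})$; but since $c^{*} \in M_{1}$ the sequence $\overline{b}_{1}$, being $M_{1}$-indiscernible, is indiscernible over $M_{1}c^{*}$ as well, again contradicting that $\overline{b}_{1}$ witnesses $\varphi_{1}(x;a_{1})$ shredding over $M_{1}$. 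Hence no such chain exists, so $\kappa^{1}_{\mathrm{shred}}(T_{3}) = \aleph_{0}$, and therefore $\kappa_{\mathrm{shred}}(T_{3}) = \sup_{m}\kappa^{m}_{\mathrm{shred}}(T_{3}) = \aleph_{0}$ by Theorem~\ref{one var thm}.

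The point I expect to be the real obstacle — or at least the thing one must be careful about — is that shredding genuinely occurs in $T_{3}$ (for instance $x = a$ shreds over $\emptyset$), so a naive ``no formula shreds'' argument is impossible. Instead one extracts from Lemma~\ref{T3 algebraic} that shredding in $T_{3}$ can only happen ``algebraically'', and then exploits the chain structure: the built-in witness at step $i = 1$ is indiscernible over all of $M_{1}$, whereas by the step $i = 0$ the realization of $p$ already lies inside $M_{1}$.
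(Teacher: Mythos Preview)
Your proposal is correct and follows essentially the same approach as the paper: reduce to one variable via Theorem~\ref{one var thm}, use Lemma~\ref{T3 algebraic} to force any realization of the first shredding formula into the base-plus-parameters, and then observe that the second shredding witness is already indiscernible over this set, contradicting shredding. The only cosmetic difference is that the paper works directly with a length-$2$ chain of sets $A \subseteq Aa_{0}$ (invoking the equivalence of Proposition~\ref{the main equivalence prop} implicitly), whereas you phrase it via the model-chain definition of $\kappa^{1}_{\mathrm{shred}}$; the content is identical.
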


\begin{proof}
%First, we will argue that $\kappa_{\mathrm{shred}}(T_{3}) \geq \aleph_{0}$.  For each $i < \omega$, find $a_{i}$ and $\overline{b}_{i} = \langle b_{i,j} : j < \omega \rangle$ such that $\overline{b}_{i}$ is $a_{<i}$-indiscernible and $\models R_{1}(a_{i},b_{i,j})$ if and only if $j$ is even (note that it follows that the $a_{i}$s are pairwise distinct).  Let $A_{n}$ be $a_{\leq n}$, viewed as a set.  Then for each $n$, let $p_{n}(x_{0},\ldots,x_{n}) = \{x_{i} = a_{i} : i \leq n\}$, which implies a complete type in $S_{n+1}(A_{n})$.  Moreover, $p_{n} \upharpoonright A_{i+1}$ shreds over $A_{i}$ for all $i < n$ since $p_{n} \vdash x = a_{i+1}$ and hence implies $R_{1}(x;b_{i,j})$ if and only if $j$ is even.  This shows $\kappa_{\mathrm{shred}}(T_{3}) \geq n$ for all $n < \omega$, hence $\kappa_{\mathrm{shred}}(T_{3}) \geq \aleph_{0}$.  
%
%Now we will show $\kappa_{\mathrm{shred}}(T_{3}) \leq \aleph_{0}$.  
By Theorem \ref{one var thm}, it suffices to show $\kappa^{1}_{\mathrm{shred}}(T_{3}) \leq \aleph_{0}$, and, in fact, we will show there is no shredding chain in a single free variable of length $2$.  Towards contradiction, suppose $A$ is a set of parameters, $\varphi_{0}(x;a_{0})$ shreds over $A$ witnessed by $\overline{b}_{0}$, $\varphi_{1}(x;a_{1})$ shreds over $Aa_{0}$ witnessed by $\overline{b}_{1}$, and $\{\varphi_{0}(x;a_{0}), \varphi_{1}(x;a_{1})\}$ is consistent, with $x$ a single free variable.  Because $\varphi_{0}(x;a_{0})$ has no realization $c$ such that $\overline{b}_{0}$ is indiscernible over $Ac$, it follows by Lemma \ref{T3 algebraic} that any realization of $\varphi_{0}(x;a_{0})$ is contained in $Aa_{0}$.  Then let $c \models \{\varphi_{0}(x;a_{0}),\varphi_{1}(x;a_{1})\}$.  Because $c$ is an element of $Aa_{0}$, it follows that $\overline{b}_{1}$ is $Aa_{0}c$-indiscernible, contradicting the fact that $\overline{b}_{1}$ witnesses that $\varphi_{1}(x;a_{1})$ shreds over $Aa_{0}$.  This completes the proof.    
\end{proof}

\subsection{NSOP$_{1}$ and unshreddability}

There is a theory of independence for NSOP$_{1}$ theories that indicates this class of theories may be considered quite close to the class of simple theories (see, e.g., \cite{kaplan2017kim}).  In the next two examples, however, we show that unshreddability is independent of NSOP$_{1}$ and, in particular, that within the class of NSOP$_{1}$ theories, it is still possible that $\kappa_{\mathrm{shred}}(T) = \infty$.  Recall the definition of SOP$_{1}$:
\begin{defn}
A formula $\varphi(x;y)$ is said to have SOP$_{1}$ if there is a tree of tuples $(a_{\eta})_{\eta \in 2^{<\omega}}$ satisfying the following:
\begin{enumerate}
\item For all $\eta \in 2^{\omega}$, $\{\varphi(x;a_{\eta | \alpha}) : \alpha < \omega\}$ is consistent.
\item For all $\eta \perp \nu$ in $2^{<\omega}$, if $(\eta \wedge \nu) \frown 0 \unlhd \eta$ and $(\eta \wedge \nu) \frown 1 = \nu$, then $\{\varphi(x;a_{\eta}), \varphi(x;a_{\nu})\}$ is inconsistent.  
\end{enumerate}
A theory $T$ is said to have SOP$_{1}$ if some $\varphi(x;y)$ has SOP$_{1}$ modulo $T$, otherwise $T$ is NSOP$_{1}$.  
\end{defn}

First, we describe an NSOP$_{1}$ example of a theory $T^{*}_{1}$ with $\kappa_{\mathrm{shred}}(T^{*}_{1}) = \aleph_{0}$.  This theory was studied in detail in \cite[Subsection 9.2]{kaplan2017kim}.  The language $L_{1}$ consists of unary predicates $F$ and $O$, a binary relation $E$, and a binary function $\text{eval}$.  The theory $T_{1}$ consists of the following axioms:
\begin{enumerate}
\item $F$ and $O$ partition the universe.
\item $E \subseteq O^{2}$ is an equivalence relation.
\item $\text{eval}: F \times O \to O$ is a selector function:
\begin{enumerate}
\item $(\forall x \in F)(\forall y \in O)\left[E(y,\text{eval}(x,y))\right]$.
\item $(\forall x \in F)(\forall y,z \in O)\left[E(y,z) \to \text{eval}(x,y) = \text{eval}(x,z)\right]$.
\end{enumerate}
\end{enumerate}

It was shown in \cite[Subsection 9.2]{kaplan2017kim} that $T_{1}$ has a model-completion $T^{*}_{1}$, which is the theory of the Fra\"iss\'e limit of finite models of $T_{1}$, which is $\aleph_{0}$-categorical with elimination of quantifiers.  It is additionally shown that algebraic closure and definable closure of a set coincide with the structure generated by the set, and that the theory is non-simple NSOP$_{1}$. 

\begin{exmp}
Suppose $c \in O$ and $\langle b_{i} : i < \lambda \rangle$ is an indiscernible sequence such that $b_{i} \in F$ for all $i < \lambda$ and $\mathrm{eval}(b_{i},c) = c$ if $i$ is even and the $\mathrm{eval}(b_{i},c)$ pairwise distinct and different from $c$ for $i$ odd.  Then the formula $E(x;c)$ implies $\mathrm{eval}(b_{i},x) = \mathrm{eval}(b_{i},c)$ for all $i$, thus $E(x,c)$ shreds over $\emptyset$, since for any even $\alpha < \lambda$ and $d$ with $\models E(d,c)$, $\mathrm{eval}(b_{\alpha},d) = \mathrm{eval}(b_{\alpha+2},d)$ and $\mathrm{eval}(b_{\alpha},d) \neq \mathrm{eval}(b_{\alpha+1},d)$.  
\end{exmp}

We show that, in a sense made precise by the following lemma, all instances of shredding in the theory $T^{*}_{1}$ resemble the previous example.  

\begin{lem} \label{shredding char}
Suppose $\varphi(x;a)$ is a non-algebraic formula with $l(x) = 1$ that shreds over $A$.  Then there is some $c \in Aa$, $E$-equivalent to no element of $A$, such that $\varphi(x;a) \vdash E(x,c)$.  In particular $\varphi(x;a)\vdash x \in O$.  
\end{lem}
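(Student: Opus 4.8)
The plan is to mimic the structure of the proof of Lemma \ref{T3 algebraic}: produce, for a suitable witness $\overline{b}$ and an arbitrary realization $c$ of $\varphi(x;a)$, a conjugate $c'$ of $c$ over $Aa$ over which a long end segment of $\overline{b}$ stays indiscernible, \emph{unless} $c$ is forced into a single $E$-class that is named by an element of $\mathrm{acl}(Aa)$ and is $E$-free over $A$; since $c'$ again realizes $\varphi(x;a)$, the latter must always hold, which is the conclusion. Concretely, I would first invoke Lemma \ref{no parameters}, applied with the empty subset of $A$ as the base, to fix an $A$-indiscernible sequence $\overline{b}=\langle b_{\alpha}:\alpha<\lambda\rangle$ with $\lambda=(|A|+|T|)^{+}$ such that no end segment $\overline{b}_{\geq\alpha}$ is indiscernible over $c$ for any $c\models\varphi(x;a)$. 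Using elimination of quantifiers and the fact (from \cite[Subsection 9.2]{kaplan2017kim}) that in $T^{*}_{1}$ algebraic closure coincides with the generated substructure, a realization $c$ interacts with $\overline{b}$ in only two ways: through the $E$-class of $c$ (if $c\in O$), and through the values $\mathrm{eval}(b_{\alpha},c)$ for the $F$-coordinates of $b_{\alpha}$ (which, when $c\in O$, depend only on the $E$-class of $c$).

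The main step is an amalgamation statement in the spirit of Lemma \ref{T3 algebraic}: if $c\models\varphi(x;a)$ is in $F$, or in $O$ with $E$-class meeting $A$, or in $O$ with $E$-class disjoint from $\mathrm{acl}(Aa)$, then there are $c'\equiv_{Aa}c$ and $\beta<\lambda$ with $\overline{b}_{\geq\beta}$ indiscernible over $Ac'$. I would prove it by: (i) a counting argument (using $\lambda>|\mathrm{acl}(Aa)|$, its regularity, and $A$-indiscernibility of $\overline{b}$) to pass to an end segment $\overline{b}_{\geq\beta}$ whose $O$-coordinates occur in a controlled family of $E$-classes — each either an $A$-class, or a class already containing one of those coordinates, or a class disjoint from $\mathrm{acl}(Aa)$ — and past any of the boundedly many positions ``touched'' by $c$ through $\mathrm{acl}(Aa)$; (ii) building a model $M$ of the universal theory $T_{1}$ on the domain $\mathrm{acl}(Aa)\,\overline{b}_{\geq\beta}\,c'$ (with $c'$ a new point) by keeping the structure $\mathbb{M}$ induces on $\mathrm{acl}(Aa)\,\overline{b}_{\geq\beta}$, giving $c'$ the sort, the $E$-relations, and the $\mathrm{eval}$-values to $\mathrm{acl}(Aa)$ prescribed by $\mathrm{tp}(c/\mathrm{acl}(Aa))$, and attaching $c'$ to the part of $\overline{b}_{\geq\beta}$ that is new over $\mathrm{acl}(Aa)$ freely and uniformly: if $c'\in F$, let $\mathrm{eval}(c',o)$, for $O$-elements $o$ of $\overline{b}_{\geq\beta}$ not $E$-related to $\mathrm{acl}(Aa)$, be the same fresh representative of each such class; if $c'\in O$, put $c'$ into a fresh class (disjoint-from-$\mathrm{acl}(Aa)$ case) or into the prescribed $A$-class but as a fresh, never-selected element, and in the fresh-class case let $\mathrm{eval}(b_{\alpha},c')$ be a uniform fresh representative for each $F$-coordinate. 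One checks $M\models T_{1}$: the partition axioms are untouched, $E$ remains an equivalence relation since only new classes and representatives are added, and $\mathrm{eval}$ remains a selector by construction. Embedding $M$ over $\mathrm{acl}(Aa)\,\overline{b}_{\geq\beta}$ into $\mathbb{M}$ and letting $c'$ be the image of the new point, quantifier elimination gives $c'\equiv_{Aa}c$, and the uniformity of the construction gives that $\overline{b}_{\geq\beta}$ is $Ac'$-indiscernible.

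Granting the amalgamation statement, since $\varphi(x;a)$ shreds over $A$ with witness $\overline{b}$, no such $c'$ exists, so every realization of $\varphi(x;a)$ lies in $O$ in an $E$-class that is $E$-free over $A$ and meets $\mathrm{acl}(Aa)$; a compactness argument then upgrades this to $\varphi(x;a)\vdash\bigvee_{j<k}E(x,c_{j})$ with each $c_{j}\in\mathrm{acl}(Aa)$ $E$-free over $A$, and in particular $\varphi(x;a)\vdash x\in O$. The delicate final point — and where I expect the real difficulty — is to push $k$ down to $1$: for each $j$ with $\varphi(x;a)\wedge E(x,c_{j})$ non-algebraic, that formula, being a consequence of $\varphi(x;a)$, also shreds over $A$ via $\overline{b}$, and re-running the disconnecting argument with a realization chosen generic in $c_{j}/E$ over $\overline{b}$ forces $\overline{b}$ to fail to be $Ac_{j}$-indiscernible on every end segment; one must then combine this information for the finitely many $c_{j}$, together with the $A$-indiscernibility of $\overline{b}$ and the control over which $E$-classes its coordinates occupy, to rule out $k\geq 2$. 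Everything else is quantifier elimination, the description of $\mathrm{acl}$ in $T^{*}_{1}$, and cardinal arithmetic against $\lambda$; the bookkeeping in the amalgamation (making the ``fresh and uniform'' choices while preserving $c'\equiv_{Aa}c$) and in this last reduction is where essentially all the work lies.
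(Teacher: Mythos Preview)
Your approach is essentially the paper's: both prove the contrapositive by an explicit free-amalgamation construction in the universal theory $T_{1}$, split into the same three cases ($f\in F$; $f\in O$ in an $E$-class meeting $A$; $f\in O$ in an $E$-class disjoint from $\langle Aa\rangle$), then embed the resulting structure over the $\overline{b}$-side and invoke quantifier elimination. The only cosmetic differences are that the paper first reduces to $A$ finite via Lemma \ref{no parameters} (so $\langle Aa\rangle$ is finite and no cardinal arithmetic is needed), and handles the $E$-class bookkeeping by passing to an end segment of $\overline{b}$ that is already $Aa$-indiscernible in the stable reduct $(F,O,E)$.

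You are right, however, to flag the last step, and in fact you should simply drop it: the conclusion with a single $c$ is stronger than what either argument proves, and is not true as stated. With $A=\emptyset$ and $a=(c_{1},c_{2})$ two $O$-elements in distinct $E$-classes, the formula $E(x,c_{1})\vee E(x,c_{2})$ shreds over $\emptyset$ (take an $\emptyset$-indiscernible sequence of distinct $F$-elements $b_{i}$ with $\mathrm{eval}(b_{i},c_{j})=c_{j}$ iff $i$ is even, exactly as in the paper's motivating example), yet it implies neither $E(x,c_{1})$ nor $E(x,c_{2})$. What both your construction and the paper's actually yield is the finite disjunction $\varphi(x;a)\vdash\bigvee_{j<k}E(x,c_{j})$ with each $c_{j}\in\langle Aa\rangle$ in an $E$-class disjoint from $A$, and this is all that the application (the computation of $\kappa_{\mathrm{shred}}(T^{*}_{1})$) needs: the classes forced by $\varphi_{0}(x;a_{0})$ are represented in $Aa_{0}$, while those forced by $\varphi_{1}(x;a_{1})$ are not, so the two formulas are already inconsistent. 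Your proposed mechanism for forcing $k=1$ is therefore both unavailable and unnecessary.
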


\begin{proof}
Suppose $\varphi(x;a)$ is a non-algebraic formula with $l(x) = 1$ and $\models \varphi(f;a)$ with $\neg E(f,c)$ for every $c \in Aa$ in an $E$-equivalence class disjoint from $A$.  We must show $\varphi(x;a)$ does not shred over $A$.  As a formula shreds over $A$ only if it shreds over some finite subset of $A$ (i.e. the parameters appearing in the formulas witnessing that it explicitly shreds over $A$), by Lemma \ref{no parameters}, we may assume $A$ is finite and that $a$ enumerates the structure generated by $A$ and $a$.  

Fix an $A$-indiscernible sequence $\overline{b} = (b_{i})_{i < \lambda}$ for $\lambda = (|T| + |A|)^{+}$ where $b_{i} = (b_{i,0},\ldots, b_{i,n-1})$ for all $i < \lambda$.  Additionally, as $a$ is a finite tuple, we can find some ordinal $\alpha$ such that, for each $j < n$, either there is an equivalence class represented by an element of $a$ such that $b_{i,j}$ is in this equivalence class for all $i \geq \alpha$, or $b_{i,j}$ is not equivalent to any element of $a$ for all $i \geq \alpha$; and additionally, either there is an element of $a$ such that $b_{i,j}$ is equal to this element of $a$ $i \geq \alpha$, or $b_{i,j}$ is not equivalent to any element of $a$ for all $j \geq \alpha$ (in other words, we may find some $\alpha$ such that $\overline{b}_{\geq \alpha}$ is $Aa$-indiscernible in the stable reduct $(F,O,E)$, where we forget the function $\mathrm{eval}$).  

Let $B = \langle a \overline{b}_{\geq \alpha} \rangle$ and $C = \langle a f \rangle$ be the structures generated by $a\overline{b}_{\geq \alpha}$ and $af$ in $\mathbb{M}$, respectively.  By the assumption that $\varphi(x;a)$ is not algebraic, we may assume $f \not\in B$. By our assumption that $\neg E(f,c)$ for every $c \in Aa$ in an $E$-equivalence class disjoint from $A$, we have the following three cases: 

\textbf{Case 1}:  $f \in O$ and $f$ is not $E$-equivalent in $\mathbb{M}$ to any element of $\langle a \rangle$.  

In this case, we define a structure $D$ whose underlying set is $B \cup C = B \cup [f]_{E}^{C}$, where $[f]^{C}_{E}$ denotes the $E$-class of $f$ in $C$.  We interpret $F^{D} = F^{B}$ and $O^{D} = O^{B} \cup [f]_{E}^{C}$, then we intepret $E^{D}$ to extend $E^{B}$ with $[f]^{C}_{E}$ forming a new equivalence class (thus also extending $E^{C}$).  Then we define $\mathrm{eval}^{D}$ to extend $\mathrm{eval}^{B}$ and $\mathrm{eval}^{C}$ (which agree on their common domain) and set $\mathrm{eval}(g,f) = f$ for all $g \in F^{B} \setminus F^{C}$.  

\textbf{Case 2}:  $f \in O$ and $f$ is $E$-equivalent to some $c \in O^{\langle A \rangle}$.  

In this case, we define a structure $D$ whose underlying set is $B \cup C  = B \cup \{f\}$.  We interpret $F^{D} = F^{B}$ and $O^{D} = O^{B} \cup \{f\}$, then we intepret $E^{D}$ to extend $E^{B}$ and $E^{C}$ with $[c]^{D}_{E} = [c]^{B}_{E} \cup \{f\}$.  Then we define $\mathrm{eval}^{D}$ to extend $\mathrm{eval}^{B}$ by setting $\mathrm{eval}^{D}(g,f) = \mathrm{eval}^{B}(g,c)$ for all $g \in F^{B}$.  Note that this extends $\mathrm{eval}^{C}$ as well.  

\textbf{Case 3}:  $f \in F$.  

In this case, as before, we define a structure $D$ whose underlying set is $B \cup C \cup \{*_{x} : x \in (O^{B}/E) \setminus (O^{C}/E)\}$ where each $*_{x}$ is a new formal element indexed by an equivalence class of $B$ which is not represented by any element of $\langle a \rangle$.  We interpret $F^{D} = F^{B} \cup F^{C} = F^{B} \cup \{f\}$ and $O^{D} = O^{B} \cup O^{C} \cup  \{*_{x} : x \in (O^{B}/E) \setminus (O^{C}/E)\}$, then we intepret $E^{D}$ to be the equivalence relation generated by $E^{B} \cup E^{C}$ (which extends both $E^{B}$ and $E^{C}$) and the condition that $*_{x}$ is in the equivalence class $x$ for each $x \in (O^{B}/E) \setminus (O^{C}/E)$. Then to define $\mathrm{eval}^{D}$, extending $\mathrm{eval}^{B}$ and $\mathrm{eval}^{C}$, we must define $\mathrm{eval}^{D}(f,c)$ for all $c \in F^{B}$ which are not $E^{D}$-equivalent an element of $C$.  For any such $c$, we define $\mathrm{eval}^{D}(f,c) = *_{[c]_{E}}$.  

In each case, $D$ extends $B$ and $C$ and one can check and $A$-indiscernibility of $\overline{b}$ that, in $D$, $\overline{b}_{\geq \alpha}$ is quantifier-free indiscernible over $C$.  We may embed $D$ into $\mathbb{M}$ over $B$, and then the image $f'$ of $f$ along this embedding satisfies $\models \varphi(f';a)$ and $\overline{b}_{\geq \alpha}$ is $Af'$ indiscernible. This shows that $\varphi(x;a)$ does not shred over $A$.  
\end{proof}

\begin{prop}
The theory $T^{*}_{1}$ is a non-simple NSOP$_{1}$ theory with $\kappa_{\mathrm{shred}}(T^{*}_{1}) = \aleph_{0}$.  
\end{prop}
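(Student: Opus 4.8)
The statement has three parts. That $T^{*}_{1}$ is a non-simple NSOP$_{1}$ theory is established in \cite[Subsection 9.2]{kaplan2017kim}, so I would simply quote that. The content is the identity $\kappa_{\mathrm{shred}}(T^{*}_{1}) = \aleph_{0}$, and by Theorem \ref{one var thm} it suffices to prove $\kappa^{1}_{\mathrm{shred}}(T^{*}_{1}) \leq \aleph_{0}$. As in the treatment of $T_{3}$, I would establish the stronger fact that there is no shredding chain of length two in a single variable: there is no set $A$ and no formulas $\varphi_{0}(x;a_{0})$, $\varphi_{1}(x;a_{1})$ with $l(x) = 1$ such that $\varphi_{0}(x;a_{0})$ shreds over $A$, $\varphi_{1}(x;a_{1})$ shreds over $Aa_{0}$, and $\{\varphi_{0}(x;a_{0}), \varphi_{1}(x;a_{1})\}$ is consistent. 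This suffices, since any shredding chain of length $\aleph_{0}$ produces such a configuration: restricting a witnessing chain $\langle M_{i} : i \leq \omega\rangle$, $p \in S^{1}(M_{\omega})$, to the first three models and applying Corollary \ref{base monotonicity} to lower the base of the second shredding formula from $M_{1}$ to $M_{0}a_{0}$ yields exactly such $A = M_{0}$, $\varphi_{0}$, $\varphi_{1}$.

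So suppose toward a contradiction that $A$, $\varphi_{0}(x;a_{0})$, $\varphi_{1}(x;a_{1})$ are as above; fix $d \models \{\varphi_{0}(x;a_{0}), \varphi_{1}(x;a_{1})\}$ and, by the definition of shredding over $Aa_{0}$, fix an $Aa_{0}$-indiscernible sequence $\overline{b}$ such that no end segment of $\overline{b}$ is indiscernible over $Aa_{0}c$ for any $c \models \varphi_{1}(x;a_{1})$. In each case below the goal is to contradict this by producing an end segment of $\overline{b}$ that is $Aa_{0}d$-indiscernible. \emph{Case 1: $\varphi_{0}(x;a_{0})$ is algebraic.} Since $\mathrm{acl}(S) = \mathrm{dcl}(S) = \langle S\rangle$ for every set $S$ in $T^{*}_{1}$, we get $d \in \mathrm{dcl}(Aa_{0})$, so indiscernibility over $Aa_{0}$ and over $Aa_{0}d$ coincide; hence $\overline{b}$ itself is $Aa_{0}d$-indiscernible. \emph{Case 2: $\varphi_{0}(x;a_{0})$ is non-algebraic.} Then Lemma \ref{shredding char} supplies $c_{0} \in Aa_{0}$ that is $E$-equivalent to no element of $A$ and satisfies $\varphi_{0}(x;a_{0}) \vdash E(x,c_{0})$; in particular $\varphi_{0}(x;a_{0}) \vdash x \in O$, so $d \in O$ and $E(d, c_{0})$ with $c_{0} \in Aa_{0}$.

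To finish Case 2 I would analyze quantifier-free types over $Aa_{0}d$. Because $d \in O$, in any term it can occur only as the second argument of $\mathrm{eval}$, and the axioms of $T_{1}$ give $\mathrm{eval}(f,d) = \mathrm{eval}(f,c_{0})$ for all $f \in F$ and $E(e,d) \leftrightarrow E(e,c_{0})$ for all $e \in O$; since $c_{0} \in Aa_{0}$, every quantifier-free fact about a tuple from $\overline{b}$ over $Aa_{0}d$ is either already a fact over $Aa_{0}$ — and hence constant along $\overline{b}$ by $Aa_{0}$-indiscernibility — or has the form ``$t(\overline{b}', Aa_{0}) = d$'' for a term $t$ and a finite tuple $\overline{b}'$ from $\overline{b}$; as $\overline{b}$ is $Aa_{0}$-indiscernible the values $t(\overline{b}', Aa_{0})$ follow one of boundedly many patterns, and as $d$ is a single element one checks that, after passing to a suitable end segment of $\overline{b}$, the quantifier-free type over $Aa_{0}d$ of an increasing tuple is determined by its (constant) quantifier-free type over $Aa_{0}$. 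Thus an end segment of $\overline{b}$ is $Aa_{0}d$-indiscernible, the desired contradiction. Once both cases are settled, $\kappa^{1}_{\mathrm{shred}}(T^{*}_{1}) = \aleph_{0}$, hence $\kappa_{\mathrm{shred}}(T^{*}_{1}) = \aleph_{0}$ by Theorem \ref{one var thm}, and with the cited facts the proof is complete. The main obstacle is precisely this last quantifier-free bookkeeping in Case 2 — making rigorous that absorbing a single $O$-element which is $E$-equivalent to a parameter already in the base cannot destroy the indiscernibility of an end segment; the $\mathrm{eval}$-axioms and the description of algebraic closure in $T^{*}_{1}$ are exactly what makes this work.
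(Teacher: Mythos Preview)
Your approach is correct and even yields a slightly sharper bound than the paper (no shredding chain of length $2$ rather than $3$), but the route differs. The paper does not attempt your Case~2 analysis at all; instead it applies Lemma~\ref{shredding char} to \emph{both} $\varphi_{0}$ (over $A$) and $\varphi_{1}$ (over $Aa_{0}$): if both are non-algebraic then $\varphi_{0}$ forces $x$ into an $E$-class represented in $Aa_{0}$ while $\varphi_{1}$ forces $x$ into an $E$-class \emph{not} represented in $Aa_{0}$, so $\{\varphi_{0},\varphi_{1}\}$ is already inconsistent. Hence in any chain of length $3$ one of $\varphi_{0},\varphi_{1}$ must be algebraic, so any common realization lies in $\mathrm{acl}(Aa_{0}a_{1})$, and the $Aa_{0}a_{1}$-indiscernible witness for $\varphi_{2}$ is then automatically indiscernible over that realization --- contradiction. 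This completely sidesteps the quantifier-free bookkeeping of your Case~2: rather than analyze by hand how adjoining a single $O$-element in a known $E$-class interacts with an arbitrary $Aa_{0}$-indiscernible sequence, the paper simply invokes Lemma~\ref{shredding char} once more. Your Case~2 does go through --- the point you leave implicit is that any term over $Aa_{0}\overline{b}'$ that could equal $d$ reduces, via $\mathrm{eval}(f,d)=\mathrm{eval}(f,c_{0})$ and $\mathrm{eval}(f,\mathrm{eval}(f',o))=\mathrm{eval}(f,o)$, to either an element of $\langle Aa_0\rangle$ or a term depending on a single coordinate of a single $b_i$, and such one-index sequences are either constant or injective by $Aa_{0}$-indiscernibility, so finitely many indices suffice --- but the paper's trade of one extra step in the chain for a clean two-line argument is arguably the better bargain.
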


\begin{proof}
By Theorem \ref{one var thm}, it suffices to show $\kappa^{1}_{\mathrm{shred}}(T^{*}_{1}) = \aleph_{0}$.  Note that if $\varphi_{0}(x;a_{0})$ shreds over $A$ and $\varphi_{1}(x;a_{1})$ shreds over $Aa_{0}$ with $l(x) = 1$ and both $\varphi_{0}$ and $\varphi_{1}$ are non-algebraic, then by Lemma \ref{shredding char}, we must have both that $\varphi_{0}(x;a_{0})$ implies that $x$ is in an equivalence class represented by an element of $a_{0}$ and $\varphi_{1}(x;a_{1})$ implies $x$ is in an equivalence class of an element of $a_{1}$ not represented by an element of $Aa_{0}$.  This implies $\{\varphi_{0}(x;a_{0}), \varphi_{1}(x;a_{1})\}$ is inconsistent.  

Now suppose $\varphi_{i}(x;a_{i})$ are formulas with $l(x) = 1$ for $i = 0,1,2$, such that $\varphi_{i}(x;a_{i})$ shreds over $Aa_{< i}$ for $i = 0,1,2$ and $\{\varphi_{i}(x;a_{i}) : i < 2\}$ is consistent.  Then, by the first paragraph, one of $\varphi_{0}(x;a_{0})$ and $\varphi_{1}(x;a_{1})$ must be algebraic.  Hence if $f \models \{\varphi_{i}(x;a_{i}) : i < 2\}$, then $f \in \mathrm{acl}(Aa_{0}a_{1})$.  But any $Aa_{0}a_{1}$-indiscernible sequence is automatically $\mathrm{acl}(Aa_{0}a_{1})$-indiscernible and therefore $Aa_{0}a_{1}f$-indiscernible.  It follows that $\varphi_{2}(x;a_{2})$ cannot shred over $Aa_{0}a_{1}$, a contradiction.  Therefore $\kappa^{1}_{\mathrm{shred}}(T) = \aleph_{0}$.  
\end{proof}

The following theory is a variation on the generic theory of selector functions $T^{*}_{1}$ considered above.  The language $L$ for our example consists of unary predicates $F,O_{0},O_{1}$, and $O$, binary relations $E, R_{0},$ and $R_{1}$, and a binary function $\text{eval}$.  The theory $T$ consists of the following axioms:
\begin{enumerate}
\item $F$, $O_{0}$, and $O_{1}$ partition the universe and $O = O_{0} \cup O_{1}$.
\item $E \subseteq O^{2}$ is an equivalence relation.
\item $\text{eval}: F \times O \to O_{0}$ is a selector function:
\begin{enumerate}
\item $(\forall x \in F)(\forall y \in O)\left[E(y,\text{eval}(x,y))\right]$.
\item $(\forall x \in F)(\forall y,z \in O)\left[E(y,z) \to \text{eval}(x,y) = \text{eval}(x,z)\right]$.
\end{enumerate}
\item The relations $R_{0}$, $R_{1}$ satisfy:
\begin{enumerate}
\item $R_{0} \subseteq O_{0} \times O_{1}$.
\item $R_{1} \subseteq F \times O_{1}$.
\item $(\forall x \in F)(\forall z \in O_{1})\left[ R_{0}(\text{eval}(x,z),z) \leftrightarrow R_{1}(x,z)\right]$.
\end{enumerate}
\end{enumerate}

Define $\mathbb{K}$ to be the class of finite models of $T$.  

\begin{lem} \label{NSOP1 Fraisse class}
The class $\mathbb{K}$ is a Fra\"iss\'e class.  Moreover, it is uniformly locally finite.  
\end{lem}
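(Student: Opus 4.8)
The plan is to check the four defining properties of a Fra\"iss\'e class (closure under isomorphism, the hereditary property, the joint embedding property, amalgamation) together with uniform local finiteness; the only substantial point is amalgamation. The key observation organizing everything is that the reduct of $T$ to $\{F,O_0,O_1,O,E,\mathrm{eval}\}$ is a variant of the generic selector-function theory $T^*_1$ of \cite[Subsection 9.2]{kaplan2017kim}, while $R_0$ is an unconstrained relation on $O_0\times O_1$ and, by axiom 4(c), $R_1$ is a \emph{defined} predicate, $R_1(x,z)\leftrightarrow R_0(\mathrm{eval}(x,z),z)$; so the amalgamation problem reduces to amalgamating selector functions and then reading off $R_0$ and $R_1$.

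\emph{The easy parts.} Closure under isomorphism is immediate. Every axiom of $T$ is $\forall$-expressible (the range constraint on $\mathrm{eval}$ asserts $O_0(\mathrm{eval}(x,y))$ only under the universal hypothesis $F(x)\wedge O(y)$, and one adopts the standard convention $\mathrm{eval}(x,y)=y$ for $(x,y)\notin F\times O$, again universal), so substructures of members of $\mathbb K$ lie in $\mathbb K$; this gives HP, and substructures of finite structures are finite. For uniform local finiteness, I would show that if $S$ generates a substructure then $\langle S\rangle = S\cup\{\mathrm{eval}(x,y): x\in S\cap F,\ y\in S\cap O\}$: the new elements lie in $O_0\subseteq O$ but not in $F$, and $\mathrm{eval}(x,\mathrm{eval}(y,z))=\mathrm{eval}(x,z)$ by the $E$-invariance axiom whenever $x\in F$ (since $E(z,\mathrm{eval}(y,z))$), so closure is reached after one step and $|\langle S\rangle|\le |S|+|S|^2$; in particular $\mathbb K$ has only countably many isomorphism types. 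Finally, JEP is the case $A=\emptyset$ of AP, since the empty structure lies in $\mathbb K$.

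\emph{Amalgamation.} Given $A\le B$ and $A\le C$ in $\mathbb K$, identify $A=B\cap C$ and build $D$ as follows. Let $E^D$ be the transitive closure of $E^B\cup E^C$; a routine shortcut argument (a chain through $C$ enters and leaves $C$ only at points of $A$, where $E^B$ and $E^C$ agree) shows $E^D$ restricts to $E^B$ on $O^B$ and to $E^C$ on $O^C$. If $F^B\ne\emptyset$ then every $E^B$-class meets $O_0^B$ (apply $\mathrm{eval}^B$), and likewise for $C$; to each $E^D$-class disjoint from $O_0^B\cup O_0^C$ I adjoin one fresh element placed in $O_0^D$---the device already used in the analysis of $T^*_1$. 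Put $F^D=F^B\cup F^C$, $O_1^D=O_1^B\cup O_1^C$, $O^D=O_0^D\cup O_1^D$, and $R_0^D=R_0^B\cup R_0^C$ (well defined and restricting correctly, since $R_0^B$ and $R_0^C$ agree on their only common pairs, those in $O_0^A\times O_1^A$). For $x\in F^D$ define $\mathrm{eval}^D(x,-)$ to be constant on each $E^D$-class $\mathcal E$: if $\mathcal E$ meets $O^B$ and $x\in F^B$ its value is $\mathrm{eval}^B(x,y_0)$ for any $y_0\in\mathcal E\cap O^B$ (independent of $y_0$, as $\mathcal E\cap O^B$ is one $E^B$-class, and consistent with the $C$-side recipe when $x\in F^A$ because $\mathrm{eval}^B(x,y_0)=\mathrm{eval}^A(x,a)=\mathrm{eval}^C(x,y_1)$ whenever $y_0,y_1$ are linked through $a\in A$); on classes disjoint from $O^B$ (resp.\ $O^C$) pick any member of $\mathcal E\cap O_0^D$, nonempty by construction. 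Finally set $R_1^D=\{(x,z)\in F^D\times O_1^D: R_0^D(\mathrm{eval}^D(x,z),z)\}$, which forces axiom 4(c). One then verifies $D\models T$ and that $B,C$ embed into $D$ over $A$; the only non-bookkeeping point is that $\mathrm{eval}^D$ and $R_1^D$ restrict correctly to $B$ and $C$, which follows from the restriction property of $E^D$ together with the fact that $R_1^B,R_1^C$ obey the same biconditional internally.

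\emph{Main obstacle.} The delicate step is the amalgamation of $\mathrm{eval}$: simultaneously making $\mathrm{eval}^D$ constant on $E^D$-classes, total (which is exactly where the fresh $O_0$-elements are needed, for classes that would otherwise have no legal value), and compatible with both $\mathrm{eval}^B$ and $\mathrm{eval}^C$---the compatibility resting on the observation that on a class merged through $A$ the two prescriptions already coincide. Everything concerning $R_0$ and $R_1$ is essentially free, since $R_0$ is amalgamated disjointly and $R_1$ is a defined predicate.
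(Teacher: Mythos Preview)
Your proposal is correct and follows essentially the same approach as the paper's proof: amalgamate the equivalence relations freely, take $R_0^D=R_0^B\cup R_0^C$, build $\mathrm{eval}^D$ class-by-class extending both $\mathrm{eval}^B$ and $\mathrm{eval}^C$, and then recover $R_1^D$ from the biconditional in axiom 4(c). The only cosmetic difference is that the paper first enlarges $B$ and $C$ separately so that every $E$-class already meets $O_0$, whereas you adjoin the fresh $O_0$-representatives directly to the amalgam $D$; both devices serve the identical purpose of guaranteeing $\mathrm{eval}^D$ a legal value on classes coming from the other side, and the uniform local finiteness bound $|\langle S\rangle|\le |S|+|S|^2$ matches the paper's.
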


\begin{proof}
HP is clear as the axioms of $T$ are universal.  The argument for JEP is identical to that for SAP, so we show SAP.  Suppose $A,B,C \in \mathbb{K}$ where $A \subseteq B,C$ and $B \cap C = A$.  It suffices to define a $L$-structure with domain $D = B \cup C$, extending both $B$ and $C$.  First, note that if $F^{B}$ is non-empty, then every $E^{B}$-class intersects $O^{B}_{0}$, but if $F^{B} = \emptyset$, it is possible that there are $E^{B}$-equivalence classes disjoint from $O_{0}^{B}$.  In this latter case, we can extend $B$ to $B'$ so that each equivalence class contains an element of $O_{0}$:  Let $(K_{i})_{i < l}$ list the $E^{B}$-classes $K$ of $B$ such that $O_{0}^{B} \cap K = \emptyset$.  Let $B'$ be the $L$-structure with underlying set $B \cup \{*_{i} : i < l\}$ where the $*_{i}$ are new formal elements.  Consider $B'$ as an $L$-structure via the the following interpretations: for the unary predicates, interpret $F^{B'} = F^{B} = \emptyset$, $O_{0}^{B'} = O_{0}^{B} \cup \{*_{i} : i < l\}$, $O_{1}^{B'} = O_{1}^{B}$, and $O^{B'} = O_{0}^{B'} \cup O_{1}^{B'}$.  Let $R_{0}^{B'} = R_{0}^{B}$, $R_{1}^{B'} = R_{1}^{B}$, and let $E^{B'}$ be the equivalence relation generated by $E^{B} \cup \{(b,*_{i}) : i < l, b \in K_{i}\}$.  As $F^{B} = F^{B'} = \emptyset$, we can only define $\text{eval}^{B'} : F^{B'} \times O^{B'} \to O_{0}^{B'}$ to be the empty function.  It is clear that $B'$ is in $\mathbb{K}$, extends $B$, and every equivalence class not represented by an element of $A$ contains an element of $O_{0}$.  By a symmetric argument, we may also extend $C$ to $C'$ so that every $E^{C}$-class not represented by an element of $A$ contains an element of $O_{0}^{C'}$.  Replacing $B$ and $C$ by $B'$ and $C'$ respectively, we may assume that all classes of $B$ and $C$ are either represented by an element of $A$ or by an element of $O_{0}^{B}$ or $O_{0}^{C}$ respectively.  

Now we describe the construction of $D$.  Interpret $O_{0}^{D}$, $O_{1}^{D}$, and $F^{D}$ by $O_{i}^{D} = O_{i}^{B} \cup O_{i}^{C}$ for $i = 0,1$, $O^{D} = O_{0}^{D} \cup O_{1}^{D}$, and $F^{D} = F^{B} \cup F^{C}$.  Let $E^{D}$ be the equivalence relation generated by $E^{B} \cup E^{C}$.  It follows that if $b \in B$, $c \in C$ and $(b,c) \in E^{D}$, then there is some $a \in A$ so that $(a,b) \in E^{B}$ and $(a,c) \in E^{C}$ and, moreover, $(O^{D}, E^{D})$ extends both $(O^{B}, E^{B})$ and $(O^{C}, E^{C})$ as equivalence relations.  Put $R_{0}^{D} = R_{0}^{B} \cup R_{0}^{C}$.  

Next we define the interpretation $\text{eval}^{D}$.  Let $\{a_{i} : i < k_{0}\}$ enumerate a collection of representatives for the $E^{A}$-classes in $A$.  Then let $\{b_{i} : i < k_{1}\}$ and $\{c_{i} : i < k_{2}\}$ enumerate representatives for the $E^{B}$- and $E^{C}$-classes of elements not represented by an element of $A$, respectively.  By the remarks above, we may assume each $b_{i}$ and $c_{i}$ are in $O_{0}^{D}$.  Then every element of $O^{D}$ is equivalent to a unique element of 
$$
X = \{a_{i} : i < k_{0}\} \cup \{b_{i} : i < k_{1}\} \cup \{c_{i} : i < k_{2}\}.  
$$
Suppose $d \in X$.  If $f \in F^{A}$, define $\mathrm{eval}^{D}(f,d) = \mathrm{eval}^{B}(f,d)$ if $d \in B$ and $\mathrm{eval}^{D}(f,d) = \mathrm{eval}^{C}(f,d)$ if $d \in C$, which is well-defined as $A$ is a substructure of both $B$ and $C$.  If $f \in F^{B} \setminus F^{A}$, define $\text{eval}^{D}(f,d) = \text{eval}^{B}(f,d)$ if $d \in B$ and $\text{eval}^{D}(f,d) = d$ otherwise.  Likewise, if $f \in F^{C} \setminus F^{A}$, put $\text{eval}^{D}(f,d) = \text{eval}^{C}(f,d)$ if $d \in C$ and $\text{eval}^{C}(f,c) = c$ otherwise.  This defines $\text{eval}$ on $F^{D} \times X$.  More generally, if $f \in F^{D}$ and $e \in O^{D}$, define $\text{eval}^{D}(f,e) = \text{eval}^{D}(f,d)$ for the unique $d \in X$ equivalent to $e$.  

To complete the construction, we must describe the interpretation of $R_{1}^{D}$.  Put 
$$
R_{1}^{D} = R_{1}^{B} \cup R_{1}^{C} \cup \{(f,d) \in F^{D} \times O_{1}^{D} : (\text{eval}^{D}(f,d),d) \in R_{0}^{D}\}.
$$  
We check that this defines an extension of $B$ and $C$.  If $b \in F^{B}$, $b' \in O_{1}^{B}$, and $(\text{eval}^{D}(b,b'),b') \in R_{0}^{D}$, then $(\text{eval}^{D}(b,b'),b') \in R_{0}^{B}$ and $\text{eval}^{D}(b,b') = \text{eval}^{B}(b,b')$ so $(\text{eval}^{B}(b,b'),b') \in R_{0}^{B}$ and therefore $R_{1}^{B}(b,b')$.  This shows $R_{1}^{D} \upharpoonright B = R_{1}^{B}$.  Likewise $R_{1}^{D} \upharpoonright C = R_{1}^{C}$.  Therefore $D$ extends $B$ and $C$.  

Now to conclude we must show $D \in \mathbb{K}$.  It is clear that $D$ satisfies axioms (1)-(3), so we are left with checking (4).  Suppose $(f,d) \in F^{D} \times O_{1}^{D} \setminus (F^{B} \times O_{1}^{B} \cup F^{C} \times O_{1}^{C})$ and $d' = \text{eval}^{D}(f,d)$.  Then, by definition, if $(d,d') \in R_{0}^{D}$, then $(f,d) \in R_{1}^{D}$.  On the other hand, if $(f,d) \in R_{1}^{D}$ then, because $(f,d) \not\in R_{1}^{B} \cup R_{1}^{C}$, we must have $(d,d') \in R_{0}^{D}$, again by the definition of $R_{1}^{D}$.  It is clear that if $(f,d) \in F^{B} \times O_{1}^{B} \cup F^{C} \times O_{1}^{C}$ then $(f,d) \in R_{0}^{D}$ if and only if $(f,d) \in R_{1}^{D}$ because $D$ extends $B$ and $C$ which are in $\mathbb{K}$.  Therefore $D$ satisfies axiom (4) which shows $D \in \mathbb{K}$.  This shows $\mathbb{K}$ has the amalgamation property.  

Finally, note that a structure in $\mathbb{K}$ generated by $k$ elements is obtained by applying $\leq k$ functions of the form $\text{eval}(f,-)$ to $\leq k$ elements in $O$, so has cardinality $\leq k^{2}+k$.  This shows $\mathbb{K}$ is uniformly locally finite.   
\end{proof}

\begin{cor}
$T$ has a model completion $T^{*}$ which is the theory of the Fra\"iss\'e limit of $\mathbb{K}$.  The theory $T^{*}$ eliminates quantifiers and is $\aleph_{0}$-categorical.  
\end{cor}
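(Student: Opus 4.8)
The plan is to deduce this directly from Lemma~\ref{NSOP1 Fraisse class} via the standard correspondence between Fra\"iss\'e classes and model completions, following the treatment of the analogous theory $T^{*}_{1}$ in \cite[Subsection 9.2]{kaplan2017kim}. Since $\mathbb{K}$ is a Fra\"iss\'e class, let $M$ be its Fra\"iss\'e limit --- the unique, up to isomorphism, countable $L$-structure which is ultrahomogeneous and has age $\mathbb{K}$ --- and set $T^{*} = \mathrm{Th}(M)$. The first point to record is that $T$ is a universal theory: each axiom (1)--(4) can be written in $\forall$-form (e.g. ``$F,O_{0},O_{1}$ partition the universe'', ``$E$ is an equivalence relation on $O$'', the selector conditions, and axiom (4) are all universally quantified implications between quantifier-free formulas). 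Together with the bound established in Lemma~\ref{NSOP1 Fraisse class}, namely that a substructure generated by $k$ elements has size at most $k^{2}+k$, this shows that every finitely generated substructure of a model of $T$ is finite, hence belongs to $\mathbb{K}$; in particular $\mathbb{K}$ is exactly the class of finitely generated models of $T$.

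For $\aleph_{0}$-categoricity and quantifier elimination I would invoke the general theory of uniformly locally finite Fra\"iss\'e classes in a finite language. Concretely: for each $n$ there are only finitely many isomorphism types of members of $\mathbb{K}$ generated by $n$ elements, since the language is finite and each such structure has size at most $n^{2}+n$; ultrahomogeneity of $M$ then guarantees that the $\mathrm{Aut}(M)$-orbit of an $n$-tuple $\overline{b}$ from $M$ is determined by the isomorphism type, as an enumerated structure, of the finitely generated substructure $\langle\overline{b}\rangle$, equivalently by the quantifier-free type of $\overline{b}$. A back-and-forth on finite substructures of models of $T^{*}$ now yields simultaneously that $M$ is the unique countable model of $T^{*}$ (hence $T^{*}$ is $\aleph_{0}$-categorical) and that any two tuples in models of $T^{*}$ with the same quantifier-free type have the same type, i.e. $T^{*}$ eliminates quantifiers; this is precisely the argument carried out for $T^{*}_{1}$ in \cite[Subsection 9.2]{kaplan2017kim}.

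It remains to identify $T^{*}$ as the model completion of $T$. First, $M\models T$: since $T$ is universal, a universal sentence holds in $M$ as soon as it holds in every finitely generated substructure of $M$, and these all lie in $\mathbb{K}$, the class of finite models of $T$. Hence $T\subseteq T^{*}$, so every model of $T^{*}$ is a model of $T$. Conversely, given any $N\models T$, every finite fragment of $\mathrm{Diag}(N)$ mentions only a finitely generated (hence finite) substructure of $N$, which lies in $\mathbb{K}=\mathrm{Age}(M)$ and so embeds into $M\models T^{*}$; by compactness $T^{*}\cup\mathrm{Diag}(N)$ is consistent, so $N$ embeds into a model of $T^{*}$. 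Thus $T$ and $T^{*}$ are mutually model-consistent, and $T^{*}$ is model complete because it eliminates quantifiers; so $T^{*}$ is a model companion of $T$. Finally, since $T$ is universal and its class $\mathbb{K}$ of finitely generated models has the amalgamation property by Lemma~\ref{NSOP1 Fraisse class}, $T$ itself has the amalgamation property (an amalgam of arbitrary models of $T$ is built as a directed colimit of amalgams of their finitely generated substructures), so for every $A\models T$ the theory $T^{*}\cup\mathrm{Diag}(A)$ is complete. Therefore $T^{*}$ is the model completion of $T$.

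I expect the only mildly delicate points to be the passage from amalgamation for the finite class $\mathbb{K}$ to amalgamation for the full class of models of $T$, and the verification that $\aleph_{0}$-categoricity and quantifier elimination genuinely follow from uniform local finiteness in a finite language; both are standard, and the second is already spelled out in \cite{kaplan2017kim} for the closely related theory $T^{*}_{1}$, so in the write-up it should suffice to point there. Everything else is formal.
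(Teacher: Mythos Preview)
Your proposal is correct and takes essentially the same approach as the paper, which in fact states the corollary without proof, treating it as an immediate consequence of Lemma~\ref{NSOP1 Fraisse class} and the standard correspondence between uniformly locally finite Fra\"iss\'e classes in a finite language and $\aleph_0$-categorical model completions with quantifier elimination. Your write-up simply unpacks this standard deduction in detail, which is entirely appropriate.
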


We will write $\mathbb{M} \models T^{*}$ for a monster model of $T^{*}$.  We will now show that $T^{*}$ is NSOP$_{1}$ by appealing to the following criterion:  

\begin{fact} \label{criterion} \cite[Proposition 5.8]{ArtemNick} %\cite[Theorem 9.1]{kaplan2017kim}
Assume there is an \(\text{Aut}(\mathbb{M})\)-invariant ternary relation \(\ind\) on small subsets of \(\mathbb{M}\) satisfying the following properties, for an arbitrary $M\prec \mathbb{M}$ and arbitrary tuples from $\mathbb{M}$:
\begin{enumerate}
\item Strong finite character:  if \(a \nind_{M} b\), then there is a formula \(\varphi(x,b,m) \in \text{tp}(a/Mb)\) such that for any \(a' \models \varphi(x,b,m)\), \(a' \nind_{M} b\).
\item Existence over models: \(a \ind_{M} M\).
\item Monotonicity: if \(aa' \ind_{M} bb'\), then \(a \ind_{M} b\).
\item Symmetry: if \(a \ind_{M} b\), then \(b \ind_{M} a\).
\item The independence theorem: if \(a \ind_{M} b\), \(a' \ind_{M} c\), \(b \ind_{M} c\) and $a \equiv_{M} a'$, then there exists $a''$ with $a'' \equiv_{Mb} a$, $a'' \equiv_{Mc} a'$, and $a'' \ind_{M} bc$.
\end{enumerate}
Then \(T\) is NSOP\(_{1}\).% and $\ind$ strengthens $\ind^{K}$, i.e.\ if $a \ind_{M} b$, then $a \ind^{K}_{M} b$.  If $\ind$ satisfies the following additional property, then $\ind = \ind^{K}$ over models, i.e.\ $a\ind_M b$ if and only if $a\ind^{K}_M b$:
%\begin{enumerate}
%\addtocounter{enumi}{5}
%\item Witnessing:  if $a \nind_{M} b$, then there exists a formula $\varphi(x,b,m)\in \text{tp}(a/Mb)$, such that for any Morley sequence $(b_{i})_{i <\omega}$ over $M$ in a global $M$-finitely satisfiable type extending $\text{tp}(b/M)$, $\{\varphi(x,b_{i},m) \mid i <\omega\}$ is inconsistent. 
%\end{enumerate}
\end{fact}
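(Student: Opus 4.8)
The plan is to argue by contraposition: assuming an $\mathrm{Aut}(\mathbb{M})$-invariant ternary relation $\ind$ with properties (1)--(5), I would show that $T$ is $\mathrm{NSOP}_{1}$. So suppose some $\varphi(x;y)$ has $\mathrm{SOP}_{1}$ modulo $T$, witnessed by a tree $(a_{\eta})_{\eta \in 2^{<\omega}}$ as above; I aim for a contradiction. The first step is to put this witness into a convenient \emph{linear} normal form over a model. Using Ramsey's theorem, compactness, and the tree-modelling property, I would replace $(a_{\eta})$ by a sufficiently tree-indiscernible tree over a small model $M \prec \mathbb{M}$, read off the all-zeros branch together with the siblings branching immediately off it, and extract from this data an $M$-indiscernible sequence. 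The outcome is an $M$-indiscernible sequence $\langle b_{i} : i < \omega \rangle$ together with tuples $b_{i}'$ ($i<\omega$) such that $\{\varphi(x;b_{i}) : i < \omega\}$ is consistent, $\{\varphi(x;b_{i}'),\varphi(x;b_{j})\}$ is inconsistent whenever $i < j$, and the ``switch at level $i$'' is type-preserving over $M$ in the sense that $b_{i}' \equiv_{M b_{\le i}} b_{i+1}$. In particular, applying an $M$-automorphism $\tau$ with $\tau(b_{1}) = b_{0}'$, the sequence $\langle \tau(b_{i+1}) : i < \omega\rangle$ is $M$-indiscernible, starts with $b_{0}'$, and carries a consistent $\varphi$-type, while $\{\varphi(x;b_{0}'),\varphi(x;b_{1})\}$ is inconsistent. (This is the standard combinatorial description of $\mathrm{SOP}_{1}$, which I would derive in detail.)

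The engine is then an amalgamation of realizations via the independence theorem, whose target is a single $d^{\ast}$ with $\models \varphi(d^{\ast};b_{1}) \wedge \varphi(d^{\ast};b_{0}')$ --- contradicting the inconsistency just obtained. Take a realization $d$ of (enough of) the branch type $\{\varphi(x;b_{i}) : i \ge 1\}$, so $\models \varphi(d;b_{1})$; then the conjugate $d' := \tau(d)$ satisfies $\models \varphi(d';b_{0}')$ and $d \equiv_{M} d'$ by invariance. If one can further arrange $d \ind_{M} b_{1}$ (hence $d' \ind_{M} b_{0}'$ by invariance) and $b_{1} \ind_{M} b_{0}'$, then the independence theorem (5), using monotonicity (3) and symmetry (4), yields $d^{\ast}$ with $d^{\ast} \equiv_{M b_{1}} d$ and $d^{\ast} \equiv_{M b_{0}'} d'$, whence $\models \varphi(d^{\ast};b_{1}) \wedge \varphi(d^{\ast};b_{0}')$. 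Strong finite character (1) is what makes the whole configuration first-order, so that compactness applies and the inconsistency in the normal form is witnessed by a single formula; existence over models (2) supplies the base case for the extractions.

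The main obstacle, and where I expect the real work to be, is securing the two independences $d \ind_{M} b_{1}$ and $b_{1} \ind_{M} b_{0}'$ that feed the independence theorem, given that only \emph{existence} over models is assumed and there is no extension axiom available to freely add $\ind$-independent copies of a type. The fix is not to build the required $\ind$-independent indiscernible sequences by iterative extension, but to extract them from a long ambient sequence inside $\mathbb{M}$ via an Erd\H{o}s--Rado/Ramsey argument, checking with invariance and monotonicity that $\ind$-independence of successive blocks survives the extraction alongside the consistency/inconsistency pattern; in effect, the linear witness of the first step should be produced already in this ``$\ind$-spread-out'' form. Once this bookkeeping is in place, the contradiction follows immediately from the amalgamation step.
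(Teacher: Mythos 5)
The paper does not prove this statement at all---it is quoted as a Fact from Chernikov--Ramsey---so the only question is whether your sketch would actually go through, and as written it has a genuine gap exactly where you flag it. Your proposed fix for producing the independences $d \ind_{M} b_{1}$ and $b_{1} \ind_{M} b_{0}'$ (``extract from a long ambient sequence via Erd\H{o}s--Rado/Ramsey, checking that $\ind$-independence of successive blocks survives extraction'') does not work on its own, because extraction plus invariance can only \emph{transfer} instances of $\ind$ that already occur in the ambient sequence, and nothing in axioms (1)--(5) produces a single instance $b \ind_{M} c$ with both $b$ and $c$ outside $M$: existence only gives $a \ind_{M} M$, and monotonicity/symmetry never move the base. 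The missing idea, which is the engine of the actual Chernikov--Ramsey argument, is that strong finite character together with existence (plus symmetry and monotonicity) implies that coheir independence entails $\ind$-independence: if $\operatorname{tp}(a/Mb)$ is finitely satisfiable in $M$, then $a \ind_{M} b$, since any formula witnessing $a \nind_{M} b$ would have a realization inside $M$, which is independent from $b$ over $M$ by existence. One then has to build the SOP$_{1}$ witness in ``linear normal form'' not over an arbitrary model but as a sequence finitely satisfiable in (a Morley/coheir sequence over) a model $M$ found \emph{after} stretching the tree, via a standard extraction lemma (this is where the $\beth$-type cardinal bookkeeping genuinely enters); only then are the hypotheses of the independence theorem available.

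A second, related issue is your single-shot amalgamation: even granting $b_{1} \ind_{M} b_{0}'$ from finite satisfiability, the independence $d \ind_{M} b_{1}$ of a realization $d$ of the branch type from the parameters is not something you can arrange by choosing $d$ generically---there is no extension axiom. In the known proof this independence is produced by an induction along the array in which the \emph{conclusion} of the independence theorem, namely $a'' \ind_{M} bc$, is carried forward as part of the inductive hypothesis, with existence over $M$ as the base case; the contradiction with $2$-inconsistency of the ``sibling'' column then appears at the end of the induction rather than in one application. So your overall architecture (contrapositive, array/normal-form reformulation of SOP$_{1}$, amalgamation via the independence theorem) matches the real proof, but the two load-bearing mechanisms---coheir-implies-$\ind$ via strong finite character, and the inductive propagation of $a'' \ind_{M} bc$ over a model obtained by the extraction lemma---are absent, and without them the configuration you feed into axiom (5) cannot be certified.
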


\begin{defn}
Define a ternary relation $\ind^{*}$ on small subsets of $\mathbb{M}$ by:  $a \ind^{*}_{C} b$ if and only if
\begin{enumerate}
\item $\text{dcl}(aC)/E \cap \text{dcl}(bC) /E \subseteq \text{dcl}(C)/E$.
\item $\text{dcl}(aC) \cap \text{dcl}(bC) \subseteq \text{dcl}(C)$.  
\end{enumerate}
where $X/E = \{[x]_{E} :x \in X\}$ denotes the collection of $E$-classes represented by an element of $X$.  
\end{defn}

\begin{lem}\label{independencetheoremfor2functions}
The relation $\ind^{*}$ satisfies the independence theorem over models:  if $M \models T^{*}$, $a \equiv_{M} a'$, and, additionally, $a \ind^{*}_{M} B$, $a' \ind^{*}_{M} C$ and $B \ind^{*}_{M} C$ then there is $a''$ with $a'' \equiv_{MB} a$, $a'' \equiv_{MC} a'$, and $a'' \ind^{*}_{M} BC$.  
\end{lem}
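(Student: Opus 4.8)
The plan is to prove this by an explicit three-fold amalgamation of substructures, in the same spirit as the verification of amalgamation for $\mathbb{K}$ in Lemma \ref{NSOP1 Fraisse class} and the case analysis in Lemma \ref{shredding char}. First I would reduce to the case where $a,a',B,C$ enumerate substructures containing $M$: since in $T^{*}$ definable closure of a set coincides with the substructure it generates, and $\ind^{*}$ and $\equiv$ are $\mathrm{Aut}(\mathbb{M})$-invariant, we may assume $P := \mathrm{dcl}(Ma)$, $P' := \mathrm{dcl}(Ma')$, $Q := \mathrm{dcl}(MB)$, $R := \mathrm{dcl}(MC)$ are substructures and fix an isomorphism $\sigma\colon P \to P'$ over $M$ carrying the tuple $a$ to $a'$. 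Because $M$ is a model, $\mathrm{dcl}(M)=M$, so the three independence hypotheses become $P\cap Q = Q\cap R = M$ and $P'\cap R = M$, together with the corresponding statements modulo $E$ (e.g. $P/E \cap Q/E = M/E$), all taken inside $\mathbb{M}$.

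Next I would recall the shape of free amalgamation in $\mathbb{K}$ extracted from the proof of Lemma \ref{NSOP1 Fraisse class}: when substructures $U,V \supseteq M$ satisfy $U\cap V = M$ and $U/E\cap V/E = M/E$, the substructure generated by $U\cup V$ is, up to isomorphism over $U$ and over $V$, a canonical free amalgam $U\otimes_{M}V$ — after harmlessly passing to extensions in which every $E$-class meets $O_{0}$ (as in the passage from $B,C$ to $B',C'$ there), one interprets $F,O_{i},E,R_{0}$ by unions, defines $\mathrm{eval}$ on cross pairs by routing through the $O_{0}$-representative of the relevant $E$-class, and is then forced to interpret $R_{1}$ on cross pairs by axiom (4c). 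In particular $\mathrm{dcl}(MBC)\cong Q\otimes_{M}R$, $\mathrm{dcl}(MaB)\cong P\otimes_{M}Q$, and $\mathrm{dcl}(Ma'C)\cong P'\otimes_{M}R$, and each such amalgam again has all pairwise intersections (as sets and modulo $E$) equal to the base.

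The heart of the argument is to fill in the triangle. I would set $D_{1} := P\otimes_{M}Q$, identified with $\mathrm{dcl}(MaB)$, and $D_{2} := P\otimes_{M}R$, identified with $\mathrm{dcl}(Ma'C)$ via $\sigma$; both contain the common substructure $P$. Since the $E$-classes of $D_{1}\setminus P$ all come from $Q$ while those of $D_{2}\setminus P$ all come from $R$, and $Q/E\cap R/E = M/E$, the structures $D_{1}$ and $D_{2}$ meet, as sets and modulo $E$, exactly in $P$, so one may form $D := D_{1}\otimes_{P}D_{2}$ by the same recipe. One must check $D\models T$: axioms (1)--(3) are immediate, and for axiom (4c) the only pairs $(f,z)\in F^{D}\times O_{1}^{D}$ not already inside $D_{1}$ or $D_{2}$ are the cross pairs, on which $R_{1}^{D}$ was defined precisely so that (4c) holds — verbatim as in Lemma \ref{NSOP1 Fraisse class}. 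Moreover the substructure of $D$ generated by $Q\cup R$ is, by the disjointness bookkeeping, a copy of $Q\otimes_{M}R\cong\mathrm{dcl}(MBC)$, so by quantifier elimination / model-completeness of $T^{*}$ there is an embedding $D\to\mathbb{M}$ fixing $\mathrm{dcl}(MBC)$ pointwise; letting $a''$ be the image of $a\in P$, the image of $D_{1}$ gives $a''\equiv_{MB}a$, the image of $D_{2}$ together with $\sigma$ gives $a''\equiv_{MC}a'$, and the fact that in $D$ the copy of $P$ meets the copy of $Q\otimes_{M}R$ only in $M$ (as sets and modulo $E$) gives $a''\ind^{*}_{M}BC$.

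The step I expect to be the main obstacle is verifying $D\models T$, specifically that axiom (4c) survives the amalgamation: this is the only axiom in which $R_{1}$, $\mathrm{eval}$, and $R_{0}$ interact, so one must confirm that the value of $R_{1}$ forced on the new cross pairs is consistent with the marginals $D_{1},D_{2}$ and with $\mathrm{dcl}(MBC)$, and that no new elements beyond the $O_{0}$-representatives already present are needed — exactly the kind of case analysis appearing in Lemmas \ref{NSOP1 Fraisse class} and \ref{shredding char}. A secondary point requiring care is tracking, through the iterated amalgamation, that all of the ``intersection $=M$ modulo $E$'' disjointness statements are preserved, since these are what ultimately yield both $a''\ind^{*}_{M}BC$ and the embeddability of $\mathrm{dcl}(MBC)$ as a free amalgam.
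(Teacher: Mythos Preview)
Your overall strategy---construct an explicit amalgam $D$ containing copies of $\mathrm{dcl}(MaB)$, $\mathrm{dcl}(Ma'C)$, and $\mathrm{dcl}(MBC)$, then embed it into $\mathbb{M}$ over $\mathrm{dcl}(MBC)$---is the right one and matches the paper's. However, there is a genuine gap in the execution.

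The claim that ``the substructure generated by $U\cup V$ is, up to isomorphism over $U$ and over $V$, a canonical free amalgam $U\otimes_{M}V$'' is false. The substructure $\langle U\cup V\rangle$ inside $\mathbb{M}$ carries whatever cross-relations $\mathbb{M}$ happens to impose between $U\setminus M$ and $V\setminus M$; there is no reason these should be the free ones. Concretely, if $b\in O_{0}^{Q}\setminus M$ and $c\in O_{1}^{R}\setminus M$ lie in distinct $E$-classes, $\mathbb{M}$ may or may not satisfy $R_{0}(b,c)$, whereas the free amalgam of Lemma~\ref{NSOP1 Fraisse class} always gives $\neg R_{0}(b,c)$. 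Hence your assertion $\mathrm{dcl}(MBC)\cong Q\otimes_{M}R$ fails, and with it the final embedding step: after forming $D=D_{1}\otimes_{P}D_{2}$, the induced substructure on $Q\cup R$ is precisely the free amalgam $Q\otimes_{M}R$ (the cross-relations between $Q\setminus M\subseteq D_{1}\setminus P$ and $R\setminus M\subseteq D_{2}\setminus P$ having been made free), and this need not be isomorphic to the actual $\mathrm{dcl}(MBC)$. So $D$ cannot be embedded into $\mathbb{M}$ over $\mathrm{dcl}(MBC)$.

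The paper avoids this by organizing the three-fold amalgamation differently: it takes the \emph{actual} structure $\langle BC\rangle$ as one of the three pieces and glues onto it abstract copies of $\mathrm{dcl}(aB)$ and $\mathrm{dcl}(a'C)$ (each with its actual structure from $\mathbb{M}$), identified along $B$, along $C$, and along $\langle aM\rangle\cong\langle a'M\rangle$ via $\sigma$. The only relations that must be invented are those between the new realization of $\mathrm{tp}(a/M)$ and elements of $\langle BC\rangle$ lying in neither $B$ nor $C$; the structure on $\langle BC\rangle$ itself is never altered, so embedding over $\langle BC\rangle$ is automatic. Your iterated two-fold free amalgamation over $P$ cannot reproduce this, because it forces the $Q$--$R$ interface to be free rather than the one $\mathbb{M}$ actually provides. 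The fix is not hard---amalgamate $D_{1}$, $D_{2}$, and $\langle BC\rangle$ simultaneously rather than $D_{1}$ and $D_{2}$ over $P$---but it is exactly the point where your sketch as written breaks.
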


\begin{proof}
Without loss of generality, we may assume that $M \subseteq B,C$, and that $B$ and $C$ are definably closed.  Write $a = (d_{0},\ldots, d_{k-1},e_{0},\ldots, e_{l-1}, f_{0},\ldots, f_{m-1})$ with $d_{i} \in F$,  $e_{j} \in O_{0}$, $f_{k} \in O_{1}$, and likewise 
\noindent $a' = (d'_{0}, \ldots, d'_{k-1}, e'_{0}, \ldots, e'_{l-1}, f'_{0}, \ldots, f'_{m-1})$.  
Fix an automorphism $\sigma \in \text{Aut}(\mathbb{M}/M)$ with $\sigma(a) = a'$.  Let $U = \{u_{g} : g \in \text{dcl}(aB) \setminus B\}$ and $V = \{v_{g} : g \in \text{dcl}(a'C) \setminus C\}$ denote collection of new formal elements with $u_{g} = v_{\sigma(g)}$ for all $g \in \langle aM \rangle \setminus B$.  Let, then, $a_{*}$ be defined as follows:  
\begin{eqnarray*}
a_{*} &=& (u_{d_{0}}, \ldots, u_{d_{k-1}}, u_{e_{0}}, \ldots, u_{e_{l-1}}, u_{f_{0}},\ldots, u_{f_{m-1}}) \\
&=& (v_{d'_{0}}, \ldots, v_{d'_{k-1}}, v_{e'_{0}}, \ldots, v_{e'_{l-1}}, v_{f_{0}'},\ldots, v_{f_{m-1}'}).
\end{eqnarray*}
We will construct by hand an $L$-structure $D$ extending $\langle BC \rangle$ with domain $UV\langle BC \rangle$ in which $a_{*} \equiv_{B} a$, $a_{*} \equiv_{C} a'$ and $a_{*} \ind^{*}_{M} BC$. 

There is a bijection $\iota_{0}: \text{dcl}(aB) \to BU$ given by $\iota_{0}(b) = b$ for all $b \in B$ and $\iota_{0}(g) = u_{g}$ for all $g \in \text{dcl}(aB) \setminus B$.  Likewise, we have a bijection $\iota_{1}: \text{dcl}(a'C) \to CV$ given by $\iota_{1}(c) = c$ for all $c \in C$ and $\iota_{1}(g) = v_{g}$ for all $g \in \text{dcl}(a'C) \setminus C$.  The union of the images of these functions is the domain of the structure $D$ to be constructed and their intersection is $\iota_{0}(\langle aM \rangle) = \iota_{1}(\langle a'M \rangle)$.  Consider $BU$ and $CV$ as $L$-structures by pushing forward the structure on $\text{dcl}(aB)$ and $\text{dcl}(a'C)$ along $\iota_{0}$ and $\iota_{1}$, respectively.  Note that $\iota_{0}|_{\langle aM \rangle} = (\iota_{1} \circ \sigma)|_{\langle aM \rangle}$.    

We are left to show that we can define an $L$-structure on $UV\langle BC \rangle$ extending that of $BU$, $CV$, and $\langle BC \rangle$ in such a way as to obtain a model of $T$.  To begin, interpret the predicates by $O_{i}^{D} = O_{i}^{BU} \cup O_{i}^{CV} \cup O_{i}^{\langle BC \rangle}$ for $i = 0,1$, $O^{D} = O_{0}^{D} \cup O_{1}^{D}$, $F^{D} = F^{BU} \cup F^{CV} \cup F^{\langle BC \rangle}$, and $R_{0}^{D} = R_{0}^{BU} \cup R_{0}^{CV} \cup R_{0}^{\langle BC \rangle}$.  Let $E^{D}$ be defined to be the equivalence relation generated by $E^{BU}$, $E^{CV}$, and $E^{\langle BC \rangle}$.  The interpretation of the predicates defines extensions of the given structures since if $g$ is an element of $\iota_{0}(\langle aM \rangle) = \iota_{1}(\langle a'M \rangle)$ then $\iota_{0}^{-1}(g)$ is in the predicate $O$ if and only if $\iota_{1}^{-1}(g)$ is as well, and, moreover, it is easy to check that our assumptions on $a,a',B,C$ entail that no pair of inequivalent elements in $BU$, $CV$, or $\langle BC \rangle$ become equivalent in $D$.  
%We claim that no inequivalent elements in $E^{BU}$ become equivalent in $E^{D}$.  Note that every equivalence class in $E^{BU}$ is represented by an element of $B$ or an element of $a_{*}$.  As $a \ind^{*}_{M} B$ and $a' \ind^{*}_{M} C$, if an element $u_{e_{j}}$ of $a_{*}$ is equivalent to an element of $B$ in $E^{BU}$ or to an element of $C$ in $E^{CV}$, then there must be an element $m \in M$ so that $\models E(e_{j},m)$ and $\models E(e'_{j},m)$ since $a \equiv_{M} a'$.  It follows if an element is equivalent to an element of $a_{*}$ in $E^{D}$, then it was already equivalent to it in either $E^{BU}$ or $E^{CV}$.  Finally, if an element $b \in B$ is equivalent in $E^{D}$ to another element $b'$ but not equivalent in $E^{BU}$, then, by definition of transitive closure, $b$ must be equivalent in $E^{CV}$ to an element of $CV$.  By the remarks above, this can only happen if $b$ is equivalent to an element of $C$, which in turn can only happen if $b$ is equivalent to an element of $M$, a contradiction.  Repeating the argument in the other cases show that no two elements which are inequivalent in $E^{BU}$, $E^{CV}$, or $E^{\langle BC \rangle^{\mathbb{M}}}$ become equivalent in $E^{D}$.      

Next we define the function $\text{eval}^{D}$ extending $\text{eval}^{BU} \cup \text{eval}^{CV} \cup \text{eval}^{\langle BC \rangle}$. We first claim that $\text{eval}^{BU} \cup \text{eval}^{CV} \cup \text{eval}^{\langle BC \rangle}$ is a function.  The intersection of the domains of the first two functions is $\iota_{0}(\langle aM \rangle) = \iota_{1}(\langle aM \rangle)$.  If $b,b'$ are in this intersection, we must show
$$
\text{eval}^{BU}(b,b') = c \iff \text{eval}^{CV}(b,b') = c.
$$
Choose $b_{0},b'_{0}, c_{0} \in \langle a M \rangle$ and $b_{1},b'_{1},c_{1} \in \langle a'M \rangle$ with $\iota_{i}(b_{i},b'_{i},c_{i}) = (b,b',c)$ for $i = 0,1$.  Then since $\iota_{0} = \iota_{1} \circ \sigma$ on $\langle aM \rangle$, we have 
\begin{eqnarray*}
\mathbb{M} \models \text{eval}(b_{0},b'_{0}) = c_{0} &\iff& \mathbb{M} \models \text{eval}(\sigma(b_{0}),\sigma(b'_{0})) = \sigma(c_{0}) \\
&\iff& \mathbb{M} \models \text{eval}(b_{1},b'_{1}) = c_{1}.
\end{eqnarray*}
Since $\text{eval}^{BU}$ and $\text{eval}^{CV}$ are defined by pushing forward the structure on $\langle aB\rangle$ and $\langle a'C \rangle$ along $\iota_{0}$ and $\iota_{1}$, respectively, this shows that $\text{eval}^{BU} \cup \text{eval}^{CV}$ defines a function.  Now the intersection of $\langle BC \rangle$ with $BU \cup CV$ is $BC$ and, by construction, all 3 functions agree on this set.  So the union defines a function.

Note that because $BU$, $CV$, and $\langle BC \rangle$ all contain a model $M$ and therefore have non-empty $F$-sort, every $E^{D}$ class is represented by an element of $O_{0}^{D}$.  Choose a complete set of $E^{D}$-class representatives $\{d_{i}: i < \alpha\}$ so that if $d_{i}$ represents an $E^{D}$-class that meets $M$ then $d_{i} \in M$ and $d_{i} \in O_{0}$.  If $e \in O^{D}$ is $E^{D}$-equivalent to some $e'$ and $(f,e')$ is in the domain of $\text{eval}^{BU} \cup \text{eval}^{CV} \cup \text{eval}^{\langle BC \rangle}$, define $\text{eval}^{D}(f,e)$ to be the value that this function takes on $(f,e')$.  On the other hand, if $f \in F^{D} \setminus (F^{BU} \cup F^{CV} \cup F^{\langle BC \rangle})$ or $e$ is not $E^{D}$-equivalent to any element on which $\text{eval}^{D}(f,-)$ has already been defined, put $\text{eval}^{D}(f,e) = d_{i}$ for the unique $d_{i}$ which is $E^{D}$-equivalent to $e$.  This now defines $\text{eval}^{D}$ on all of $F^{D} \times O^{D}$ and, by construction, $\text{eval}^{D}(f,-)$ is a selector function for $E^{D}$ for all $f \in F^{D}$.  %This completes the construction of $D$ and we have shown $D$ is a model of $T$.  By model-completeness and saturation, $D$ embeds into $\mathbb{M}$ over $BC$.  If we can show $a_{*} \ind^{*}_{M} BC$ in $D$, then this will be true for the image of $a_{*}$ in $D$.

To conclude, we must interpret $R_{1}$ on $D$.  In order to build a structure that satisfies axiom (4), we are forced to interpret 
$$
R^{D}_{1} = \{(f,b) \in F \times O_{1} : (\text{eval}^{D}(f,b),b) \in R_{0}^{D}\}.
$$
In order to ensure that $D$ is an extension of $BU$, $CV$, and $\langle BC \rangle$, we have show that for all $X \in \{BU, CV, \langle BC \rangle\}$, $R_{1}^{D} \upharpoonright X = R_{1}^{X}$.  Suppose we have $f,a,b \in X$ with $\text{eval}^{X}(f,b) = a$.  Then because $X$ is a model of $T$, we have $R_{0}^{X}(a,b) \iff R_{1}^{X}(f,b)$ and, by construction, $R_{0}^{X}(a,b) \iff R_{0}^{D}(a,b)$.  By definition, $R_{1}^{D}(f,b) \iff R_{0}^{D}(a,b)$.  This shows $R_{1}^{D}(f,b) \iff R_{1}^{X}(f,b)$, hence $R_{1}^{D} \upharpoonright X = R^{X}_{1}$.  

We have already argued that $BU$ and $CV$ are substructures of $D$ - it follows that every $E^{D}$-class represented by an element of $a_{*}$ can only be equivalent to an element of $B$ or $C$ if it is equivalent to an element of $M$.  Moreover, our construction has guaranteed that $\langle a_{*}M \rangle^{D} \cap \langle BC \rangle \subseteq BU \cap \langle BC \rangle^{D} \subseteq B$ and, by similar reasoning, $\langle a_{*}M \rangle \cap \langle BC \rangle \subseteq C$.  This implies $\langle a_{*} M \rangle^{D} \cap \langle BC \rangle B \cap C \subseteq M$, so $a_{*} \ind^{*}_{M} BC$.  Embedding $D$ into $\mathbb{M}$ over $\langle BC \rangle$, we conclude. 
\end{proof}

\begin{cor}
The theory $T^{*}$ is NSOP$_{1}$.
\end{cor}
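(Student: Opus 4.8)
The plan is to apply the NSOP$_1$ criterion of Fact~\ref{criterion} to the relation $\ind^*$, identifying tuples from $\mathbb{M}$ with the (small) sets of their coordinates. The most substantial of the five conditions, the independence theorem over models, has already been verified in Lemma~\ref{independencetheoremfor2functions}. So the proof amounts to checking that $\ind^*$ is $\mathrm{Aut}(\mathbb{M})$-invariant and satisfies strong finite character, existence over models, monotonicity, and symmetry.

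Invariance, existence over models, monotonicity, and symmetry are all immediate from the definition of $\ind^*$. Invariance holds because $\ind^*$ is defined purely in terms of definable closure and the relation $E$, both preserved by automorphisms. For existence over models, take $C = b = M$: conditions (1) and (2) then read $\mathrm{dcl}(aM)/E \cap \mathrm{dcl}(M)/E \subseteq \mathrm{dcl}(M)/E$ and $\mathrm{dcl}(aM) \cap \mathrm{dcl}(M) \subseteq \mathrm{dcl}(M)$, which are trivially true. For monotonicity, passing from $aa'$ to $a$ and from $bb'$ to $b$ only shrinks $\mathrm{dcl}(aC)$, $\mathrm{dcl}(aC)/E$, $\mathrm{dcl}(bC)$, and $\mathrm{dcl}(bC)/E$, which only makes the required containments in $\mathrm{dcl}(C)$ and $\mathrm{dcl}(C)/E$ easier to satisfy. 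Symmetry is visible in the definition.

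For strong finite character, suppose $a \nind^*_M b$, so one of the two defining conditions fails. If (2) fails, fix $g \in \mathrm{dcl}(aM) \cap \mathrm{dcl}(bM)$ with $g \notin \mathrm{dcl}(M) = M$ (using that $M$ is a model, hence definably closed), and write $g = \tau(a,\bar m) = \rho(b,\bar m')$ with $\tau, \rho$ terms and $\bar m, \bar m'$ finite tuples from $M$. Then the formula $\varphi(x;b,\bar m,\bar m') := (\tau(x,\bar m) = \rho(b,\bar m'))$ lies in $\mathrm{tp}(a/Mb)$, and for any $a' \models \varphi$ the element $\tau(a',\bar m) = \rho(b,\bar m') = g$ lies in $\mathrm{dcl}(a'M) \cap \mathrm{dcl}(bM)$; since the witness $g$ is a fixed element not in $M = \mathrm{dcl}(M)$, this gives $a' \nind^*_M b$. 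If instead (1) fails, run the same argument with $g_1 \in \mathrm{dcl}(aM)$ and $g_2 \in \mathrm{dcl}(bM)$ in a common $E$-class meeting no element of $\mathrm{dcl}(M)$, using the formula $E(\tau(x,\bar m),\rho(b,\bar m'))$; the relevant $E$-class on the $b$-side is again fixed, so every realization of the formula witnesses $a' \nind^*_M b$.

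Having verified all five hypotheses of Fact~\ref{criterion}, we conclude that $T^*$ is NSOP$_1$. I do not expect a serious obstacle here: strong finite character is the only condition that is not a one-line unwinding of the definition, and even there the point is simply that the non-independence witness sitting on the $b$-side is an honest element (or $E$-class) that persists when $a$ is replaced by any realization of the chosen formula. The one genuinely delicate construction, the amalgam realizing the independence theorem, has already been carried out in Lemma~\ref{independencetheoremfor2functions}.
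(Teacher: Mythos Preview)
Your proposal is correct and follows exactly the paper's approach: apply Fact~\ref{criterion} to $\ind^{*}$, citing Lemma~\ref{independencetheoremfor2functions} for the independence theorem and checking the remaining axioms directly. The paper dispatches (1)--(4) with the phrase ``easily seen,'' whereas you spell out the verifications (including the term-based witness for strong finite character); this is a harmless elaboration, not a different route.
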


\begin{proof}
The relation $\ind^{*}$ is easily seen to satisfy properties (1) through (4) from Fact \ref{criterion} and the independence theorem is established in Lemma \ref{independencetheoremfor2functions}.  This implies $T^{*}$ is NSOP$_{1}$.  %Now suppose $M \models T^{*}$ and $A \nind^{*}_{M} B$, where $A = \text{dcl}(AM)$ and $B = \text{dcl}(BM)$.  Then there is some $b \in B \setminus M$ so that either $b$ is equal to an element of $A \setminus M$ or $E$-equivalent to an element of $A$ which is not equivalent to any element of $M$.  Any $\ind^{*}$-Morley sequence $\langle b_{i} : i < \omega \rangle$ in $\text{tp}(b/M)$ will necessarily consist of distinct elements hence $\{x = b_{i} : i< \omega\}$ is 2-inconsistent, and if $b$ is not equivalent to any element of $M$ then $\langle b_{i} : i < \omega \rangle$ are pairwise $E$-inequivalent and thus $\{E(x;b_{i}) : i < \omega\}$ is $2$-inconsistent.  This shows $\ind^{*}$ satisfies the witnessing axiom (6) and therefore $\ind^{*} = \ind^{K}$ over models.  
\end{proof}

\begin{rem}
One may additionally show that $\ind^{*} = \ind^{K}$ over models.  As we won't need Kim-independence in what follows, we omit the proof.  
\end{rem}

\begin{prop}
$\kappa_{\mathrm{shred}}(T^{*}) = \infty$.
\end{prop}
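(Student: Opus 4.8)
The plan is to prove $\kappa^{1}_{\mathrm{shred}}(T^{*}) = \infty$, which by Theorem~\ref{one var thm} gives $\kappa_{\mathrm{shred}}(T^{*}) = \infty$. So I fix an arbitrary infinite regular cardinal $\kappa$ and aim to produce an increasing continuous sequence of sets $\langle A_{i} : i \leq \kappa \rangle$ with $A_{\kappa} = \bigcup_{i < \kappa} A_{i}$ and a type $p \in S^{1}(A_{\kappa})$ with $p$ shredding over $A_{i}$ for all $i < \kappa$. Proposition~\ref{the main equivalence prop} then upgrades this to an increasing continuous chain of models witnessing that $\kappa$ cannot witness $\kappa^{1}_{\mathrm{shred}}(T^{*})$; since $\kappa$ is arbitrary, this gives $\kappa^{1}_{\mathrm{shred}}(T^{*}) = \infty$.

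First I would write down an explicit $G \models T$: put elements $b_{i,\alpha}$ (for $i < \kappa$, $\alpha < \omega$) into $O_{1}$ and elements $e_{i}$ (for $i<\kappa$) into $O_{0}$, with $F^{G} = \emptyset$; declare the $E$-classes of $G$ to be exactly $K_{i} = \{b_{i,\alpha} : \alpha < \omega\} \cup \{e_{i}\}$ for $i<\kappa$; set $R_{0}^{G} = \{(e_{i},b_{i,\alpha}) : i < \kappa,\ \alpha \text{ even}\}$; and let $R_{1}^{G}$ and $\mathrm{eval}^{G}$ be empty, which is forced by $F^{G} = \emptyset$. Checking axioms (1)--(4) for $G$ is routine (all the nonvacuous content is the partition and $R_{0}\subseteq O_{0}\times O_{1}$), so $G \models T$. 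Since $T^{*}$ is the model completion of $T$, I may embed $G$ into $\mathbb{M}$ and identify $G$ with its image; by quantifier elimination in $T^{*}$, the full $\mathbb{M}$-type of any tuple from $G$ over any subset of $G$ is computed by the quantifier-free type inside $G$.

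Next I would set $A_{0} = \emptyset$, $A_{i+1} = A_{i} \cup \{b_{i,\alpha} : \alpha < \omega\} \cup \{e_{i}\}$, $A_{\delta} = \bigcup_{i<\delta} A_{i}$ at limits, so $A_{\kappa} = G$, and let $\varphi_{i}(x;e_{i})$ be the formula $x \in F \wedge \mathrm{eval}(x,e_{i}) = e_{i}$. I take $p$ to be any completion in $S^{1}(A_{\kappa})$ of $\{\varphi_{i}(x;e_{i}) : i < \kappa\}$; this partial type is consistent because a finite subset merely requires $x \in F$ to select $e_{i_{1}},\dots,e_{i_{n}}$ out of the \emph{distinct} classes $K_{i_{1}},\dots,K_{i_{n}}$, which is realized by a free $F$-point in a suitable finite extension inside the Fra\"iss\'e class $\mathbb{K}$ (using Lemma~\ref{NSOP1 Fraisse class}), and then compactness applies. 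For the shredding I would exhibit $\overline{b}_{i} = \langle b_{i,\alpha} : \alpha < \omega \rangle$ as a witness that $\varphi_{i}(x;e_{i})$ explicitly $\aleph_{0}$-shreds over $A_{i}$: the substructure of $G$ generated by $A_{i}$ together with finitely many $b_{i,\alpha}$'s has empty $F$-part, hence records only that those $b_{i,\alpha}$ are distinct elements of $O_{1}$, mutually $E$-equivalent, $E$-inequivalent to all of $A_{i}$, and with no $R_{0}$- or $R_{1}$-edge to $A_{i}$ (here it is essential that $e_{i}\notin A_{i}$), so all increasing tuples from $\overline{b}_{i}$ have the same quantifier-free type over $A_{i}$ and $\overline{b}_{i}$ is $A_{i}$-indiscernible. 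On the other hand, if $\mathbb{M} \models \varphi_{i}(c;e_{i})$ then $\mathrm{eval}(c,b_{i,\alpha}) = \mathrm{eval}(c,e_{i}) = e_{i}$ since $E(b_{i,\alpha},e_{i})$ holds, so axiom (4c) gives $R_{1}(c,b_{i,\alpha}) \leftrightarrow R_{0}(e_{i},b_{i,\alpha})$, which holds exactly when $\alpha$ is even. Hence $\varphi_{i}(x;e_{i}) \vdash R_{1}(x,b_{i,\delta}) \wedge \neg R_{1}(x,b_{i,\delta+1})$ for every even $\delta$, which — taking $n=1$, $k=1$, $\psi_{0}(x;y_{0}) = R_{1}(x,y_{0})$, and $\eta_{0}(0)=1$ — is precisely the data of explicit $\aleph_{0}$-shredding of $\varphi_i$ over $A_{i}$; by Lemma~\ref{shredding equivalents}, $\varphi_{i}$ shreds over $A_{i}$, so $p$ does too, for every $i < \kappa$.

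The conclusion then follows: Proposition~\ref{the main equivalence prop} applied to $\langle A_{i} : i \leq \kappa \rangle$ and $p$ produces a model chain contradicting $\kappa^{1}_{\mathrm{shred}}(T^{*}) \leq \kappa$, and as $\kappa$ ranges over all regular cardinals we get $\kappa^{1}_{\mathrm{shred}}(T^{*}) = \infty$, hence $\kappa_{\mathrm{shred}}(T^{*}) = \infty$ by Theorem~\ref{one var thm}. I expect the main work to be purely in the verifications — that $G \models T$, that the generated substructures over $A_{i}$ really are quantifier-free indiscernible so that $e_{i}$ may safely be kept out of $A_{i}$, and that the finite amalgamations needed for consistency of $p$ exist in $\mathbb{K}$. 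Conceptually, the new ingredient over the unshreddable example $T^{*}_{1}$ is axiom (4c): it lets the parity pattern carried by $R_{0}$ on $\overline{b}_{i}$ (with respect to $e_{i}$) be read off by the parameter-free formula $R_{1}(x,-)$, so the type $p$ need only remember one harmless fact ($\mathrm{eval}(x,e_{i})=e_{i}$) per level, and these remain jointly consistent precisely because the classes $K_{i}$ are pairwise disjoint.
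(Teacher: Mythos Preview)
Your proof is correct and follows essentially the same approach as the paper: both build elements $e_{i}\in O_{0}$ (the paper calls them $a_{i}$) and sequences $\overline{b}_{i}\subseteq O_{1}$ in the $E$-class of $e_{i}$ with $R_{0}(e_{i},b_{i,\alpha})$ holding exactly for even $\alpha$, then use axiom (4c) to show that $\mathrm{eval}(x,e_{i})=e_{i}$ forces $R_{1}(x,b_{i,\alpha})$ to alternate with parity, witnessing explicit shredding. Your presentation is a bit more explicit---you construct the configuration as a concrete model $G$ of $T$ with empty $F$-part and embed it, and you spell out the appeals to Lemma~\ref{shredding equivalents}, Proposition~\ref{the main equivalence prop}, and Theorem~\ref{one var thm}---but the underlying argument is the same.
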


\begin{proof}
Let $\kappa$ be an arbitrary regular cardinal.  Inductively, we may choose a sequence of elements $\langle a_{i} : i < \kappa\rangle$ and a sequence of sequences $\langle \overline{b}_{i} : i < \kappa \rangle$ so that 
\begin{enumerate}
\item For all $i < \kappa$, $a_{i} \in O_{0}$.  
\item For all $i < \kappa$, $\overline{b}_{i} = \langle b_{i,j} : j < \omega  \rangle$ is an $a_{<i}\overline{b}_{<i}$-indiscernible sequence of elements of $O_{1}$ in the same $E$-class as $a_{i}$, with $R_{0}(a_{i},b_{i,j})$ if and only if $j$ is even. 
\end{enumerate}

Let $p(x) = \{\mathrm{eval}(x;a_{i}) = a_{i} : i < \kappa\}$ and fix some $i < \kappa$.  Because each $b_{i,j}$ is $E$-equivalent to $a_{i}$ and $\mathrm{eval}(x,-)$ is a selector function, $\mathrm{eval}(x,a_{i}) = a_{i}$ implies $\mathrm{eval}(x;b_{i,j}) = a_{i}$.  It follows from axiom 4(c) of $T$ that $\mathrm{eval}(x,a_{i}) = a_{i}$ implies $R_{0}(a_{i},b_{i,j}) \leftrightarrow R_{1}(x,b_{i,j})$ for all $j$.  Therefore, $\mathrm{eval}(x,a_{i}) = a_{i} \vdash R_{1}(x;b_{i,j})$ if $j$ is even and $\mathrm{eval}(x,a_{i}) = a_{i} \vdash \neg R_{1}(x;b_{i,j})$ if $j$ is odd.  This shows $\mathrm{eval}(x;a_{i}) = a_{i} \in p\upharpoonright a_{<i+1}$ explicitly shreds over $a_{<i}$.  Since $\kappa$ is arbitrary, we conclude $\kappa_{\mathrm{shred}}(T^{*}) = \infty$.  
\end{proof}

\subsection{An NTP$_{2}$ example}

In this subsection, we describe an NTP$_{2}$ example with $\kappa_{\mathrm{shred}}(T) = \infty$.  Recall the definition of NTP$_{2}$ theories:
\begin{defn}
A formula $\varphi(x;y)$ has \emph{the tree property of the second kind} (TP$_{2}$) if there is an array of tuples $(a_{i,j})_{i,j < \omega}$ and $k < \omega$ satisfying the following:
\begin{enumerate}
\item For all $f : \omega \to \omega$, $\{\varphi(x;a_{i,f(i)}) : i < \omega\}$ is consistent.
\item For all $i < \omega$, $\{\varphi(x;a_{i,j}) : j < \omega\}$ is $k$-inconsistent.  
\end{enumerate}
A theory is said to have TP$_{2}$ if some formula has TP$_{2}$ modulo $T$ and is otherwise called NTP$_{2}$.  
\end{defn}
The class of NTP$_{2}$ contains both the NIP and simple theories, so it is natural to ask if NTP$_{2}$ implies $\kappa_{\mathrm{shred}}(T) < \infty$ but we show this is not the case.  

The following fact will be useful in checking that the theory we construct is NTP$_{2}$:
\begin{fact} \label{NTP2 facts}
\begin{enumerate}
\item If $T$ has TP$_{2}$, there is a formula $\varphi(x;y)$ witnessing this with $l(x) = 1$ \cite[Corollary 2.9]{ChernikovNTP2}.
\item If $\varphi(x;y)$ has TP$_{2}$, then this will be witnessed with respect to an array of parameters $(a_{i,j})_{i,j < \omega}$ that is \emph{mutually indiscernible}\textemdash that is, $\overline{a}_{i}$ is $\overline{a}_{\neq i}$-indiscernible for all $i < \omega$ \cite[Lemma 2.2]{ChernikovNTP2}.  
\end{enumerate}
\end{fact}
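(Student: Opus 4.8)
The plan is to take Fact~\ref{NTP2 facts} from \cite{ChernikovNTP2}, exactly as the author does; both items are standard facts about the combinatorics of TP$_{2}$-arrays, so I will only indicate the arguments and point to where the real work sits.

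For part (2): suppose $\varphi(x;y)$ has TP$_{2}$ via an array $(a_{i,j})_{i,j<\omega}$ whose rows are $k$-inconsistent. I would first view the array as the sequence $\langle\overline{a}_{i}:i<\omega\rangle$ of its rows, $\overline{a}_{i}=\langle a_{i,j}:j<\omega\rangle$, and extract, by Ramsey and compactness, a sequence $\langle\overline{a}'_{i}:i<\omega\rangle$ of $\omega$-tuples that is indiscernible as a sequence and realizes the EM-type of $\langle\overline{a}_{i}:i<\omega\rangle$ over $\emptyset$, retaining the internal order on each row. Both TP$_{2}$-requirements survive: $k$-inconsistency of each row is part of the EM-type, being expressed by the formulas $\neg\exists x\bigwedge_{l<k}\varphi(x;a_{i,j_{l}})$ over increasing tuples $j_{0}<\cdots<j_{k-1}$; and consistency of every vertical path $\{\varphi(x;a_{i,\sigma(i)}):i<\omega\}$ is a closed condition, so it is preserved by compactness once the new array has the same EM-type and in particular remains indiscernible. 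A second, standard iterated-extraction argument then upgrades the array to a mutually indiscernible one, again without disturbing these two closed conditions; this is \cite[Lemma~2.2]{ChernikovNTP2}.

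For part (1): this is the genuine content, and I would invoke the one-variable theorem \cite[Corollary~2.9]{ChernikovNTP2}. Its proof runs through the theory of \emph{burden} and \emph{inp-patterns}: a TP$_{2}$-pattern in a tuple $\overline{x}=(x_{0},\dots,x_{n-1})$ immediately yields inp-patterns of every finite depth in $\overline{x}$ (restrict the array to finitely many rows), hence an inp-pattern of infinite depth; a sub-additivity (Fubini-type) estimate for burden then forces some coordinate $x_{i}$ to carry an inp-pattern of infinite depth, from which one extracts a single formula $\psi(x_{i};z)$ with $l(x_{i})=1$ having TP$_{2}$. The main obstacle --- the only step that is not extraction bookkeeping --- is exactly this burden estimate, i.e. controlling what happens when inp-patterns living on distinct coordinates are merged; this is where mutual indiscernibility from part (2), and the NTP$_{2}$ hypothesis, are used. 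Since both statements are available off the shelf from \cite{ChernikovNTP2}, for the purposes of this section I would, as the author does, record them as a Fact and pass on.
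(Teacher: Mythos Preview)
Your proposal is correct and matches the paper's treatment exactly: the statement is recorded as a \emph{Fact} with no proof, simply citing \cite[Corollary 2.9]{ChernikovNTP2} and \cite[Lemma 2.2]{ChernikovNTP2}. Your sketches of how those arguments run are accurate but go beyond what the paper itself provides.
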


Let $L$ a language consisting of two binary relations $R, \unlhd$, and a binary function $\wedge$ and the sublanguage consisting of just $\unlhd$ and $\wedge$ is $L_{\mathrm{tr}}$.  The class $\mathbb{K}$ will consist of finite $L$-structures $(A,\unlhd^{A}, \wedge^{A}, R^{A})$ so that $(A,\unlhd^{A},\wedge^{A})$ is a meet-tree where $\wedge^{A}$ is the meet function, and $R^{A}$ is a graph on $A$.  Denote the class of finite $\wedge$-trees $(A, \unlhd^{A}, \wedge^{A})$ by $\mathbb{K}_{0}$.  This is a Fra\"iss\'e class with the strong amalgamation property (SAP) and the theory $T_{\mathrm{tr}}$ of its Fra\"iss\'e limit is dp-minimal \cite[Exercise 2.50, Example 4.28]{simon2015guide}, which means given a mutually indiscernible array $(a_{i,j})_{i < 2, j < \omega}$ and element $c$, there is some $i  < 2$ such that $\overline{a}_{i}$ is $c$-indiscernible.   
%
%Suppose $\mathbb{F}$ is a Fra\"iss\'e class in the language $L$ with the strong amalgamation property and let $L_{R}$ be the language $L$ together with a new binary relation $R$.  Let $C$ denote the $L$-substructure generated by $\emptyset$ some $L$-structure in $\mathbb{F}$ (by JEP, this is isomorphic to the $L$-substructure generated by $\emptyset$ in every structure in $\mathbb{F}$).  Suppose $\tilde{C}$ is an $L_{R}$-structure expanding $C$ such that $R^{\tilde{C}}$ is a graph on $C$ (i.e. a symmetric irreflexive relation) and let $\mathbb{F}_{R}(\tilde{C})$ denote all $L_{R}$-structures $\tilde{A}$ so that $\tilde{A} \upharpoonright L \in \mathbb{F}$, $R^{\tilde{A}}$ is symmetric and irreflexive, and $\tilde{C}$ embeds into $A$.  If $C = \emptyset$, we will just write $\mathbb{F}_{R}$.  

\begin{lem} \label{its a fraisse class}
The class $\mathbb{K}$ is a Fra\"iss\'e class.  Moreover, the reduct of the Fra\"iss\'e limit of $\mathbb{K}$ to $L_{\mathrm{tr}}$ is the Fra\"iss\'e limit of $\mathbb{K}_{0}$.  
\end{lem}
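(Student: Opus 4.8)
The plan is to verify the Fra\"iss\'e axioms for $\mathbb{K}$ by reducing each one to the corresponding fact about $\mathbb{K}_{0}$ (which is already known to be a Fra\"iss\'e class with strong amalgamation), and then to identify the $L_{\mathrm{tr}}$-reduct of the Fra\"iss\'e limit of $\mathbb{K}$ by checking that it has age $\mathbb{K}_{0}$ and satisfies the extension (richness) property for $\mathbb{K}_{0}$.

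First I would dispatch the easy axioms. The class $\mathbb{K}$ is closed under isomorphism and has only countably many isomorphism types, since there are only finitely many $L$-structures on each finite set. The hereditary property is immediate: a subset of a meet-tree closed under $\wedge$ is again a meet-tree, and the restriction of a graph to a subset is a graph, so an $L$-substructure of a member of $\mathbb{K}$ is again in $\mathbb{K}$. For amalgamation, I would start from $A \subseteq B, C$ in $\mathbb{K}$ with $B \cap C = A$, pass to the $L_{\mathrm{tr}}$-reducts, and use strong amalgamation in $\mathbb{K}_{0}$ to obtain a meet-tree $D_{0}$ with underlying set exactly $B \cup C$ extending both reducts. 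On $D_{0}$ I would then define $R^{D} := R^{B} \cup R^{C}$; since $B \cap C = A$ and $R^{B}, R^{C}$ both restrict to $R^{A}$ on $A$, this is a symmetric irreflexive relation with $R^{D} \cap B^{2} = R^{B}$ and $R^{D} \cap C^{2} = R^{C}$, so the resulting $L$-structure $D$ lies in $\mathbb{K}$ and extends both $B$ and $C$, witnessing AP. The joint embedding property is proved the same way (or is simply the case $A = \emptyset$). This shows $\mathbb{K}$ is a Fra\"iss\'e class; write $M$ for its limit.

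For the moreover clause I would invoke the standard criterion: a countable $L_{\mathrm{tr}}$-structure $N$ is (isomorphic to) the Fra\"iss\'e limit of $\mathbb{K}_{0}$ as soon as $\mathrm{Age}(N) = \mathbb{K}_{0}$ and $N$ has the extension property for $\mathbb{K}_{0}$. Take $N := M \upharpoonright L_{\mathrm{tr}}$. The key bookkeeping point is that for a finite $S \subseteq M$ the $L_{\mathrm{tr}}$-substructure of $N$ generated by $S$ equals the $L_{\mathrm{tr}}$-reduct of the $L$-substructure of $M$ generated by $S$, because both are just the closure of $S$ under $\wedge$. Hence every finitely generated substructure of $N$ is the reduct of a member of $\mathbb{K}$, so a meet-tree, so in $\mathbb{K}_{0}$; conversely any $A_{0} \in \mathbb{K}_{0}$ expands to $A \in \mathbb{K}$ by setting $R^{A} = \emptyset$, and an embedding $A \hookrightarrow M$ restricts to $A_{0} \hookrightarrow N$. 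This gives $\mathrm{Age}(N) = \mathbb{K}_{0}$.

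For the extension property: given $A_{0} \subseteq B_{0}$ in $\mathbb{K}_{0}$ and an embedding $f : A_{0} \to N$, I would set $A := f(A_{0})$ with induced $L$-structure $R^{A} := R^{M} \cap A^{2}$, so $A \in \mathbb{K}$ and $f$ becomes an isomorphism onto $A \upharpoonright L_{\mathrm{tr}}$; then expand $B_{0}$ (identified over $A_{0}$ with an $L_{\mathrm{tr}}$-superstructure of $A \upharpoonright L_{\mathrm{tr}}$) to a structure $B \in \mathbb{K}$ with $R^{B} := R^{A}$ and no $R$-edges outside $A$, apply the extension property of the Fra\"iss\'e limit $M$ to extend $A \hookrightarrow M$ to an embedding $B \hookrightarrow M$, and take the $L_{\mathrm{tr}}$-reduct. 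Composing with the isomorphism $B_{0} \to B \upharpoonright L_{\mathrm{tr}}$ yields the required extension of $f$, so $N$ is the Fra\"iss\'e limit of $\mathbb{K}_{0}$. I do not expect a genuine obstacle; the only two points needing care are the reduct bookkeeping just described and the essential use of \emph{strong} (rather than plain) amalgamation in $\mathbb{K}_{0}$, which is exactly what lets the amalgam have underlying set $B \cup C$ so that the two graphs can be superimposed without conflict.
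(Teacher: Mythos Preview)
Your proof is correct and follows essentially the same approach as the paper: reduce SAP to strong amalgamation in $\mathbb{K}_{0}$ and superimpose the two graph relations on the amalgam, then for the moreover clause lift $L_{\mathrm{tr}}$-embeddings to $L$-embeddings by pushing forward the $R$-relation. The only difference is packaging: where you directly verify that the reduct has age $\mathbb{K}_{0}$ and the extension property, the paper checks the same expansion-lifting condition and invokes a general lemma (Lemma~2.8 of the second author's thesis) that yields the conclusion; your argument is simply that lemma unwound in this instance.
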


\begin{proof}
HP is clear and JEP will follow from a similar argument to SAP, so we will prove SAP.  Fix $\tilde{A}, \tilde{B}_{0},\tilde{B}_{1} \in \mathbb{K}$ such that $\tilde{A}$ is an $L$-substructure of both $\tilde{B}_{0}$ and $\tilde{B}_{1}$ and $\tilde{B}_{0} \cap \tilde{B}_{1} = \tilde{A}$.  Let $A = \tilde{A} \upharpoonright L_{\mathrm{tr}}$ and $B_{i} = \tilde{B}_{i} \upharpoonright L_{\mathrm{tr}}$ for $i = 0,1$.  By SAP in $\mathbb{K}_{0}$, there is $D \in \mathbb{F}$ extending both $B_{0}$ and $B_{1}$.  We may expand $D$ to an $L$-structure $\tilde{D}$ by setting $R^{\tilde{D}} = R^{\tilde{B}_{0}} \cup R^{\tilde{B}_{1}}$.  This establishes SAP for $\mathbb{K}$.    	

Next, suppose $A, B \in \mathbb{K}_{0}$ and $\pi: A \to B$ is an $L_{tr}$-embedding.  If $\tilde{A} \in \mathbb{K}$ is an expansion of $A$, then we can expand $B$ to the $L$-structure $\tilde{B}$ in which $R^{\tilde{B}} = \{(\pi(a),\pi(a')) : (a,a') \in R^{\tilde{A}}\}$.  Clearly we have $\tilde{B} \in \mathbb{K}$ and $\pi$ is also an $L$-embedding so by \cite[Lemma 2.8]{ramsey2015invariants}, the reduct of the Fra\"iss\'e limit of $\mathbb{K}$ is the Fra\"iss\'e limit of $\mathbb{K}_{0}$.  
\end{proof}

By Lemma \ref{its a fraisse class}, we know that $\mathbb{K}$ has a Fra\"iss\'e limit which is an $\omega$-categorical expansion of $T_{\mathrm{tr}}$ by a (random) graph.  Let $T$ denote its theory and let $\mathbb{M}$ and $\mathbb{M}_{\mathrm{tr}}$ denote the monster models of $T$ and $T_{\mathrm{tr}}$ respectively.  

\begin{lem} \label{sequence stuff}
Suppose we are given an $L$-indiscernible sequence $I = \langle a_{i} : i \in \mathbb{Z} \rangle$ and an element $b$ so that $I$ is $L_{\mathrm{tr}}$-indiscernible over $b$.  Then there is $b' \equiv^{L}_{a_{0}} b$ so that $I$ is $L$-indiscernible over $b'$.
\end{lem}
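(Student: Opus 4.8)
The plan is to use quantifier elimination together with the genericity of $\mathbb{M}$ in order to \emph{re-place} $b$: we produce $b'$ agreeing with $b$ on the (finitely much) $L$-structure visible over $a_{0}$, but sitting at a generic position far from the rest of $I$, so that every $R$-edge from $b'$ to $I$ may be taken constant along $I$ and is therefore compatible with indiscernibility. \textbf{Reductions.} Both $T^{*}$ and its $L_{\mathrm{tr}}$-reduct $T_{\mathrm{tr}}$ eliminate quantifiers, so $\equiv^{L}_{a_{0}}$ means equality of quantifier-free $L$-types over $a_{0}$, and ($L_{\mathrm{tr}}$-)indiscernibility means quantifier-free ($L_{\mathrm{tr}}$-)indiscernibility. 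Moreover the age of $\mathbb{M}$ is exactly $\mathbb{K}$, the class of finite meet-trees carrying a graph; hence any small $L$-structure that is a meet-tree carrying a graph has all of its finite substructures in $\mathbb{K}$, so by richness of the Fra\"iss\'e limit and saturation of $\mathbb{M}$ any $L$-embedding of a substructure of it into $\mathbb{M}$ extends. Let $J$ be the $L_{\mathrm{tr}}$-substructure of $\mathbb{M}$ generated by $I$ (the $\wedge$-closure of $I$). It then suffices to build an $L$-structure $D$ whose domain is $J$ together with finitely many new points, one being $b'$, such that $J$ is a substructure of $D$ carrying its $\mathbb{M}$-induced $L$-structure, $D$ is a meet-tree carrying a graph, $\mathrm{qftp}_{L}(b'/a_{0})$ in $D$ equals $\mathrm{qftp}_{L}(b/a_{0})$ in $\mathbb{M}$, and $I$ is quantifier-free $L$-indiscernible over $b'$ in $D$; then any embedding $D \to \mathbb{M}$ fixing $J$ pointwise supplies the required $b'$.

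\textbf{Construction of $D$.} I would split on the position of $b$ relative to $a_{0}$. If $a_{0} \not\lhd b$ (i.e.\ $b \lhd a_{0}$ or $b \perp a_{0}$): adjoin to $J$ a new point $c'$ strictly below every element of $J$, and, when $b \perp a_{0}$, a second new point $b'$ with $c' \lhd b'$ and $b'$ incomparable to every element of $J$ (so $b' \wedge x = c'$ for all $x \in J$); when $b \lhd a_{0}$ put $b' = c'$. This is still a meet-tree. For the graph, keep $R^{D} \upharpoonright J = R^{\mathbb{M}} \upharpoonright J$; make every $R$-edge from $c'$ or $b'$ into $J$ \emph{constant along $I$}, choosing the finitely many values $R^{D}(a_{i},b')$, $R^{D}(a_{i},c')$, $R^{D}(b',c')$ so as to match $R^{\mathbb{M}}(a_{0},b)$, $R^{\mathbb{M}}(a_{0},a_{0}\wedge b)$, $R^{\mathbb{M}}(b,a_{0}\wedge b)$ respectively (these prescriptions are consistent when $b' = c'$, since then $a_{0}\wedge b = b$), and declaring all remaining $R$-edges out of $c'$ and $b'$ (to meet points of $J$ not in $I$) to be false. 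If instead $a_{0} \lhd b$: here I would invoke the hypothesis. Since $a_{0} \lhd b$ and $I$ is $L_{\mathrm{tr}}$-indiscernible over $b$, each $a_{i}$ has the same $L_{\mathrm{tr}}$-type over $b$ as $a_{0}$, so $a_{i} \lhd b$ for all $i$; in a tree this forces $I$ to be a $\unlhd$-chain and $J = I$. Put $D = I \cup \{b'\}$ with $b'$ a new point above the whole chain and $R^{D}(a_{i},b') := R^{\mathbb{M}}(a_{0},b)$ for all $i$; again $D$ is a meet-tree carrying a graph.

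\textbf{Verification, and the expected obstacle.} Then $b' \equiv^{L}_{a_{0}} b$ is immediate, since the $L_{\mathrm{tr}}$-substructure of $D$ generated by $\{a_{0},b'\}$ is $\{a_{0},c',b'\}$ (or $\{a_{0},b'\}$) and is $L$-isomorphic to the one generated by $\{a_{0},b\}$ in $\mathbb{M}$ via $c' \mapsto a_{0}\wedge b$, $b' \mapsto b$, because the relevant $R$-facts were copied over. And $I$ is quantifier-free $L$-indiscernible over $b'$ because, for $i_{1} < \dots < i_{n}$, the quantifier-free $L$-type over $b'$ of $(a_{i_{1}},\dots,a_{i_{n}})$ lives on $\langle a_{i_{1}},\dots,a_{i_{n}}\rangle_{L_{\mathrm{tr}}} \cup \{c',b'\}$ with every $a_{i_{k}}\wedge b' = c'$ (resp.\ $= a_{i_{k}}$, in the chain case): the tree part depends only on the order type of $(i_{1},\dots,i_{n})$ by the indiscernibility of $I$, the $R$-edges inside $\langle a_{i_{1}},\dots,a_{i_{n}}\rangle_{L_{\mathrm{tr}}}$ likewise by the $L$-indiscernibility of $I$, and every $R$-edge meeting $c'$ or $b'$ is constant by construction. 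The step I expect to be the real obstacle is recognizing that one \emph{cannot} simply keep $b$'s $L_{\mathrm{tr}}$-type over all of $I$ and merely re-randomize graph edges: the meets $a_{i}\wedge b$ may lie inside $\langle I\rangle_{L_{\mathrm{tr}}}$ at positions whose $R$-edges to $I$ are not constant along $I$ relative to $b$, which would obstruct full $L$-indiscernibility over \emph{any} $b'$ having that tree type. It is exactly the weakness of the requirement $b' \equiv^{L}_{a_{0}} b$ (agreement only over $a_{0}$) that lets us slide $b'$ down to a generic low point and bypass this, while $L_{\mathrm{tr}}$-indiscernibility of $I$ over $b$ is what excludes, in the case $a_{0}\lhd b$, the configurations admitting no suitable $b'$. (For tuple $b$, or tuple $a_{i}$, the same strategy goes through with a heavier case analysis.)
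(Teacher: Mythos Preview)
Your argument for singleton $a_{i}$ is correct but follows a genuinely different route from the paper.  The paper does \emph{not} move $b$ in the tree at all: it keeps the $L_{\mathrm{tr}}$-structure $\langle b,(a_{i})_{i}\rangle_{L_{\mathrm{tr}}}$ exactly as it is and only redefines $R$, setting $R^{M}=R^{B}\cup\bigcup_{i\in\mathbb{Z}}\sigma^{i}(R^{A_{0}})$ where $\sigma$ is an $L_{\mathrm{tr}}$-automorphism over $b$ shifting $a_{i}\mapsto a_{i+1}$, $B=\langle I\rangle_{L}$ and $A_{0}=\langle a_{0}b\rangle_{L}$.  Two short coherence claims (using $L$-indiscernibility of $I$ for $R^{B}$ and the shift for the $\sigma^{i}(R^{A_{0}})$'s) give a well-defined $L$-structure extending $B$ in which $a_{0}b\equiv^{L}a_{i}b$ for every $i$; a final appeal to Ramsey and compactness then upgrades this to full $L$-indiscernibility over a $b''\equiv^{L}_{a_{0}}b$.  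Your approach trades the Ramsey step for a direct construction, at the cost of forgetting $b$'s tree position over $I\setminus\{a_{0}\}$.

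Your closing diagnosis of ``the real obstacle'' is therefore off the mark: one \emph{can} keep $b$'s $L_{\mathrm{tr}}$-type over all of $I$ and re-randomize only $R$; the meets $a_{i}\wedge b$ landing inside $\langle I\rangle$ cause no inconsistency precisely because $I$ is already $L$-indiscernible, and the paper's Claims~1--2 check exactly this.  What that strategy fails to deliver immediately is \emph{full} indiscernibility of $I$ over $b'$ (only $a_{0}b'\equiv a_{i}b'$), but Ramsey closes that gap.

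There is a real gap in your write-up, however.  The application to NTP$_{2}$ uses the lemma with $a_{i}$ an $l(y)$-\emph{tuple}, and your case split ``$b\lhd a_{0}$ / $b\perp a_{0}$ / $a_{0}\lhd b$'' only makes sense for singleton $a_{0}$.  For tuple $a_{0}$ the position of $b$ relative to the coordinates of $a_{0}$ and their meets can be genuinely mixed (e.g.\ $a_{0,1}\lhd b\lhd a_{0,2}$), and then no single ``generic low point'' or ``point above the chain'' realizes $\mathrm{tp}^{L_{\mathrm{tr}}}(b/a_{0})$.  Producing a $b'$ with the correct $L$-type over a tuple $a_{0}$ while remaining uniform over all $a_{i}$ is not just a heavier case analysis: it essentially forces you to attach a copy of $\langle a_{0},b\rangle\setminus\langle a_{0}\rangle$ to $J$ coherently, which is exactly what the paper's shift construction does in one stroke, uniformly in the arity of $a_{0}$.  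If you want to keep your strategy, you should at least indicate how the ``generic placement'' is defined when $b$ sits strictly between coordinates of $a_{0}$; otherwise the proof does not cover the case the corollary needs.
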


\begin{proof}
Let $\sigma \in \mathrm{Aut}_{L_{\mathrm{tr}}}(\mathbb{M}/b)$ be an automorphism so that $\sigma(a_{i}) = a_{i+1}$.  Let $B$ denote the $L$-structure generated by $\langle a_{i} : i \in \mathbb{Z} \rangle$ and let $A_{0}$ be the $L$-structure generated by $a_{0}b$.  Now expand the $L_{\mathrm{tr}}$-structure $\langle b(a_{i})_{i \in \mathbb{Z}} \rangle_{L_{\mathrm{tr}}}$ to an $L$-structure $M$ by setting
$$
R^{M} =  R^{B} \cup \bigcup_{i \in \mathbb{Z}} \sigma^{i}(R^{A_{0}}).
$$
\textbf{Claim 1}:  If $i \in \mathbb{Z}$ and $c,d \in B \cap \sigma^{i}(A_{0})$, then $(c,d) \in R^{B}$ if and only if $(c,d) \in \sigma^{i}(R^{A_{0}})$.  

\emph{Proof of claim}:  This is clear if $i = 0$, since $R^{B} = R^{\mathbb{M}} \upharpoonright B$ and $R^{A_{0}} = R^{\mathbb{M}} \upharpoonright A_{0}$.  In general, if $c,d \in B \cap \sigma^{i}(A_{0})$, there are $L_{\mathrm{tr}}$-terms $t,t',s,s'$ so that 
\begin{eqnarray*}
c &=& t(a_{<i},a_{i},a_{>i}) = t'(b,a_{i}) \\
d &=& s(a_{<i},a_{i},a_{>i}) = s'(b,a_{i}).  
\end{eqnarray*}
By indiscernibility, it follows that if $\sigma^{i}(c',d') = (c,d)$, then we have
\begin{eqnarray*}
c' &=& t(a_{<0},a_{0},a_{>0}) = t'(b,a_{0}) \\
d' &=& s(a_{<0},a_{0},a_{>0}) = s'(b,a_{0}),
\end{eqnarray*} 
and we know that $(c',d') \in R^{B}$ if and only if $(c',d') \in R^{A_{0}}$, by the $i = 0$ case.  By indiscernibility, $(c',d') \in R^{B}$ if and only if $(c,d) \in R^{B}$ and hence $(c,d) \in R^{B}$ if and only if $(c,d) \in \sigma^{i}(R^{A_{0}})$.  \qed

\noindent \textbf{Claim 2}:  If $i >0$ and $c,d \in A_{0} \cap \sigma^{i}(A_{0})$ then $(c,d) \in R^{A_{0}}$ if and only if $(c,d) \in \sigma^{i}(R^{A_{0}})$.  

\emph{Proof of claim}:  As in the proof of the previous claim, there are $L_{\mathrm{tr}}$-terms $t,t',s$, and $s'$ so that we have the following equalities:
\begin{eqnarray*}
c &=& t(a_{0},b) = t'(a_{i},b) \\
d &=& s(a_{0},b) = s'(a_{i},b).
\end{eqnarray*}  
Then by $L_{\mathrm{tr}}$-indiscernibility over $b$, we have also $t(a_{0},b) = t'(a_{i+1},b)$ and $t(a_{1},b) = t'(a_{i+1},b)$, hence $t(a_{0},b) = t(a_{1},b)$.  Likewise, we have $s(a_{0},b) = s(a_{1},b)$.  In particular, this shows $\sigma(c,d) = (c,d)$ so the claim follows.  \qed

Now, by Claim 1, it follows that for all $c,d \in B$, we have $(c,d) \in R^{M}$ if and only if $(c,d) \in R^{B}$, so $M$ extends $B$.  Likewise, by Claim 2, $M$ extends $A_{0}$ and $\sigma^{i}$ induces an $L$-isomorphism of $A_{0}$ and the structure generated by $ba_{i}$ in $M$, for all $i \in \mathbb{Z}$.  Embed $M$ into $\mathbb{M}$ over $B$ and let $b'$ be the image of $b$ under this embedding.  Then by quantifier-elimination, $a_{0}b \equiv a_{i}b'$ for all $i \in \mathbb{Z}$.  After applying Ramsey, compactness, and an automorphism, we can find $b'' \equiv_{a_{0}} b'$ so that $I$ is $L$-indiscernible over $b''$, completing the proof.  
\end{proof}

\begin{cor}
The theory $T$ is NTP$_{2}$ (and is, in fact, inp-minimal).
\end{cor}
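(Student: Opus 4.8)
The plan is to prove the stronger assertion that $T$ is inp-minimal; NTP$_2$ is then automatic, since by Fact \ref{NTP2 facts}(1) a theory with TP$_2$ admits a TP$_2$-formula in a single variable, and any TP$_2$-witness is in particular an inp-pattern of depth $2$ in that variable. So I would suppose toward a contradiction that there is an inp-pattern of depth $2$ in a single variable $x$: formulas $\varphi_0(x;y_0),\varphi_1(x;y_1)$, integers $k_0,k_1$, and an array $(a_{l,i})_{l<2,\,i\in\mathbb{Z}}$ with each row $k_l$-inconsistent and each path consistent. By the standard extraction argument---the inp-pattern version of Fact \ref{NTP2 facts}(2), cf.\ \cite{ChernikovNTP2}---I may further assume the array is mutually indiscernible (so each row $\overline{a}_l=\langle a_{l,i}:i\in\mathbb{Z}\rangle$ is indiscernible over the other row) and, reindexing, that the rows are indexed by $\mathbb{Z}$.

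The crucial point is that mutual $L$-indiscernibility passes to the reduct to $L_{\mathrm{tr}}$, since $L_{\mathrm{tr}}\subseteq L$; by Lemma \ref{its a fraisse class} the $L_{\mathrm{tr}}$-reduct of $\mathbb{M}$ models $T_{\mathrm{tr}}$, which is dp-minimal. Choosing $c$ realizing the path $\{\varphi_0(x;a_{0,0}),\varphi_1(x;a_{1,0})\}$---a single element, as $l(x)=1$---dp-minimality of $T_{\mathrm{tr}}$ provides $l<2$ with $\overline{a}_l$ being $L_{\mathrm{tr}}$-indiscernible over $c$. The row $\overline{a}_l$ is then $L$-indiscernible, $\mathbb{Z}$-indexed, and $L_{\mathrm{tr}}$-indiscernible over $c$, which are exactly the hypotheses of Lemma \ref{sequence stuff}; it yields $c'$ with $c'\equiv^L_{a_{l,0}}c$ such that $\overline{a}_l$ is $L$-indiscernible over $c'$. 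Since $c'\equiv_{a_{l,0}}c$ we retain $\models\varphi_l(c';a_{l,0})$, and full $L$-indiscernibility of the row over $c'$ then gives $\models\varphi_l(c';a_{l,i})$ for every $i\in\mathbb{Z}$; so $c'$ realizes $\{\varphi_l(x;a_{l,i}):i\in\mathbb{Z}\}$, contradicting its $k_l$-inconsistency. This gives inp-minimality, and hence NTP$_2$.

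I do not anticipate a serious obstacle: Lemma \ref{sequence stuff} does essentially all of the model-theoretic work, and it was tailored for exactly this use. The remaining steps are routine---reducing an inp-pattern to mutually indiscernible, $\mathbb{Z}$-indexed form (Ramsey and compactness, as for Fact \ref{NTP2 facts}(2)), observing that $L$-mutual-indiscernibility survives passing to the reduct, and keeping track that a path is realized by a single element so that dp-minimality of $T_{\mathrm{tr}}$ applies. The only real care needed is to run the argument for general depth-$2$ inp-patterns (two possibly distinct formulas), rather than just for TP$_2$, so that the NTP$_2$ clause and the stronger inp-minimality clause come out together.
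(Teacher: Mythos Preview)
Your proposal is correct and follows essentially the same route as the paper: reduce to a mutually indiscernible $\mathbb{Z}$-indexed array in one variable, use dp-minimality of the $L_{\mathrm{tr}}$-reduct to find a row that is $L_{\mathrm{tr}}$-indiscernible over a path-realizer, and then invoke Lemma~\ref{sequence stuff} to upgrade to $L$-indiscernibility and contradict row inconsistency. The only difference is that the paper's written proof argues literally for TP$_2$ (a single formula $\varphi$) and leaves the inp-minimality as a parenthetical, whereas you carry the argument through for a genuine depth-$2$ inp-pattern with two formulas $\varphi_0,\varphi_1$; this is the right thing to do to actually get inp-minimality, and it costs nothing extra.
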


\begin{proof}
If $T$ has TP$_{2}$, then, by Fact \ref{NTP2 facts} and compactness, there is an $L$-formula $\varphi(x;y)$ with $l(x) = 1$ that witnesses TP$_{2}$ with repect to the mutually indiscernible array $(a_{i,j})_{i < \omega, j \in \mathbb{Z}}$.  Let $b \models \{\varphi(x;a_{i,0}) : i < \omega\}$.  As $T_{\mathrm{tr}}$ is dp-minimal, there is a row $i = 0$ or $i = 1$ so that $\langle a_{i,j} : j \in \mathbb{Z} \rangle$ is $b$-indiscernible in the language $L_{\mathrm{tr}}$.  By Lemma \ref{sequence stuff}, there is $b' \equiv^{L}_{a_{i,0}} b$ such that $\langle a_{i,j} : j \in \mathbb{Z} \rangle$ is $b'$-indiscernible in the language $L$.  Then $b' \models \{\varphi(x;a_{i,j}) : j \in \mathbb{Z} \}$, contradicting the row-wise inconsistency required for TP$_{2}$.  
\end{proof}

\begin{prop}
$\kappa_{\mathrm{shred}}(T) = \infty$.  
\end{prop}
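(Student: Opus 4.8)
The goal is to show $\kappa^{1}_{\mathrm{shred}}(T)=\infty$, which gives $\kappa_{\mathrm{shred}}(T)=\infty$ (directly from $\kappa_{\mathrm{shred}}=\sup_{m}\kappa^{m}_{\mathrm{shred}}$, or via Theorem \ref{one var thm}). Following the pattern of the earlier examples, and most closely the treatment of $T^{*}$, it is enough to produce, for each regular cardinal $\kappa$, an increasing continuous $\langle A_{i}:i<\kappa\rangle$ and a single $1$-type $p$ over $\bigcup_{i<\kappa}A_{i}$ which explicitly shreds over $A_{i}$ for every $i<\kappa$; Proposition \ref{the main equivalence prop} then turns this into a shredding chain of models of length $\kappa$, so $\kappa^{1}_{\mathrm{shred}}(T)>\kappa$, and letting $\kappa$ range over all regular cardinals finishes the proof.

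Fix a regular $\kappa$. By recursion on $i<\kappa$ I would build, inside $\mathbb{M}\models T$, an element $a_{i}$ together with a sequence $\overline{b}_{i}=\langle b_{i,j}:j<\omega\rangle$ of tuples of $\mathbb{M}$, so that: (a) $\overline{b}_{i}$ is indiscernible over $a_{<i}\overline{b}_{<i}$; (b) the $L_{\mathrm{tr}}$-configuration of $a_{i}\overline{b}_{i}$ is a fixed generic tree configuration in which the $b_{i,j}$ are arranged uniformly around $a_{i}$ (all in the cone $\{x:a_{i}\unlhd x\}$, say, in a ``fan'' with apex near $a_{i}$), and the only $R$-edges involving $\overline{b}_{i}$ are the edges $R(a_{i},b_{i,j})$, which hold exactly when $j$ is even; (c) the $a_{i}$ form an $\unlhd$-increasing chain. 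Such a configuration exists and embeds into $\mathbb{M}$ by the amalgamation and uniform local finiteness of $\mathbb{K}$ (Lemma \ref{its a fraisse class}); and, crucially, as in the $T^{*}$-argument the alternating edges $R(a_{i},b_{i,j})$ are invisible over the base $a_{<i}\overline{b}_{<i}$, so they do not disturb the indiscernibility demanded in (a). Set $p(x)=\{\,a_{i}\unlhd x:i<\kappa\,\}$, which is consistent since any $\unlhd$-increasing chain has an upper bound in $\mathbb{M}$.

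The heart of the argument is then to check that a suitable formula $\varphi(x;a_{i})$ expressing a condition on the position of $x$ relative to $a_{i}$ (e.g.\ $(a_{i}\unlhd x)$, possibly strengthened by a graph condition), which lies in $p\upharpoonright a_{<i+1}$, explicitly shreds over $a_{<i}$ with built-in witness $\overline{b}_{i}$: that is, for every $c\models\varphi(x;a_{i})$ and every $N<\omega$, $\overline{b}_{i,\geq N}$ fails to be indiscernible over $a_{<i}c$. Here both ingredients of the language are needed. The tree condition forces a realization $c$ to ``sit above'' $a_{i}$, which --- using only $\unlhd$ and $\wedge$ --- pins down uniformly in $j$ how $c$ meets each $b_{i,j}$ (essentially, $c$ sees the $b_{i,j}$'s in the same way $a_{i}$ does); granting this, the preinstalled alternating edges $R(a_{i},b_{i,j})$ translate into an alternating truth pattern of a single $L$-formula $\psi(x;\overline{y})$ evaluated at $c$ along $\overline{b}_{i}$, which is exactly a witness to explicit shredding in the form of Lemma \ref{finitization}. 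Finally one checks the bookkeeping (each $a_{<i}$ has size $|i|+\aleph_{0}$, as in Remark \ref{regular size}), so that Proposition \ref{the main equivalence prop} applies.

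The main obstacle is precisely the verification in the last paragraph: nailing down the exact tree configuration for the $b_{i,j}$ and the exact $L$-formula $\psi$ for which the position condition forces the alternation. The tension to resolve is that the $L_{\mathrm{tr}}$-reduct of $\mathbb{M}$ is NIP (dp-minimal), so the alternation can never be forced by $\unlhd$ and $\wedge$ alone, while $R$ on its own defines a simple theory, so it cannot be forced by $R$ alone; the construction must use the tree to anchor a realization of $\varphi$ to the location of $a_{i}$ and let the indiscernibility-respecting alternating edges at $a_{i}$ supply the actual oscillation. One should also double-check that the recursion preserves indiscernibility of $\overline{b}_{i}$ over the growing base at limit stages, and that the chosen $\overline{b}_{i}$ (once bundled into tuples, if necessary, to make it genuinely indiscernible over $a_{<i}\overline{b}_{<i}$) still admits the required witnessing $\psi$.
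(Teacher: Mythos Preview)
Your outline is the paper's argument, and the gap you flag yourself\textemdash the identity of $\psi$\textemdash is the only missing piece. The point is that $\psi$ must be a formula in $x$ and the indiscernible parameter $y$ alone (not in $a_{i}$, which does not lie in the base $A_{i}=a_{<i}\overline{b}_{<i}$), and the meet function supplies exactly the needed term: take $\psi(x;y)=R(x\wedge y,\,y)$. With this $\psi$ in hand, the cleanest configuration is not your fan above $a_{i}$ but the paper's: place the $b_{i,j}$ \emph{incomparable} to $a_{i}$, all branching off at the common point $m_{i}:=a_{i}\wedge b_{i,0}$ with $a_{i'}\vartriangleleft m_{i}$ for $i'<i$, and install $R(m_{i},b_{i,j})$ exactly for $j$ even. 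Then every $c\unrhd a_{i}$ satisfies $c\wedge b_{i,j}=m_{i}$ for \emph{all} $j$, so $\psi(c;b_{i,j})=R(m_{i},b_{i,j})$ alternates outright and $x\unrhd a_{i}$ explicitly shreds over $A_{i}$ with witness $\overline{b}_{i}$.

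Your cone-above-$a_{i}$ picture also works with the same $\psi$, but is a hair messier: a realization $c\unrhd a_{i}$ can lie in the branch of a single $b_{i,j_{0}}$, making $c\wedge b_{i,j_{0}}\neq a_{i}$ at that one index; you still get that no tail of $\overline{b}_{i}$ is $c$-indiscernible (hence shredding), but you lose the clean implication $\varphi\vdash\psi(x;b_{i,\delta})\leftrightarrow\neg\psi(x;b_{i,\delta+1})$ at $\delta=j_{0}$. The paper's side-branch placement removes this exception.
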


\begin{proof}
Let $\kappa$ be an arbitrary regular cardinal.  Inductively, we may choose a sequence of elements $\langle a_{i} : i < \kappa\rangle$ and a sequence of sequences $\langle \overline{b}_{i} : i < \kappa \rangle$ so that 
\begin{enumerate}
\item For all $i < \kappa$, $\overline{b}_{i} = \langle b_{i,j} : j < \omega \rangle$ is an $a_{<i}\overline{b}_{<i}$-indiscernible sequence of pairwise incomparable elements, incomparable with $a_{i}$, with $b_{i,j} \wedge b_{i,j'} = a_{i} \wedge b_{i,j}$ for all $j \neq j'$ and $R(a_{i}\wedge b_{i,j},b_{i,j})$ if and only if $j$ is even.
\item If $i < i' < \kappa$, then $a_{i} \vartriangleleft a_{i'} \wedge b_{i',j}$ for all $j$. 
\end{enumerate}

There is no problem continuing the induction, since $T$ is the generic $\wedge$-tree with a random graph.  

\begin{center}
\includegraphics[scale=.09]{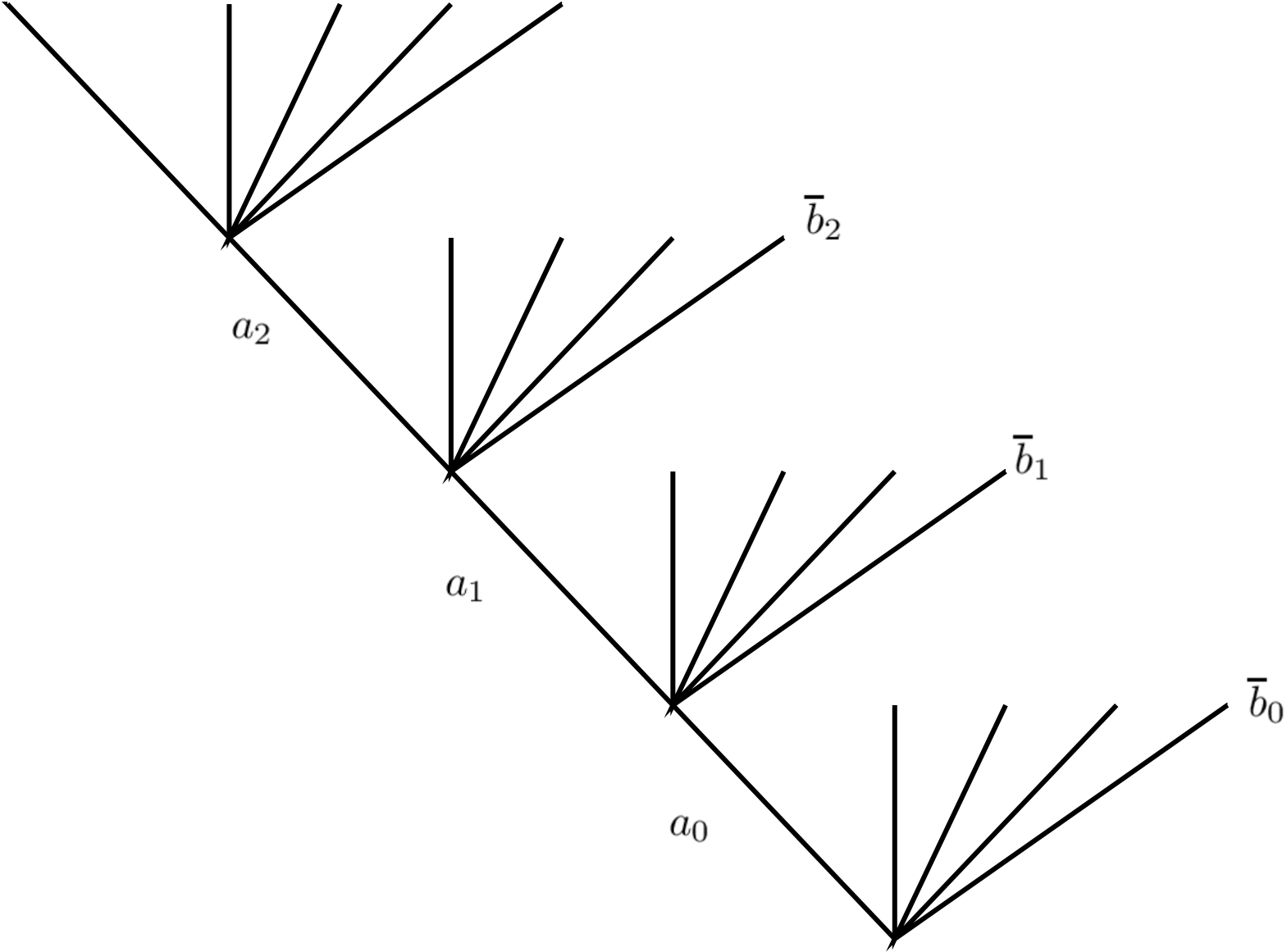} 
\captionof{figure}{Illustration of the choice of $a_{i}$ and $\overline{b}_{i}$}
\end{center}

Let $p(x) = \{x \unrhd a_{i} : i < \kappa \}$.  Notice that if $x \unrhd a_{i}$, then $x \wedge b_{i,j} = a_{i} \wedge b_{i,j}$ and hence $x \unrhd a_{i} \vdash R(x \wedge b_{i,j}, b_{i,j})$ if and only if $j$ is even.  It follows that the formula $x \unrhd a_{i}$ explicitly shreds over $a_{<i}$.  As $\kappa$ is arbitrary, $\kappa_{\mathrm{shred}}(T)  = \infty$.  
\end{proof}

\section{A criterion for singular compactness}

In this section, we give a sufficient condition for having singular compactness, which is the negation of exact saturation (Definition \ref{singular compactness} below).  If $\Delta\left(x,y\right)$ is a set of formulas over $\emptyset$, then a (partial) \emph{$\Delta$-type} is a consistent set of instances of formulas from $\Delta$. We may refer to a $\{\varphi\}$-type as a $\varphi$-type. It is important to note that by a $\varphi$\emph{-type} we mean a consistent set of \emph{positive} instances of $\varphi$, and do not include instances of $\neg \varphi$.  

\begin{defn} \label{singular compactness}
Suppose that $T$ is a complete first order theory and $\Delta$ is a set of formulas over $\emptyset$.
Say that $T$ has\emph{ singular compactness} for $\Delta$ if whenever
$M\models T$ is $\mu$-saturated for a singular cardinal $\mu>\left|T\right|$
then $M$ is $\mu^{+},\Delta$-saturated: for every $\Delta$-type
$p$ over a set $A\subseteq M$ with $\left|A\right|\leq\mu$, $p$
is realized in $M$. 
\end{defn}

\begin{condition} \label{cond:for singular compactness} 
For every formula $\varphi\left(x,y\right)$
(perhaps in a fixed set of formulas $\Delta$) there is some formula
$\theta_{\varphi}\left(x,z\right)$ such that for any finite $\varphi$-type
$r\left(x\right)$ over $M\models T$ and every finite set $A\subseteq M^{x}$
of realizations of $r$ there is some $b\in M^{z}$ such that $\theta_{\varphi}\left(A,b\right)$
holds (i.e., $M\models\theta_{\varphi}\left(a,b\right)$ for all $a\in A$)
and $\theta_{\varphi}\left(x,b\right)\vdash r\left(x\right)$. 
\end{condition}

\begin{lem}
\label{lem:sufficient condition}Suppose that $T$ is a complete first
order theory and that Condition \ref{cond:for singular compactness}
holds for $\Delta\left(x,y\right)$. Then $T$ has singular compactness
for $\Delta$. 
\end{lem}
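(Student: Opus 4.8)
The plan is to exploit the singularity of $\mu$ to cut $p$ into fewer than $\mu$ pieces each of size $<\mu$, to compress each piece into a single $\theta_\varphi$-formula using Condition~\ref{cond:for singular compactness} together with the $\mu$-saturation of $M$, and then to realize the resulting small $\theta$-type in $M$ by $\mu$-saturation once more. For the setup, write $\kappa=\mathrm{cf}(\mu)$ and fix an increasing continuous sequence $\langle\mu_i:i<\kappa\rangle$ of cardinals cofinal in $\mu$ with $\kappa+|T|<\mu_0$ and $\mu_i<\mu$ for all $i<\kappa$. Let $p$ be a $\Delta$-type over $A\subseteq M$ with $|A|\le\mu$; we may assume $|A|=\mu$, since otherwise $p$ is a type over a set of size $<\mu$ and is realized in $M$ by $\mu$-saturation. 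Note $p$ is finitely satisfiable in $M$: a finite $r\subseteq p$ has all parameters in $A\subseteq M$ and $\exists x\,\bigwedge r$ holds in $\mathbb{M}$, hence in $M$ since $M\prec\mathbb{M}$. Write $A=\bigcup_{i<\kappa}A_i$ as an increasing continuous union with $|A_i|\le\mu_i$, and set $p_i=p\upharpoonright A_i$, with $p_{i,\varphi}$ the $\varphi$-part of $p_i$; thus each $p_i$ is a consistent $\Delta$-type over a set of size $<\mu$, $p=\bigcup_{i<\kappa}p_i$, and $p_i=\bigcup_{j<i}p_j$ for limit $i$.

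The construction is by induction on $i<\kappa$, producing an element $c_i\in M$ together with elements $b_{i,\varphi}\in M$ for each $\varphi$ occurring in $p_i$, so that: (i) $c_i$ realizes $p_i$; (ii) $c_i$ satisfies $\theta_\psi(x,b_{j,\psi})$ for every $j\le i$ and every $\psi$ occurring in $p_j$; and (iii) $\theta_\varphi(x,b_{i,\varphi})\vdash p_{i,\varphi}$ and $M\models\theta_\varphi(c_i,b_{i,\varphi})$. To carry out stage $i$, I would first realize in $M$ the partial type consisting of $p_i$ together with all $\theta$-formulas chosen at stages $<i$; its domain has size $<\mu$, so it is realized by some $c_i\in M$ by $\mu$-saturation, provided one checks that it is consistent (at limit $i$ this is automatic: by (iii) at earlier stages these $\theta$-formulas already imply $\bigcup_{j<i}p_j=p_i$, and their consistency was maintained via the earlier $c_j$'s). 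Then, for each $\varphi$ occurring in $p_i$, consider the partial type in $z$ given by
\[
\{\theta_\varphi(c_i,z)\}\cup\{\forall x\,(\theta_\varphi(x,z)\to\varphi(x,a)):\varphi(x,a)\in p_{i,\varphi}\}.
\]
Its domain has size $\le\mu_i+|T|<\mu$, and it is finitely satisfiable in $M$: given a finite subtype $r\subseteq p_{i,\varphi}$, apply Condition~\ref{cond:for singular compactness} to $r$ and the finite set $\{c_i\}\subseteq r(M)$ to get $b\in M$ with $\theta_\varphi(c_i,b)$ and $\theta_\varphi(x,b)\vdash r$. Hence this type is realized in $M$ by some $b_{i,\varphi}$, giving (iii).

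Granting the construction, set $q(x)=\{\theta_\varphi(x,b_{i,\varphi}):i<\kappa,\ \varphi\text{ occurs in }p_i\}$. By (iii), $q\vdash\bigcup_i p_i=p$; the domain of $q$ is contained in $\{b_{i,\varphi}\}$, of size $\le\kappa\cdot|\Delta|<\mu$ (using $|\Delta|\le|T|<\mu$ and $\mu$ a limit cardinal); and $q$ is finitely satisfiable in $M$, since any finite subset of $q$ involves only $b_{j,\psi}$ with $j\le i$ for a single $i<\kappa$, and $c_i$ satisfies all the corresponding $\theta_\psi(x,b_{j,\psi})$ by (ii). Therefore $q$ is a consistent type over a subset of $M$ of size $<\mu$, so by $\mu$-saturation it is realized in $M$, and any realization of $q$ realizes $p$. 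This gives that $M$ is $\mu^{+},\Delta$-saturated, which is Lemma~\ref{lem:sufficient condition}.

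The step I expect to be the main obstacle is the consistency check at the start of a successor stage $i=i'+1$, namely that the next chunk $p_i$ of $p$ is still consistent with all the $\theta$-formulas committed to at stages $\le i'$ (limit stages, as noted, are free). The difficulty is that the most direct way to guarantee this — keeping the committed $\theta$-formulas compatible with all of $p$ — would force the relevant auxiliary types to mention parameters from all of $A$, hence to have domain $\mu$, outside the reach of $\mu$-saturation. So the inductive invariant must be arranged so that at each stage one only needs compatibility with the next portion of $p$ of size $<\mu$ before moving on; making this bookkeeping precise, and checking it is preserved at limits, is the real content of the argument.
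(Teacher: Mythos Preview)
You have correctly identified the obstacle, but you have not resolved it, and your inductive framework as stated does not close. At a successor stage $i=i'+1$ you need $p_i\cup\{\theta_\psi(x,b_{j,\psi}):j\le i'\}$ to be consistent. The only witness you have for the consistency of the $\theta$-formulas is $c_{i'}$, and $c_{i'}$ was only required to realize $p_{i'}$, not $p_i$. Nothing in clauses (i)--(iii) forces $\theta_\varphi(x,b_{j,\varphi})$ to remain compatible with the new formulas in $p_i\setminus p_{i'}$; since each $b_{j,\varphi}$ was chosen merely to satisfy $\theta_\varphi(c_j,b_{j,\varphi})$ for a single $c_j$, there is no reason it should be. Your closing paragraph proposes strengthening the invariant so that each $\theta$-commitment is only required to cohere with ``the next portion of $p$'', but this does not propagate: at stage $i'+1$ you need the $\theta$-formulas from \emph{all} earlier stages, not just stage $i'$, to cohere with $p_{i'+1}$, and those earlier ones were never asked to.

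The paper avoids the induction entirely with one idea you are missing: first choose \emph{all} the realizations at once, one $b_i\in M$ with $b_i\models p\!\upharpoonright\! A_i$ for each $i<\kappa$, and only afterwards choose the compressing parameters. The point is that for $j\ge i$ (taking the $A_i$ increasing) every $b_j$ realizes $p\!\upharpoonright\! A_i$; so by Condition~\ref{cond:for singular compactness} applied to finite subsets of $\{b_j:j\ge i\}$ and compactness, there is $e_i^\varphi$ in the monster with $\theta_\varphi(b_j,e_i^\varphi)$ for \emph{all} $j\ge i$ and $\theta_\varphi(x,e_i^\varphi)\vdash p_{i,\varphi}$. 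Pulling $e_i^\varphi$ into $M$ by $\mu$-saturation over $A_i\cup\{b_j:j<\kappa\}$ (a set of size $<\mu$) gives $d_i^\varphi\in M$ with the same properties. Now $\{\theta_\varphi(x,d_i^\varphi):i<\kappa,\varphi\in\Delta\}$ is finitely satisfiable in $M$: any finite piece has a largest index $i^*$, and $b_{i^*}$ realizes it. This is exactly the consistency you could not secure inductively, and it comes for free once the compressing parameter is asked to work for the whole tail of pre-chosen realizations rather than for a single $c_i$.
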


\begin{proof}
Let $p$ be a $\Delta$-type over a set $A$ with $\left|A\right|=\mu$ and suppose $A \subseteq M$, a $\mu$-saturated model of $T$. Write $A=\bigcup_{i<\kappa}A_{i}$
with $\left|A_{i}\right|<\mu$, $\kappa<\mu$. For each $i<\kappa$
find $b_{i}\in M$ such that $b_{i}\models p|_{A_{i}}$ (exists by
$\mu$-saturation). 

By compactness and Condition \ref{cond:for singular compactness},
for each $\varphi\in\Delta$ find $e_{i}^{\varphi}\in\mathbb{M}^{z}$ such
that $\theta_{\varphi}\left(b_{j},e_{i}^{\varphi}\right)$ holds for
all $j\geq i$ and $\theta_{\varphi}\left(x,e_{i}^{\varphi}\right)\vdash\text{tp}^{+}_{\varphi}\left(b_{i}/A_{i}\right)$, the (positive) $\varphi$-type of $b_{i}$ over $A_{i}$.  
 By $\mu$-saturation, find $d_{i}^{\varphi}\in M$ such that $d_{i}^{\varphi}\equiv_{A_{i}\cup\{b_{i} : i<\kappa\}}e_{i}^{\varphi}$.
Then $\{\theta_{\varphi}\left(x,d_{i}^{\varphi}\right): i<\kappa,\varphi\in\Delta\}$
is a type and hence realized in $M$. 
\end{proof}
When does Condition \ref{cond:for singular compactness} hold? If
$T$ is complicated enough, e.g., $T=PA$ or $T=ZFC$, then it holds
since given $\varphi\left(x,y\right)$, we can choose $\theta_{\varphi}\left(x,z\right)=x\in z$.
Indeed, this condition implies that the theory cannot be too tame. 
\begin{prop}
Assume $T$ has infinite models.  If Condition \ref{cond:for singular compactness} holds for every
formula with one variable $x$ then $T$ has TP$_{2}$,
and has SOP$_{n}$ for all $n$.
\end{prop}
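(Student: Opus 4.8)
The plan is to show that Condition \ref{cond:for singular compactness}, applied to a few well-chosen one-variable formulas, already forces highly complex definable configurations, and then to read off TP$_{2}$ and all SOP$_{n}$ by compactness. Since the hypothesis gives Condition \ref{cond:for singular compactness} for \emph{every} one-variable formula, I am free to re-apply it to formulas built along the way. First I would treat the independence-flavoured content: apply the condition to $\varphi(x,y):=(x\neq y)$, obtaining $\theta(x,z)$. For a finite set $S$ and $A\subseteq S$, the $\varphi$-type $r(x):=\{x\neq s:s\in S\setminus A\}$ has realization set $\mathbb{M}\setminus(S\setminus A)$ and is realized by every element of $A$, so the condition produces $b$ with $A\subseteq\theta(\mathbb{M},b)\subseteq\mathbb{M}\setminus(S\setminus A)$, i.e.\ $\theta(\mathbb{M},b)\cap S=A$. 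Thus $\theta$ shatters every finite set and one has full control over the trace of a $\theta$-definable set on any prescribed finite set (in particular $T$ has IP). Applying the condition instead to $\varphi_{k}(x,y_{1},\dots,y_{k}):=\bigvee_{i<k}x=y_{i}$ (whose single-formula types have realization sets that are exactly the $\le k$-element sets) and then relativizing to the first-order condition ``$\theta_{\varphi_{k}}(\cdot,z)$ has at most $k$ elements'', one gets formulas $\theta_{k}(x,z)$ defining precisely the subsets of $\mathbb{M}$ of size $\le k$.

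From this stock of uniformly definable families I would build, for each $n$, the finite approximations to the desired configurations and then pass to the limit. For the SOP$_{n}$'s the natural target is a single \emph{definable strict partial order} with arbitrarily long finite chains: such an order — realized, say, as strict inclusion among the members of an appropriate family $\{\theta_{\psi}(\mathbb{M},b)\}$ manufactured from $\theta$ and the $\theta_{k}$ by further applications of Condition \ref{cond:for singular compactness} to Boolean combinations and to ``intersection'' and ``membership'' formulas — is $n$-cycle inconsistent for \emph{every} $n$ at once, and compactness upgrades ``arbitrarily long finite chains'' to an infinite one, giving SOP$_{n}$ for all $n\ge 3$ simultaneously. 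For TP$_{2}$ I would similarly assemble, for each $n$, an array of definable sets with pairwise/$k$-inconsistent rows and all paths consistent, and then obtain the genuine infinite array by compactness together with a standard mutual-indiscernibility extraction (Fact \ref{NTP2 facts}-style), so that a single formula witnesses TP$_{2}$.

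The step I expect to be the main obstacle is exactly the \emph{global} inconsistency required by both phenomena — the $k$-inconsistency of rows for TP$_{2}$ and the inconsistency of length-$n$ cycles for SOP$_{n}$ — because Condition \ref{cond:for singular compactness}, taken at face value, only controls a definable set on a prescribed \emph{finite} test set, while these requirements are assertions about all of $\mathbb{M}$; naive constructions leave uncontrolled ``garbage'' outside the test set that can destroy the inconsistency. The device for overcoming this is the second, ``upper'', clause of the condition, $\theta_{\varphi}(x,b)\vdash r(x)$: by arranging $\varphi$ and the $\varphi$-type $r$ so that $r(\mathbb{M})$ is \emph{itself} already globally structured (bounded in size, or linearly ordered by a definable relation produced at an earlier stage), this clause transfers the inconsistency from $r(\mathbb{M})$, where it is combinatorially transparent, onto the captured definable set, which is precisely what makes each finite approximation genuine and lets the compactness arguments go through.
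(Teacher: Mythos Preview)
Your first step --- applying the condition to $\varphi(x,y)=(x\neq y)$ to obtain $\theta$ which shatters every finite set --- is exactly the paper's opening move, and your diagnosis of the central difficulty (the condition only controls $\theta_\varphi(\mathbb{M},b)$ on a finite test set, while inconsistency is a global statement) is right on target.  The gap is in your proposed remedies.

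For SOP$_{n}$ you aim for a single definable strict partial order with infinite chains, i.e.\ SOP, which is stronger than what is asserted and is not obviously obtainable from the hypothesis.  Your ``bounded size'' families $\theta_{k}$ only yield chains of length $\le k+1$ for each fixed $k$; passing to varying $k$ destroys uniformity, and your appeal to a ``definable relation produced at an earlier stage'' is circular.  The paper does \emph{not} produce one partial order: it builds a tower $\varphi_{0}=\theta_{x\neq y}$, $\varphi_{i+1}=\theta_{\varphi_{i}}$, so that each application of the condition yields a \emph{genuine first-order implication} $\forall x\,[\varphi_{i+1}(x,b)\to\varphi_{i}(x,c)]$.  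For a given $n$ one chains $n$ such implications inside a single formula $\chi_{n}$; the $n$-cycle inconsistency then follows by tracing the implications around the cycle, and the infinite chain is built by compactness.  The witnessing formula depends on $n$.

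For TP$_{2}$ your sketch stops short of the key step.  The paper applies the condition a second time, now to $\neg\theta$: setting $\psi=\theta_{\neg\theta}$, for disjoint $s,t$ one first finds $c_{s}$ with $\theta(a_{i},c_{s})\Leftrightarrow i\in s$ (your shattering step), and then, taking $r_{s}=\{\neg\theta(x,c_{t}):t\neq s\}$ and $A_{s}=\{a_{i}:i\in s\}$, obtains $d_{s}$ with $\psi(x,d_{s})\vdash\neg\theta(x,c_{t})$ for all $t\neq s$.  This implication is the global inconsistency you were looking for, and $\theta(x,z)\wedge\psi(x,w)$ witnesses TP$_{2}$.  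Your intuition that the clause $\theta_{\varphi}(x,b)\vdash r(x)$ is what transfers global control is correct; what you are missing is that one should feed back the formula $\theta$ itself (or its negation) as the new $\varphi$, rather than trying to arrange $r(\mathbb{M})$ to be finite or pre-ordered.
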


\begin{proof}
We start by showing that $T$ has TP$_{2}$. Let $\varphi\left(x,z\right)$
be $\theta_{x\neq y}\left(x,z\right)$. Let $\psi\left(x,w\right)=\theta_{\neg \varphi}\left(x,w\right)$.
We will show that $\xi\left(x,zw\right)=\varphi\left(x,z\right)\land\psi\left(x,w\right)$
witnesses TP$_{2}$. Let $\{a_{i} : i<\omega\}$ be some infinite
set in $\mathbb{M}$. Suppose that $\mathcal{F}$ is an arbitrary family of pairwise disjoint
subsets of $\omega$. It is enough to find some $b_{s}\in M^{zw}$
for every $s\in\mathcal{F}$ such that $\xi\left(a_{i},b_{s}\right)$ holds
whenever $i\in s$, and $\left\{ \xi\left(x,b_{s}\right),\xi\left(x,b_{t}\right)\right\} $
is inconsistent for all $s \neq t$ from $\mathcal{F}$ (see \cite[Lemma 2.19]{kaplan2017some}). By compactness we may assume that $\mathcal{F}$ is finite and
consists of finite sets and replace $\omega$ by some $n<\omega$. 

By choice of $\varphi\left(x,z\right)$ there are $c_{s}$ for $s\in\mathcal{F}$
such that $\varphi\left(a_{i},c_{s}\right)$ holds iff $i\in s$: take the finite type $r_{s}=\{x\neq a_{i} : i\notin s\}$ and $A_{s}=\{a_{i} : i\in s\}$
and apply Condition \ref{cond:for singular compactness}. This already
shows that $T$ has the independence property so
is not NIP. 

We can similarly choose $d_{s}$ by applying Condition \ref{cond:for singular compactness}
for $\varphi$ and taking $r_{s}=\{\neg\varphi\left(x,c_{t}\right) : t\neq s,t\in\mathcal{F}\}$
and $A_{s}=\{a_{i}: i\in s\}$. Then obviously $\xi\left(a_{i},c_{s}d_{s}\right)$
holds if $i\in s$. Also, as $\psi\left(x,d_{s}\right)\vdash\neg\varphi\left(x,c_{t}\right)$
for $t\neq s$, we are done. 

Next we show that $T$ has SOP$_{n}$ for all $n<\omega$.

Let $\varphi_{0}\left(x,y_{0}\right)=\theta_{\neq}\left(x,y_{0}\right)$,
$\varphi_{1}\left(x,y_{1}\right)=\theta_{\varphi_{0}}\left(x,y_{1}\right)$ and in
general $\varphi_{n+1}\left(x,y_{n+1}\right)=\theta_{\varphi_{n}}\left(x,y_{n+1}\right)$.
Fix some $n$ with $3 \leq n<\omega$. Let $\chi_{n}\left(y_{0},\ldots,y_{n-1},x_{0};z_{0},\ldots,z_{n-1};x_{0}'\right)$
with $\left|z_{i}\right|=\left|y_{i}\right|$ say that
$$
(\forall x)[\varphi_{i+1}\left(x,y_{i+1}\right)\to\varphi_{i}\left(x,z_{i}\right) ]
$$
for all $i<n-1$ and $\varphi_{n-1}\left(x_{0},y_{n-1}\right)\land\neg\varphi_{0}\left(x_{0}',y_{0}\right)$.
We will show that $\chi = \chi_{n}$ witnesses SOP$_{n}$ for all $n\geq3$. 

Let $\langle a_{t} : t<\omega\rangle$ be some infinite sequence in $\mathbb{M}$.
For $t < \omega, i<n$, let $b_{t}^{i}\in\mathbb{M}^{y_{i}}$ be such that $\varphi_{i}\left(a_{s},b_{t}^{i}\right)$
holds iff $s\leq t$ (i.e., witnessing that $\varphi_{i}$ has the
order property) and $(\forall x)[\varphi_{i+1}\left(x,b_{t}^{i+1}\right)\to\varphi_{i}\left(x,b_{t'}^{i}\right)]$
for all $t'\geq t$. We may find such $b_{t}^{i}$'s by induction
on $i<n$ using Condition \ref{cond:for singular compactness} and
compactness as above. For $k<\omega$, let $\bar{b}_{k}=b_{k}^{0}\ldots b_{k}^{n-1}a_{k}$.
We have that for $k,l<\omega$, $\mathbb{M}\models\chi\left(\bar{b}_{k},\bar{b}_{l}\right)$ if and only if $k < l$.
However, it is impossible that $\{\chi\left(\bar{x}_{k},\bar{x}_{k+1}\right):k<n-1\}\cup\left\{ \chi\left(\bar{x}_{n-1},\bar{x}_{0}\right)\right\} $
is consistent, since if it were realized by $\bar{c}_{k}=c_{k}^{0}\ldots c_{k}^{n-1}d_{k}$
for $k<n$, then $\varphi_{n-1}\left(d_{0},c_{0}^{n-1}\right)\Rightarrow \varphi_{n-2}\left(d_{0},c_{1}^{n-2}\right)\Rightarrow \cdots\Rightarrow\varphi_{0}\left(d_{0},c_{n-1}^{0}\right)$
but as $\chi\left(\bar{c}_{n-1},\bar{c}_{0}\right)$ holds, we have that $\neg\varphi_{0}\left(d_{0},c_{n-1}^{0}\right)$ holds as well which is a contradiction. 
\end{proof}
We give an example where this criterion holds. 
\begin{exmp}
\label{exa:local example}Let $L=\{P_{i}:i<3\}\cup\left\{ R_{0,1},R_{0,2},R_{1,2}\right\} $
where the $P_{i}$s are unary predicates and the $R_{i,j}$s are binary
relation symbols. Let $T^{\forall}$ say that $\langle P_{i}: i<3\rangle$
are disjoint and their union covers the universe, that $R_{i,j}\subseteq P_{i}\times P_{j}$
and that:
\begin{itemize}
\item [$\oslash$]If $R_{1,2}\left(b,c\right)$ then $(\forall x)\left[R_{0,1}\left(x,b\right)\to R_{0,2}\left(x,c\right)\right]$.
\end{itemize}
\end{exmp}

\begin{claim}
\label{claim:JEP,AP}$T^{\forall}$ is universal, it has the amalgamation
property (AP) and the joint embedding property (JEP). 
\end{claim}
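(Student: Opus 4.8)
**The plan is to verify the three properties — universality, AP, and JEP — directly from the axiom list, with AP being the crux.**

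Universality is immediate: the axioms defining $T^{\forall}$ are that $\langle P_i : i < 3\rangle$ partition the universe, that $R_{i,j} \subseteq P_i \times P_j$, and the implication $\oslash$; each of these is a $\Pi_1$ sentence (the partition axioms and the containment axioms are $\Pi_1$, and $\oslash$ has the form $\forall b,c [R_{1,2}(b,c) \to \forall x(R_{0,1}(x,b) \to R_{0,2}(x,c))]$, which is $\Pi_1$). Hence $T^{\forall}$ is preserved under substructures, which is what universality means here. JEP will follow from AP by the usual argument (amalgamate two structures over the empty structure, which is a model of the universal theory $T^{\forall}$ since the empty $L$-structure vacuously satisfies all the axioms), so I would state that and concentrate on AP.

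For AP, suppose $A, B_0, B_1 \models T^{\forall}$ with $A$ an $L$-substructure of both $B_0$ and $B_1$ and $B_0 \cap B_1 = A$. I would build a structure $D$ on the set $B_0 \cup B_1$: interpret each $P_i^D = P_i^{B_0} \cup P_i^{B_1}$ (these still partition $D$ since the $B_i$ agree with $A$ on the sorts and the partition is determined sort-by-sort), and set $R_{0,1}^D = R_{0,1}^{B_0} \cup R_{0,1}^{B_1}$, and similarly $R_{1,2}^D = R_{1,2}^{B_0} \cup R_{1,2}^{B_1}$. The containment axioms $R_{i,j}^D \subseteq P_i^D \times P_j^D$ hold because they held in each $B_k$. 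The one nontrivial point is that $D$ must satisfy $\oslash$, and also — the real subtlety — that $R_{0,2}$ must be defined so as to make $D$ both an extension of $B_0$ and $B_1$ \emph{and} a model of $\oslash$. Since $\oslash$ forces certain $R_{0,2}$-edges whenever we have an $R_{1,2}$-edge and a compatible $R_{0,1}$-edge, I would define $R_{0,2}^D$ to be the closure: start with $R_{0,2}^{B_0} \cup R_{0,2}^{B_1}$ and add every pair $(x,c) \in P_0^D \times P_2^D$ such that there is $b \in P_1^D$ with $R_{1,2}^D(b,c)$ and $R_{0,1}^D(x,b)$.

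**The main obstacle is checking that this closure operation does not add any $R_{0,2}$-edges inside $B_0$ or inside $B_1$** — i.e. that $R_{0,2}^D \upharpoonright B_k = R_{0,2}^{B_k}$, so that $D$ genuinely extends each $B_k$. Suppose $(x,c)$ is a newly added edge with both $x,c \in B_0$; then there is $b$ with $R_{1,2}^D(b,c)$ and $R_{0,1}^D(x,b)$. If $b \in B_0$ as well, then $R_{1,2}^{B_0}(b,c)$ and $R_{0,1}^{B_0}(x,b)$, and since $B_0 \models \oslash$ we already have $R_{0,2}^{B_0}(x,c)$, contradiction. So $b \in B_1 \setminus A$; but then $R_{0,1}^D(x,b)$ came from $B_1$, forcing $x \in B_1$, so $x \in B_0 \cap B_1 = A$; similarly $R_{1,2}^D(b,c)$ came from $B_1$ (as $b \notin B_0$), forcing $c \in B_1$, so $c \in A$. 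Thus $x, c \in A$ and $b \in P_1^{B_1}$ with $R_{0,1}^{B_1}(x,b)$, $R_{1,2}^{B_1}(b,c)$; since $B_1 \models \oslash$ we get $R_{0,2}^{B_1}(x,c)$, and as $x,c \in A \subseteq B_0$ with $A$ a substructure, $R_{0,2}^{B_0}(x,c)$ holds too — contradiction again. (The symmetric case with $x,c \in B_1$ is identical, swapping roles.) After this is settled, one checks $D \models \oslash$ directly: the new edges were added precisely to satisfy it, and any instance of the hypothesis of $\oslash$ in $D$ with witnesses already present in a single $B_k$ was handled by $B_k \models \oslash$, while mixed witnesses are exactly what the closure accounts for. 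One more short check: the closure is applied only once (it does not need iterating) because a newly added $R_{0,2}$-edge never appears on the left of an $R_{0,1}$-edge or triggers further additions, as $R_{0,2}$ edges only appear as conclusions, never hypotheses, of $\oslash$. This completes AP, and JEP and universality follow as indicated.
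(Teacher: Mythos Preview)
Your proof is correct, but your amalgam differs from the paper's. For $R_{0,2}$ the paper takes the \emph{maximal} choice: beyond $R_{0,2}^{B_0}\cup R_{0,2}^{B_1}$ it throws in \emph{every} pair $(a,c)$ with $a\in P_0^{B_0}\setminus A,\ c\in P_2^{B_1}\setminus A$ (and the symmetric pairs). This makes the verification of $\oslash$ almost mechanical, since any cross-instance of the hypothesis automatically lands in the added block; there is then no need for the extension check you carry out, because no new $R_{0,2}$-edges are added inside either $B_k$ by construction. Your approach instead takes the \emph{minimal} closure under $\oslash$, which is equally valid but requires precisely the argument you give to see that the closure adds nothing internal to $B_0$ or $B_1$. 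The paper also proves JEP directly by disjoint union rather than via AP over the empty structure, though your reduction is fine since the empty $L$-structure is indeed a model of $T^{\forall}$. Either construction yields the model completion, so nothing is lost; the paper's version is slightly quicker to check, while yours produces a canonical (smallest) amalgam.
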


\begin{proof}
The fact that $T^{\forall}$ is universal is clear. 

JEP: suppose that $M_{1},M_{2}\models T^{\forall}$ are disjoint.
Let $M$ be the following structure. As a set it is $M_{1}\cup M_{2}$.
For every relation symbol $Q\in L$, let $Q^{M}=Q^{M_{1}}\cup Q^{M_{2}}$. 

AP: suppose that $M_{0},M_{1},M_{2}\models T^{\forall}$ and $M_{0}\subseteq M_{1},M_{2}$
and $M_{0}=M_{1}\cap M_{2}$. Let $M$ be the following structure.
Its universe is just the union of the universes of $M_{1},M_{2}$.
For $i<3$, $P_{i}^{M}=P_{i}^{M_{1}}\cup P_{i}^{M_{2}}$. $R_{0,1}^{M}=R_{0,1}^{M_{1}}\cup R_{0,1}^{M_{2}}$
and similarly define $R_{1,2}^{M}=R_{1,2}^{M_{1}}\cup R_{1,2}^{M_{2}}$.
Let 
\begin{align*}
R_{0,2}^{M} & =R_{0,2}^{M_{1}}\cup R_{0,2}^{M_{2}}\\
 & \cup\{\left(a,b\right): a\in P_{0}^{M_{1}}\backslash M_{0},b\in P_{2}^{M_{2}}\backslash M_{0}\}\\
 & \cup\{\left(a,b\right) : a\in P_{0}^{M_{2}}\backslash M_{0},b\in P_{2}^{M_{1}}\backslash M_{0}\}.
\end{align*}
Let us check that $\oslash$ holds. Suppose that $M\models R_{1,2}\left(b,c\right)$.
Then we may assume that $b,c\in M_{1}$ (for $M_{2}$ it is the same
argument). Suppose that $M\models R_{0,1}\left(a,b\right)$. Then
if $a\in M_{1}$ then $M_{1}\models R_{0,2}\left(a,c\right)$. Otherwise
$a\in M_{2}$ and $b\in M_{0}$. If $c\in M_{0}$ as well, then $M_{2}\models R_{1,2}\left(b,c\right)\land R_{0,1}\left(a,b\right)$
so $M_{2}\models R_{0,2}\left(a,c\right)$ and we are done. Otherwise
$c\in M_{1}\backslash M_{0}$, in which case $R_{0,2}^{M}\left(a,c\right)$
holds by choice of $R_{0,2}^{M}$. 
\end{proof}
\begin{cor}
%\cite[Theorem 4.4.7]{TentZiegler} 
$T^{\forall}$ has a model completion
$T$ which has quantifier elimination. 
\end{cor}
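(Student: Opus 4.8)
The plan is to produce $T$ as the theory of the Fra\"iss\'e limit of the class $\mathbb{K}$ of finite models of $T^{\forall}$, exactly mirroring the corollary following Lemma~\ref{its a fraisse class}. First one checks that $\mathbb{K}$ is a Fra\"iss\'e class: the language $L$ is finite and relational, so every substructure of a finite $L$-structure is finite and there are only countably many isomorphism types of finite $L$-structures; the hereditary property holds because $T^{\forall}$ is universal (every axiom, including $\oslash$, is a $\forall$-sentence) and hence preserved under substructure; and the joint embedding and amalgamation properties are exactly Claim~\ref{claim:JEP,AP}. Let $M$ be the resulting Fra\"iss\'e limit and put $T = \mathrm{Th}(M)$. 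By the standard theory of Fra\"iss\'e limits in a finite relational language, $M$ is ultrahomogeneous, $T$ eliminates quantifiers (and is $\aleph_{0}$-categorical), and in particular $T$ is model-complete.

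It remains to see that $T$ is the model companion of $T^{\forall}$, and in fact a model completion. Since $T^{\forall}$ is universal it is also preserved under unions of chains, so $M$, being the union of a chain of members of $\mathbb{K}$, is a model of $T^{\forall}$; hence every model of $T$ is a model of $T^{\forall}$. Conversely, every $N \models T^{\forall}$ embeds into a model of $T$: one builds an increasing $\omega$-chain $N = N_{0}\subseteq N_{1}\subseteq\cdots$ of models of $T^{\forall}$ in which $N_{s+1}$ realizes, over every finite $A \subseteq N_{s}$, every one-point extension of $\langle A\rangle$ lying in $\mathbb{K}$. To fuse all these extensions into a single $N_{s+1} \models T^{\forall}$ one amalgamates over $N_{s}$; this is legitimate because the amalgamation recipe in the proof of Claim~\ref{claim:JEP,AP} is completely uniform in the cardinality of the structures involved (the interpretation of $R_{0,2}$ on the amalgam is given by an explicit union, and the verification of $\oslash$ uses no finiteness), so $T^{\forall}$ has amalgamation over arbitrary models of $T^{\forall}$, not merely over finite ones. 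The union $N' = \bigcup_{s}N_{s}$ is then existentially closed, and by quantifier elimination for $T$ together with the characterization of $M$ as the countable ultrahomogeneous model one sees that the existentially closed models of $T^{\forall}$ are precisely the models of $T$; so $N'\models T$. This shows $T$ is the model companion of $T^{\forall}$, and since $T^{\forall}$ has amalgamation, $T \cup \mathrm{Diag}(A)$ is complete for every $A \models T^{\forall}$, i.e.\ $T$ is a model completion.

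The only part requiring genuine care is the identification of $\mathrm{Th}(M)$ with the (first-order axiomatizable) theory of the existentially closed models of $T^{\forall}$ — that is, that a model companion exists at all — but this is precisely the standard output of the Fra\"iss\'e construction in a finite relational language, and has already been invoked above for $T_{n}$ and for the theory of Lemma~\ref{its a fraisse class}. Everything else is routine bookkeeping.
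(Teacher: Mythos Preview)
Your proposal is correct and is essentially the argument the paper has in mind. The paper states the corollary with no proof at all, treating it as an immediate consequence of Claim~\ref{claim:JEP,AP} together with the standard fact that a universal theory in a finite relational language with JEP and AP has a model completion with quantifier elimination; this is exactly the Fra\"iss\'e route you take, and the paper has already invoked the identical pattern after Lemma~\ref{NSOP1 Fraisse class}. One minor comment: the detour through existentially closed models is more than you need. Once you know $M \models T^{\forall}$ and every finite model of $T^{\forall}$ lies in $\mathbb{K} = \mathrm{Age}(M)$, you get $T_{\forall} = T^{\forall}$ directly, and then compactness alone shows every model of $T^{\forall}$ embeds in a model of $T$; combined with QE this gives the model completion without ever building an $\omega$-chain or identifying the e.c.\ models.
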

%
%\begin{defn}
%\cite[Definition 2.5]{MR1402297} Fix $n\geq3$. We say that a theory
%$T$ has the \emph{$n$-strong order property} or \emph{SOP$_{n}$
%}if there is a formula $\varphi\left(x,y\right)$ with $\left|x\right|=\left|y\right|$
%and a sequence $\langle a_{i}: i<\omega \rangle$ in some $M\models T$
%such that $M\models\varphi\left(a_{i},a_{j}\right)$ for all $i<j$
%and $\left\{ \varphi\left(x_{1},x_{2}\right),\varphi\left(x_{2},x_{3}\right),\ldots,\varphi\left(x_{n},x_{1}\right)\right\} $
%is inconsistent. We say that $T$ is\emph{ NSOP}$_{n}$ if it does
%not have SOP$_{n}$. 
%\end{defn}

\begin{prop}
$T$ is NSOP$_{4}$ and has SOP$_{3}$.
\end{prop}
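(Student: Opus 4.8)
The proof has two independent parts: an explicit SOP$_3$ configuration built by hand, and a cyclic amalgamation argument showing NSOP$_4$; both turn on the single nontrivial axiom $\oslash$ of $T^{\forall}$.

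For SOP$_3$, the plan is to work in $\mathbb M\models T$, let $\bar x=(x_0,x_1,x_2)$ range over triples with $x_i\in P_i$, and set
$$
\chi(\bar x;\bar x')\;:=\;R_{0,1}(x_0',x_1)\;\wedge\;R_{1,2}(x_1',x_2)\;\wedge\;\neg R_{0,2}(x_0,x_2').
$$
First I would realize the chain: for each $n<\omega$ build the finite $L$-structure $N_n$ with universe $\{p_t,q_t,r_t:t<n\}$ ($p_t\in P_0$, $q_t\in P_1$, $r_t\in P_2$) by declaring $R_{0,1}^{N_n}=\{(p_a,q_b):b<a\}$, $R_{1,2}^{N_n}=\{(q_a,r_b):b<a\}$ and $R_{0,2}^{N_n}=\{(p_a,r_b):a\ge b\}$. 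Then $N_n$ satisfies $\oslash$ — if $R_{1,2}(q_a,r_b)$ and $R_{0,1}(p_c,q_a)$ then $b<a<c$, so $R_{0,2}(p_c,r_b)$ holds — hence $N_n$ is a finite model of $T^{\forall}$; and writing $\bar a_t=(p_t,q_t,r_t)$ one checks $N_n\models\chi(\bar a_s,\bar a_t)$ iff $s<t$. Since $T$ is the model completion of $T^{\forall}$, compactness yields $\langle\bar a_i:i<\omega\rangle$ in $\mathbb M$ with $\models\chi(\bar a_i,\bar a_j)$ iff $i<j$. Finally, $\{\chi(\bar y_0,\bar y_1),\chi(\bar y_1,\bar y_2),\chi(\bar y_2,\bar y_0)\}$ is inconsistent: the $R_{1,2}$-conjunct of $\chi(\bar y_0,\bar y_1)$, namely $R_{1,2}((\bar y_1)_1,(\bar y_0)_2)$, and the $R_{0,1}$-conjunct of $\chi(\bar y_1,\bar y_2)$, namely $R_{0,1}((\bar y_2)_0,(\bar y_1)_1)$, share the $P_1$-element $(\bar y_1)_1$, so $\oslash$ forces $R_{0,2}((\bar y_2)_0,(\bar y_0)_2)$, contradicting the $\neg R_{0,2}$-conjunct of $\chi(\bar y_2,\bar y_0)$. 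Thus $\chi$ witnesses SOP$_3$.

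For NSOP$_4$, by quantifier elimination it suffices to refute SOP$_4$ for quantifier-free $\psi(\bar x,\bar y)$. Suppose $\langle\bar a_i:i<\omega\rangle$ satisfies $\models\psi(\bar a_i,\bar a_j)$ iff $i<j$; extracting an indiscernible sequence we may take the $\bar a_i$ to be $L$-indiscernible, let $\theta(\bar x,\bar y)$ be the quantifier-free formula isolating $\mathrm{qftp}(\bar a_0,\bar a_1)$ (so $\theta\vdash\psi$), and let $B$ be the $L$-structure induced on the coordinates of $\bar a_0,\bar a_1$ (there are no function symbols). I would construct a finite model $N\models T^{\forall}$ carrying pairwise-disjoint tuples $\bar c_0,\bar c_1,\bar c_2,\bar c_3$ with $\mathrm{qftp}^N(\bar c_i,\bar c_{i+1})=\theta$ for $i\in\mathbb Z/4$, as follows: take four copies $B^{(i)}\cong B$ identifying $\bar a_0\bar a_1$ with $\bar c_i\bar c_{i+1}$; by indiscernibility $\langle\bar a_0\rangle\cong\langle\bar a_1\rangle$, so $B^{(i)}$ and $B^{(i+1)}$ agree on $\langle\bar c_{i+1}\rangle$ and can be glued there, while $B^{(i)}$ and $B^{(i+2)}$ share nothing; this determines all relations except possibly some $R_{0,2}$-edges between $\bar c_0,\bar c_2$ and between $\bar c_1,\bar c_3$, and we add exactly the ones forced by $\oslash$. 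The closure neither changes any $B^{(i)}$ nor affects any pair $(\bar c_i,\bar c_{i+1})$, so $\mathrm{qftp}^N(\bar c_i,\bar c_{i+1})=\theta$; embedding $N$ into $\mathbb M$ gives $\theta(\bar c_i,\bar c_{i+1})$, hence $\psi(\bar c_i,\bar c_{i+1})$, for all $i\in\mathbb Z/4$, so the $4$-cycle of $\psi$ is consistent and $\psi$ is not a witness to SOP$_4$.

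The crux is the claimed harmlessness of the $\oslash$-closure, which I expect to be the main obstacle. Since $\oslash$ is triggered only by $R_{0,1}$- and $R_{1,2}$-edges and none of those are added, it is enough to examine, for each triple $(x,b,c)$ with $R_{0,1}(x,b)$ and $R_{1,2}(b,c)$ forcing $R_{0,2}(x,c)$, where $x$ and $c$ can lie: using that every $R_{0,1}$- or $R_{1,2}$-edge lies inside a single copy and that adjacent copies overlap in a complete tuple, one shows that when $x,c$ lie in a common copy $B^{(i)}$ the hub $b$ also lies in $B^{(i)}$, so $R_{0,2}(x,c)$ is already present there (as $B^{(i)}\models T^{\forall}$); and otherwise $x,c$ lie in opposite tuples and $R_{0,2}(x,c)$ is a new edge between $\bar c_0,\bar c_2$ or $\bar c_1,\bar c_3$, never between a constrained adjacent pair. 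This is precisely where the corresponding argument breaks for a $3$-cycle — there every pair of $\bar c_0,\bar c_1,\bar c_2$ is adjacent, the forced edge $R_{0,2}(\bar c_2,\bar c_0)$ does meet a constrained pair, and in fact the $3$-cycle is inconsistent — so the gap between SOP$_3$ and NSOP$_4$ here is essentially a matter of the parity of the cycle length.
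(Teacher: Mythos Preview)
Your proof is correct in both parts, with one small technical gap in the NSOP$_4$ half.

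For SOP$_3$ you work directly from the definition, exhibiting a formula $\chi$ on triples and verifying the $3$-cycle is inconsistent via $\oslash$. The paper instead invokes Shelah's two-formula criterion (Claim 2.19 of \cite{shelah1996toward}): it takes $\varphi(x,y')=R_{0,1}(x,y')$ and $\psi(x,y'')=\neg R_{0,2}(x,y'')$, builds a sequence $\langle a_i',a_i'',b_i\rangle$ with the right pattern, and reads off the incompatibility from $\oslash$. Your direct construction is self-contained and avoids citing the criterion; the paper's route is shorter once the criterion is in hand. The underlying combinatorics are the same.

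For NSOP$_4$ your argument and the paper's are the same idea---close a $4$-cycle by cyclic amalgamation and check that $\oslash$ forces only $R_{0,2}$-edges between \emph{opposite} tuples---but the bookkeeping differs. The paper first forms the specific amalgams $M'$ (on $A_1A_2A_3$) and $M''$ (on $A_2A_3A_4$) using the explicit recipe from the AP proof, which adds \emph{all} $R_{0,2}$-edges across the new parts, not just the forced ones; it then proves a bullet point that any common extension $N$ of $M',M''$ with $N\upharpoonright A_1A_4\models T^{\forall}$ automatically satisfies $\oslash$, via a longer case analysis, and finally twists $A_1A_4$ by the swap $\sigma$ to realize the cycle. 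Your ``glue four copies and take the minimal $\oslash$-closure'' is more symmetric and arguably cleaner; the paper's version makes the interaction with the AP construction explicit.

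The small gap: you assert the $\bar c_i$ are pairwise disjoint while simultaneously requiring $\mathrm{qftp}^N(\bar c_i,\bar c_{i+1})=\theta=\mathrm{qftp}(\bar a_0,\bar a_1)$. If $\bar a_0,\bar a_1$ share a coordinate, $\theta$ contains an equality and cannot be realized by disjoint tuples. The paper handles this by noting that indiscernibility forces any element of $A_i\cap A_j$ (for $i\neq j$) to lie in $\bigcap_k A_k$, and carries this through the case analysis. You should either make the same observation and allow the common part to be shared among all four $\bar c_i$, or argue up front that constant coordinates can be stripped out (absorbing them into parameters of $\psi$) so that the $\bar a_i$ may be assumed pairwise disjoint. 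Either fix is routine.
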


\begin{proof}
We start by showing that $T$ is NSOP$_{4}$. Suppose that $\langle a_{i} : i<\omega \rangle$
is an indiscernible sequence in some model $M\models T$
which witnesses SOP$_{4}$. Let $A_{i}$ be $a_{i}$ as a set. Let
$M_{0}=A_{2}$, $M_{0}'=A_{3}$, $M_{1}=A_{1}A_{2}$, $M_{2}=A_{2}A_{3}$
and $M_{3}=A_{3}A_{4}$ with the induced structure from $M$. So all
are models of $T^{\forall}$. Let $M'$ be the amalgam of $M_{1},M_{2}$
over $M_{0}$ as defined in the proof of Claim \ref{claim:JEP,AP},
and similarly let $M''$ be the amalgam of $M_{2},M_{3}$ over $M_{0}'$.
Note that both $M'$ and $M''$ contain $M_{2}$ as a substructure
and that the universe of $M'$ is $A_{1}A_{2}A_{3}$ and of $M''$
is $A_{2}A_{3}A_{4}$, but neither are necessarily substructures of
$M$. 

Now we can amalgamate $M'$ and $M''$ over $M_{2}$. Moreover,
\begin{itemize}
\item [$\bullet$]Any structure $N$ whose universe is $A_{1}A_{2}A_{3}A_{4}$
which contains both $M'$, $M''$ as substructures and satisfies $T^{\forall}$
except perhaps $\oslash$, and such that $N\upharpoonright A_{1}A_{4}\models T^{\forall}$
will be a model of $T^{\forall}$ (i.e., $\oslash$ just follows). 
\end{itemize}
To see this, suppose that $N\models R_{1,2}\left(b,c\right)\land R_{0,1}\left(a,b\right)$.
We have to show that $N\models R_{0,2}\left(a,c\right)$. Note that
for every $x\in N$, if $x\in A_{i}\cap A_{j}$ for distinct $i,j\in\left\{ 1,\ldots,4\right\} $,
$x\in\bigcap_{i=1}^{4}A_{i}$ by indiscernibility. 

If $a,b,c$ all belong to either $A_{1}A_{2}A_{3}$, $A_{2}A_{3}A_{4}$
or $A_{1}A_{4}$ then this is clear, so assume this is not the case.

Suppose that $b,c\in A_{1}A_{2}A_{3}$, $a\in A_{4}$ (so $a\notin A_{1}A_{2}A_{3}$)
and $b\notin A_{1}$. Then if $b\in A_{2}\backslash A_{3}$ then $M''\models\neg R_{0,1}\left(a,b\right)$
--- contradiction, so $b\in A_{3}$. Then it must be that $c\in A_{1}\backslash A_{2}$
and $b\in A_{3}\backslash A_{2}$ so $M'\models\neg R_{1,2}\left(b,c\right)$
--- contradiction. 

If $b,c\in A_{1}A_{2}A_{3}$, $a\in A_{4}$ and $b\in A_{1}$ then
$c\notin A_{1}$. If $c\in A_{2}\backslash A_{3}$ then $M''\models R_{0,2}\left(a,c\right)$
so we are done. Else, $c\in A_{3} \setminus A_{2}$, so since $b\notin A_{2}$, $M'\models\neg R_{1,2}\left(b,c\right)$
--- contradiction. 

Suppose that $b\in A_{1}$ and $c\in A_{4}$. Then $a\in A_{2}A_{3}$.
If $a\in A_{2}\backslash A_{3}$ then $M''\models R_{0,2}\left(a,c\right)$
so we are done. Otherwise, $a\in A_{3}\backslash A_{2}$, so $M'\models\neg R_{0,1}\left(a,b\right)$
--- contradiction.

The case where $b\in A_{4}$ and $c\in A_{1}$ is done similarly. 

By symmetry, this covers all the cases so the bullet  is proved. 

Let $\sigma:A_{1}A_{4}\to A_{1}A_{4}$ be a bijection such that $\sigma(a_{1}) = a_{4}$ and $\sigma(a_{4}) = a_{1}$ as tuples (hence $\sigma^{2} = \mathrm{id}$).  Let $N_{0}$ be an amalgam of $M'$ and $M''$ over $M_{2}$ with domain $A_{1}A_{2}A_{3}A_{4}$.  Now define $N$ to be a structure with the same underlying set and the same interpretation of the unary predicates, but with each $R_{i,j}$ interpreted as follows:
$$
R^{N}_{i,j} = \left(R_{i,j}^{N_{0}} \setminus (A_{1}A_{4})^{2} \right) \cup \{(a,b) \in A_{1}A_{4} : M \models R_{i,j}(\sigma(a),\sigma(b))\}.  
$$
By indiscernibility, if $a,b$ are either both in $A_{1}$ or both in $A_{4}$, then $(a,b) \in R^{N}_{i,j}$ if and only if $(a,b) \in R_{i,j}^{M}$.  Then it is clear that $N$ has underlying set $A_{1}A_{2}A_{3}A_{4}$ and extends both $M'$ and $M''$, hence it satisfies the conditions in the bullet point above.  This shows $N\models T^{\forall}$,
and hence there is some $N'\models T$ containing $N$. 

But then, if $\varphi\left(x,y\right)$ is any quantifier-free formula
with $M\models\varphi\left(a_{1},a_{2}\right)$, then $N'\models\varphi\left(a_{1},a_{2}\right)\land\varphi\left(a_{2},a_{3}\right)\land\varphi\left(a_{3},a_{4}\right)\land\varphi\left(a_{4},a_{1}\right)$.
By quantifier elimination, $T$ is NSOP$_{4}$. 

Next we show that $T$ has SOP$_{3}$. For this we will use the following
criterion.
\begin{fact}
\cite[Claim 2.19]{shelah1996toward} For a theory $T$, having SOP$_{3}$
is equivalent to finding two formulas $\varphi\left(x,y\right),\psi\left(x,y\right)$
and a sequence $\langle a_{i},b_{i} : i<\omega \rangle$ in some $M\models T$
such that
\begin{itemize}
\item For all $i<j$, $M\models\neg\exists x\left(\varphi\left(x,a_{j}\right)\land\psi\left(x,a_{i}\right)\right)$. 
\item If $i\leq j$ then $M\models\varphi\left(b_{j},a_{i}\right)$ and
if $j<i$ then $M\models\psi\left(b_{j},a_{i}\right)$. 
\end{itemize}
\end{fact}
(The definition in \cite{shelah1996toward} additionally requires that $\{\varphi(x;y),\psi(x;y)\}$ is inconsistent, but this added condition is unnecessary:  given $\varphi$ and $\psi$ as above, one can replace $\varphi$ by $\varphi' = \varphi(x;y) \wedge \neg \psi(x;y)$ and then $\varphi'$ and $\psi$ will witness the above conditions).  

Let $\varphi\left(x,y'\right)=R_{0,1}\left(x,y'\right)$ and $\psi\left(x,y''\right)=\neg R_{0,2}\left(x,y''\right)$.
Let $\langle a_{i}',a_{i}'',b_{i} : i<\omega \rangle$ be a sequence such
that $R_{1,2}\left(a_{i}',a_{j}''\right)$ iff $i>j$, $R_{0,1}\left(b_{j},a_{i}'\right)$
whenever $i\leq j$ and $\neg R_{0,2}\left(b_{j},a_{i}''\right)$
whenever $i>j$. This sequence exists in some model $M\models T$
as we can define a model of $T^{\forall}$ which contains exactly
those elements. Now letting $a_{i}=\left(a_{i}',a_{i}''\right)$,
the first bullet follows from $\oslash$ and the second bullet by
the choice of $a_{i}',a_{i}''$ and $b_{i}$.
\end{proof}
\begin{cor}
There is a theory $T$ with NSOP$_{4}$ having SOP$_{3}$ such that
Condition \ref{cond:for singular compactness} holds with $\Delta=\left\{ R_{0,2}\left(x,y\right)\right\} $ and $\theta_{\varphi}$
from there being $R_{0,1}$. Thus $T$ has $\Delta$-singular compactness by Lemma \ref{lem:sufficient condition}. 
\end{cor}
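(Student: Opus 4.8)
The plan is to take $T$ to be the model completion of the universal theory $T^{\forall}$ of Example \ref{exa:local example}, whose existence and quantifier elimination are recorded in the Corollary immediately preceding. Since NSOP$_{4}$ and SOP$_{3}$ for $T$ are precisely the content of the Proposition just proved, the only remaining task is to check that Condition \ref{cond:for singular compactness} holds for $\Delta=\{R_{0,2}(x,y)\}$ with $\theta(x,z) := R_{0,1}(x,z)$; Lemma \ref{lem:sufficient condition} then yields $\Delta$-singular compactness.

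To verify the Condition, I would fix $M \models T$, a finite $R_{0,2}$-type $r(x) = \{R_{0,2}(x,c_{j}) : j < k\}$ over $M$ (so, by consistency of $r$, each $c_{j} \in P_{2}^{M}$), and a finite set $A \subseteq P_{0}^{M}$ of realizations of $r$. The key observation is that it suffices to produce a single $b \in P_{1}^{M}$ with $R_{1,2}(b,c_{j})$ for all $j < k$ and $R_{0,1}(a,b)$ for all $a \in A$: the second batch of conditions says exactly $\theta(A,b)$, while $\oslash$ applied to each pair $(b,c_{j})$ gives $R_{0,1}(x,b) \vdash R_{0,2}(x,c_{j})$ and hence $\theta(x,b) \vdash r(x)$. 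To find such $b$, I would use that $T$ is model complete (being a model completion): it is enough to exhibit, in some model of $T^{\forall}$ extending $M$, an element playing the role of $b$, since that structure then embeds into a model $M^{*} \models T$ with $M \prec M^{*}$, and the existential statement witnessed in $M^{*}$ by $b$ reflects down to $M$. Concretely, I would let $N$ be the $L$-structure on $M \sqcup \{b\}$ extending $M$ with $b \in P_{1}^{N}$, $R_{0,1}^{N} = R_{0,1}^{M} \cup \{(a,b) : a \in A\}$, $R_{1,2}^{N} = R_{1,2}^{M} \cup \{(b,c_{j}) : j < k\}$, and all other relations unchanged.

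The one thing to check is $N \models T^{\forall}$, and this is routine: the containments $R_{0,1} \subseteq P_{0} \times P_{1}$ and $R_{1,2} \subseteq P_{1} \times P_{2}$ are preserved by construction, and the only new instances of $\oslash$ to inspect are those from the new pairs $(b,c_{j}) \in R_{1,2}^{N}$ --- but the only $x$ with $R_{0,1}^{N}(x,b)$ are the $a \in A$, each of which satisfies $R_{0,2}^{N}(a,c_{j}) = R_{0,2}^{M}(a,c_{j})$ because $a \models r$; and no old instance of $\oslash$ acquires a new witness, since $b \in P_{1}^{N}$ cannot occur on the left of $R_{0,1}$. Thus $N \models T^{\forall}$, completing the verification of Condition \ref{cond:for singular compactness} and hence, via Lemma \ref{lem:sufficient condition}, the proof. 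I do not expect any genuine obstacle here; the content is entirely in the bookkeeping around the axiom $\oslash$, which is exactly what makes $R_{0,1}$ a legitimate choice of $\theta$ for the single relevant formula $R_{0,2}$.
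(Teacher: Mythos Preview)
Your proposal is correct and follows essentially the same approach as the paper: both reduce to finding $b \in M$ with $R_{1,2}(b,c_{j})$ for all $j<k$ and $R_{0,1}(a,b)$ for all $a \in A$, then invoke $\oslash$ to get $\theta(x,b) \vdash r(x)$. The paper simply asserts such $b$ exists ``by the definition of $T$'', whereas you spell out the details via an explicit one-point extension $N \models T^{\forall}$ and model completeness; this is a faithful unpacking of what the paper leaves implicit.
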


\begin{proof}
We only need to show that Condition $\ref{cond:for singular compactness}$ holds. Suppose that $M\models T$ and $r$ is some finite $\Delta$-type.
Let $A\subseteq M$ be a finite set of realizations. Now the definition
of $T$, we may find some $b\in M$ with $R_{1,2}\left(b,c\right)$
whenever $R_{0,2}\left(x,c\right)\in r$ and $R_{0,1}\left(a,b\right)$
for all $a\in A$. This suffices. 
\end{proof}

\bibliographystyle{plain}
\bibliography{ms.bib}{}

\end{document}